\newtheorem{defn}{Definition}[section]
\newtheorem{lemma}[defn]{Lemma}
\newtheorem{prop}[defn]{Proposition}
\newtheorem{theorem}[defn]{Theorem}
\DeclareRobustCommand{\pder}[1]{%
  \@ifnextchar\bgroup{\@pder{#1}}{\@pder{}{#1}}}
\newcommand{\@pder}[2]{\frac{\partial#1}{\partial#2}}
\begin{document}

\title{Maximal displacement of simple random walk bridge on Galton-Watson trees}

\author{Josh Rosenberg\thanks{This work was supported by a Zuckerman STEM Postdoctoral Fellowship, as well as by ISF grant 1207/15, and ERC starting grant 676970 RANDGEOM.}}
\date{}
\maketitle

\begin{abstract}
We analyze simple random walk on a supercritical Galton-Watson tree, where the walk is conditioned to return to the root at time $2n$.  Specifically, we establish the asymptotic order (up to a constant factor) as $n\to\infty$, of the maximal displacement from the root.  Our results, which are shown to hold for almost surely every surviving tree $T$ (subject to some mild moment conditions), are broken up into two cases.  When the offspring distribution takes a value less than or equal to $1$ with positive probability, the maximal displacement of the bridge is shown to be on the order of $n^{1/3}$.  Conversely, when the offspring distribution has minimum possible value equal to at least $2$ (and is non-constant), the maximal displacement is shown to be of order less than $n$, but greater than $n^{\gamma}$ (for any $\gamma<1$).  Each of these cases is in contrast to the case of a regular tree, on which the bridge is known to be diffusive.  To obtain our results, we show how the walk tends to gravitate towards large clusters of vertices of minimal degree, where it then proceeds to spend most of its time.  The size and frequency of such clusters is generally dependent on the minimum possible value attainable by the offspring distribution, and it is this fact which largely accounts for the existence of the two regimes.
\end{abstract}

\section{Introduction.}

The simple random walk bridge of length $2n$ on a rooted graph refers to a simple random walk (begun at the root) that is conditioned to return to the root at time $2n$.  The study of such processes has, up to this point, mostly been restricted to graphs with certain nice symmetry properties (i.e. transitivity).  Specifically, several papers have examined this process on Cayley graphs, focusing in particular on the relationship between the underlying group structure, and certain properties of the bridge, such as its range (i.e. the number of distinct vertices visited) and the distance from the root at time $n$ (see \cite{BIK, AE}).  By contrast, studies of the bridge in a random environment have generally involved random networks, or other related models, for which the graph itself is fixed.  One such case was addressed by Gantert and Peterson in \cite{GP}, where they analyzed the behavior of the bridge on the graph $\mathbb{Z}$, equipped with i.i.d.~discrete random transition probabilities assigned to its vertices, and established the existence of sub-diffusive, diffusive, and super-diffusive regimes.

In this paper we analyze the maximal displacement of the bridge on Galton-Watson trees and establish that, much like with the model in \cite{GP}, the bridge process can be almost surely diffusive (the settled case of the regular tree), sub-diffusive, or ``nearly" ballistic, depending on the properties of the offspring distribution that generates the random tree.  In order to state these results, we first need to introduce some notation that we will use throughout the paper: Allow $Z$ to refer to an offspring distribution, while letting ${\sf GW}$ denote the measure on Galton-Watson trees associated with $Z$.  When referring to a fixed tree we'll denote it as $T$, whereas a random tree selected according to ${\sf GW}$ will be expressed as ${\bf T}$.  The root of any tree $T$ will be denoted as ${\bf 0}$, and for any vertex $v\in T$, $|v|$ will refer to the height of $v$.  Simple random walk starting at ${\bf 0}$ in $T$ will be denoted as $\{X_n\}$.  Finally, we refer to the measure that $\{X_n\}$ induces on the path space of $T$ as ${\sf SRW}_T$.

Having equipped ourselves with the above definitions, we can now state our main results in the form of the following theorem.

\begin{theorem}\label{theorem:mra3p} \emph{(main results)}.

\medskip
\noindent
(i) If ${\bf P}(Z\leq 1)>0$, and there exists $\delta>0$ such that ${\bf E}[Z^{1+\delta}]<\infty$, then $$\underset{A\to\infty}{\emph{lim}}\Bigg[\underset{n\to\infty}{\emph{liminf}}\ {\sf SRW}_{\bf{T}}\bigg(\frac{1}{A}n^{1/3}\leq\underset{j\leq 2n}{\emph{max}}|X_j|\leq An^{1/3}\ \Big|\ X_{2n}={\bf 0}\bigg)\Bigg]=1\ \ {\sf GW}-\emph{a.s.}$$conditioned on non-extinction of ${\bf T}$.

\medskip
\noindent
(ii) If ${\bf E}[Z]<\infty$ and ${\bf P}(Z\geq 2)=1$, then for every $\gamma<1$
$$\underset{n\to\infty}{\emph{lim}}{\sf SRW}_{\bf{T}}\bigg(\underset{j\leq 2n}{\emph{max}}|X_j|\geq n^{\gamma}\ \Big|\ X_{2n}={\bf 0}\bigg)=1\ \ {\sf GW}-\emph{a.s.},$$and for every $\beta<1$ $$\underset{n\to\infty}{\emph{lim}}{\sf SRW}_{\bf{T}}\bigg(\underset{j\leq 2n}{\emph{max}}|X_j|\geq\frac{n}{(\emph{log}\ n)^{\beta}}\ \Big|\ X_{2n}={\bf 0}\bigg)=0\ \ {\sf GW}-\emph{a.s.}$$
\end{theorem}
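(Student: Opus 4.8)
Write $q_L(2n):={\sf SRW}_{T}\big(X_{2n}={\bf 0},\ \max_{j\le 2n}|X_j|\le L\big)$, $p_{2n}(T):={\sf SRW}_T(X_{2n}={\bf 0})$, and $p_m(v,w):={\sf SRW}_T(X_m=w\mid X_0=v)$. Both conditional probabilities in the theorem are ratios:
$${\sf SRW}_T\big(\max_j|X_j|\le L\mid X_{2n}={\bf 0}\big)=\frac{q_L(2n)}{p_{2n}(T)},\qquad {\sf SRW}_T\big(\max_j|X_j|\ge L\mid X_{2n}={\bf 0}\big)=1-\frac{q_{L-1}(2n)}{p_{2n}(T)}.$$
So the problem reduces to three ${\sf GW}$--a.s.\ estimates, uniform for all large $n$ and all $L$ on the relevant scale: an upper bound on the \emph{confinement cost} $q_L(2n)$; an upper bound on the \emph{far--return cost} ${\sf SRW}_T(X_{2n}={\bf 0},\ \max_j|X_j|\ge L)$; and a matching lower bound on $p_{2n}(T)$. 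The scale $L$ that balances these is what produces the exponents $n^{1/3}$ and $n/\mathrm{polylog}$.

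\noindent\emph{Confinement.} Symmetrising the transition operator of the walk killed on leaving $B_L$ (the ball of radius $L$ about ${\bf 0}$) and using orthonormality of its eigenfunctions yields the clean bound $q_L(2n)\le\lambda_L(T)^{2n}$, where $\lambda_L(T)$ is its principal Dirichlet eigenvalue. The geometric heart of the proof is a ${\sf GW}$--a.s.\ two--sided estimate for $\rho_{\max}-\lambda_L(T)$, where $\rho_{\max}$ is the largest spectral radius attainable by the relevant class of trees: $\rho_{\max}=1$ when ${\bf P}(Z\le 1)>0$ (arbitrarily long runs of single--offspring vertices, or — when ${\bf P}(Z=0)>0$ — arbitrarily deep spine--like finite subtrees, always occur far from the root), while $\rho_{\max}=\tfrac{2\sqrt d}{d+1}$ with $d=\min\{k:{\bf P}(Z=k)>0\}$ when ${\bf P}(Z\ge 2)=1$ (the slowest local structure then being a $d$--ary subtree). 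In both regimes $\lambda_L(T)$ is governed by the slowest--mixing sub--region of $B_L$, namely its largest cluster of minimum--offspring vertices; the decisive difference is the \emph{graph diameter} of that cluster — a run of single--offspring vertices has length $\Theta(L)$ inside $B_L$, whereas a $d$--ary subtree of depth $h$ costs $\asymp{\bf P}(Z=d)^{d^{\,h}}$, so only $h\asymp\log L$ fits inside $B_L$. Hence $\rho_{\max}-\lambda_L(T)\asymp L^{-2}$ in case (i) and $\asymp(\log L)^{-2}$ in case (ii), giving $q_L(2n)\le e^{-\Theta(n/L^2)}$, respectively $q_L(2n)\le\rho_{\max}^{2n}e^{-\Theta(n/(\log L)^2)}$.

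\noindent\emph{Far--return cost and lower bound on $p_{2n}$.} Decompose the walk at the first hitting time $\tau_L$ of level $L$ and at the hitting time $\tau_{\bf 0}$ of ${\bf 0}$; using $p_m(v,{\bf 0})\le\rho_{\max}^m F(v,{\bf 0})$ with $F(v,{\bf 0})={\bf E}_v[\rho_{\max}^{-\tau_{\bf 0}}{\bf 1}_{\tau_{\bf 0}<\infty}]$ the $\rho_{\max}$--weighted descent probability, summing over $v$ at level $L$ gives
$${\sf SRW}_T\big(X_{2n}={\bf 0},\ \max_j|X_j|\ge L\big)\le\rho_{\max}^{2n}\Big(\max_{|v|=L}F(v,{\bf 0})\Big)\,{\bf E}_{\bf 0}\big[\rho_{\max}^{-\tau_L}{\bf 1}_{\tau_L<\infty}\big].$$
A ${\sf GW}$--a.s.\ analysis of products of one--step escape/return factors along root--directed paths then gives $\max_{|v|=L}F(v,{\bf 0})\le e^{-\kappa L}\mathrm{poly}(L)$ and ${\bf E}_{\bf 0}[\rho_{\max}^{-\tau_L}{\bf 1}_{\tau_L<\infty}]\le e^{\xi L}\mathrm{poly}(L)$, with the \emph{strict} inequality $\kappa>\xi$: the two rates coincide for the purely $d$--ary (or purely path--like) tree, and the gap opens because a typical portion of any root--directed path through $B_L$ is strictly fatter, resp.\ less reflective, than the extremal structure, with only $O(\log L)$ anomalous steps possible — this is exactly where ${\bf E}[Z^{1+\delta}]<\infty$, resp.\ ${\bf E}[Z]<\infty$, is used. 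Hence ${\sf SRW}_T(X_{2n}={\bf 0},\ \max_j|X_j|\ge L)\le\rho_{\max}^{2n}e^{-(\kappa-\xi)L}\mathrm{poly}(L)$; in case (i) ($\rho_{\max}=1$) the cruder bound $\le e^{-\kappa L}$, from ${\sf SRW}_T(\tau_{\bf 0}<\infty\mid X_0=v)\le e^{-\kappa L}$ and $\sum_k{\sf SRW}_T(\tau_L=k)\le 1$, already suffices. For the lower bound on $p_{2n}$ I would use the strategy ``travel out to the largest minimum--offspring cluster at distance $L^\ast$, remain essentially confined to $B_{L^\ast}$ for the middle $\approx 2n$ steps, return'': its probability is $\ge\lambda_{L^\ast}(T)^{2n}$ times a $\mathrm{poly}(L^\ast)$--exponential cost of reaching the cluster and returning to the root, i.e.\ $\rho_{\max}^{2n}e^{-\Theta(n/L^{\ast 2})-\Theta(L^\ast)}$ in case (i) and $\rho_{\max}^{2n}e^{-\Theta(n/(\log L^\ast)^2)-\Theta(L^\ast)}$ in case (ii); optimising gives $L^\ast\asymp n^{1/3}$ with $p_{2n}\ge e^{-\Theta(n^{1/3})}$, resp.\ $L^\ast\asymp n/(\log n)^3$ with $p_{2n}\ge\rho_{\max}^{2n}e^{-\Theta(n/(\log n)^2)}$. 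Combining: in case (i), $L=A^{-1}n^{1/3}$ makes $q_L(2n)/p_{2n}\le e^{-\Theta(A^2n^{1/3})+\Theta(n^{1/3})}\to 0$ for $A$ large, and $L=An^{1/3}$ makes the far--return cost over $p_{2n}$ at most $e^{-\Theta(An^{1/3})+\Theta(n^{1/3})}\to 0$ for $A$ large, so letting $n\to\infty$ then $A\to\infty$ gives (i). In case (ii), $L=n^\gamma$ ($\gamma<1$) makes $q_{n^\gamma}(2n)/p_{2n}\le e^{-\Theta(n/(\log n)^2)}\to 0$, because the two exponents come from the same $d$--ary computation and so their ratio is $\gamma^{-2}-1+o(1)>0$; this is the first display. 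And $L=n/(\log n)^\beta$ ($\beta<1$) makes the far--return cost $\le\rho_{\max}^{2n}e^{-\Theta(n/(\log n)^\beta)}\mathrm{poly}(n)$, which since $\beta<1<2$ is $o\big(\rho_{\max}^{2n}e^{-\Theta(n/(\log n)^2)}\big)=o(p_{2n})$; this is the second display.

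\noindent\emph{Main obstacle.} Everything becomes bookkeeping once the bracketed geometric and path estimates are made genuinely \emph{quenched}, i.e.\ valid ${\sf GW}$--a.s.\ for all large $L,n$ simultaneously. The hard part is exactly this: (i) Faber--Krahn / spectral--profile bounds relating $\lambda_L(T)$ to the fine local combinatorics of $B_L$ and robust to unbounded degrees, so that ``the slowest region in $B_L$ is its largest minimum--offspring cluster'' becomes a theorem; and (ii) a large--deviation analysis over the exponentially many root--directed paths reaching level $L$ showing that even the extremal path has descent rate strictly exceeding ascent rate ($\kappa>\xi$). Controlling how large a degree, and how long a reflective stretch, can appear inside $B_L$ is precisely what the moment hypotheses provide.
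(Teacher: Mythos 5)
Your proposal is a genuinely different route from the paper's. You reduce everything to a spectral estimate: bound the confinement cost $q_L(2n)$ by $\lambda_L(T)^{2n}$ where $\lambda_L$ is the principal Dirichlet eigenvalue of the ball $B_L$, then argue quenched that $\rho_{\max}-\lambda_L(T)\asymp L^{-2}$ (case $\mathbf{P}(Z\le 1)>0$) or $\asymp(\log L)^{-2}$ (case $\mathbf{P}(Z\ge 2)=1$), and pair this with a hitting-time decomposition for the far-return cost and an eigenvalue-based lower bound for $p_{2n}$. The scaling heuristics are all correct: $\rho_{\max}^2$ matches the paper's $M=4m/(m+1)^2$, the balance point $L^{*}\asymp n^{1/3}$ (resp. $n/(\log n)^{3}$) comes out right, and your observation that the first display of part (ii) hinges on the factor $\gamma^{-2}-1>0$ is exactly the mechanism in the paper's proof of \eqref{preqforpr}. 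The paper, however, sidesteps spectral theory entirely. For confinement it couples the tree walk with SRW on $\mathbb{Z}$ and invokes Mogulskii's small-deviation theorem (Lemma \ref{lemma:t3mogl34g}); for the lower bound on $p_{2n}$ it constructs an explicit trap event (Propositions \ref{prop:lbfrp}, \ref{prop:lbfrp2}); for the escape rate it proves that along any root-to-$v$ path a positive fraction of vertices have a uniform escape probability (Propositions \ref{prop:lbo1cir}, \ref{prop:lbonpolt}, via a change of measure $\mathsf{UNIF}_n=\mathsf{GW}_n\times\mu_n$ rather than eigenvalue analysis); and for part (ii) it changes measure to a biased walk $\mathsf{BRW}_T$ (Lemma \ref{lemma:compSRWm}) so that the factor $M^n$ is extracted cleanly and the remainder is controlled by reflecting the biased walk onto $\mathbb{Z}$.

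There are genuine gaps in the proposal, which you yourself flag as the ``main obstacle,'' but they are serious enough to note precisely. First, the quenched two-sided estimate $\rho_{\max}-\lambda_L(T)\asymp L^{-2}$ (or $(\log L)^{-2}$), uniformly in $L$ for $\mathsf{GW}$-a.e.\ $T$, is nontrivial on a random tree with unbounded degree; proving that the principal Dirichlet eigenvalue of $B_L$ is governed by its largest minimum-offspring cluster and is not dragged down by other geometry would require essentially the same path-level large-deviation apparatus (control of degree products, depth of pipes/$m$-ary subtrees, branching density) that the paper builds — the spectral packaging does not shorten that work. Second, your lower bound on $p_{2n}$ via ``$\ge\lambda_{L^*}^{2n}$ times a reaching cost'' implicitly requires a lower bound on the ground-state eigenfunction's overlap with $\delta_{\mathbf 0}$; the correct statement is $P_L^{2n}(\mathbf 0,\mathbf 0)\ge\lambda_L^{2n}\phi_1(\mathbf 0)^2\pi(\mathbf 0)$, and establishing $\phi_1(\mathbf 0)\ge e^{-O(L^*)}$ is itself a nontrivial eigenfunction-localization claim. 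The paper avoids this by constructing an explicit event (travel to a pipe, confine by Mogulskii, return), whose probability is lower-bounded directly. Third, the case $\mathbf{P}(Z=0)>0$ — treated in Section 3 of the paper and occupying a large share of the total effort, via the excursion decomposition into $T^i$ and finite bushes, Lemmas \ref{lemma:SRWrtft}, \ref{lemma:rficem}, and Proposition \ref{prop:3ineqforlb} — is not addressed separately in your proposal beyond the remark that ``spine-like finite subtrees'' can occur; the finite bushes genuinely complicate the confinement argument (the walk can spend most of its time in a bush hanging off a low level, giving confinement without any deep trap in $T^i$), and a naive eigenvalue bound on $B_L$ does not obviously capture this.
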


\bigskip
\noindent
The proof of part $(i)$ of the theorem is divided into two main parts.  The first part, which is presented as Theorem \ref{theorem:diffbndp1p} at the beginning of the next section, involves addressing the case where ${\bf P}(Z=0)=0$.  The strategy that we use here entails obtaining an almost sure asymptotic lower bound on ${\sf SRW}_{\bf T}(X_{2n}={\bf 0})$ that takes the form of a stretched exponential with exponent of order $n^{1/3}$.  This is achieved by estimating the probability of specific types of events that involve the walk spending nearly all of its time in long stretches of degree one vertices that are distance of order $n^{1/3}$ from the root (where the `degree' of a vertex $v$, sometimes denoted below as $\text{deg}(v)$, refers to its number of children).  From here we use some almost sure properties of Galton-Watson trees, along with a large deviations type argument, to obtain aymptotic upper bounds on the probability that a random walk, begun at any vertex on level $A n^{1/3}$ of ${\bf T}$, ever returns to the root (see Proposition \ref{prop:lbonpolt}).  Comparing these bounds as $A\to\infty$ to the lower bounds achieved for ${\sf SRW}_{\bf T}(X_{2n}={\bf 0})$ then allows us to complete the proof of the upper bound in $(i)$.  To address the lower bound (still looking at the ${\bf P}(Z=0)=0$ case) we define a coupling between simple random walk on a tree $T$, and simple random walk on $\mathbb{Z}$, in order to get an upper bound on the probability that the maximal displacement of the walk up to time $2n$ is less than $\frac{1}{A} n^{1/3}$.  We then compare this bound as $A\to \infty$ to our lower bound on ${\sf SRW}_{\bf T}(X_{2n}={\bf 0})$ in order to complete the proof.

In establishing $(i)$ for the ${\bf P}(Z=0)>0$ case, which is Theorem \ref{brbnd0c}, we find that the methods used in Section 2 to prove the upper bound can be adapted to this new case without a lot of additional work.  Conversely, the task of proving the lower bound presents a number of new challenges.  In fact, showing that the existence of finite subtrees does not tend to reduce the maximal displacement of the bridge by more than a constant factor, turns out to be the most difficult part of the paper.  The approach we use to accomplish this involves treating any surviving tree $T$ as an infinite tree with no leaves, to which we attach finite subtrees to the vertices.  Simple random walk on $T$ can then be thought of as a simple random walk on the infinite part of $T$, that makes excursions into the finite subtrees.  Since it is already established in Section 2 that the maximal displacement of simple random walk on an infinite tree with no leaves, up to time $2n$, is at least on the order of $n^{1/3}$, the main task in completing the proof of $(i)$ is to show that the time the simple random walk on ${\bf T}$ spends on its excursions does not excede the time it spends in the infinite part of ${\bf T}$ by more than a constant factor.  This is done by first using an inductive argument to obtain an upper bound on the exponential moments of certain hitting times for a random walk on a finite tree $T^f$ (see Lemma \ref{lemma:SRWrtft}), then establishing in Lemma \ref{lemma:rficem} the existence of exponential moments for a random variable $r$ related to the size of the random finite tree ${\bf T}^f$ (meaning ${\bf T}$ conditioned to go extinct), and then finally using these results in conjunction with an argument that involves exploiting properties of the annealed distribution ${\sf GW}\times{\sf SRW}_{{\bf T}}$, in order to achieve several asymptotic bounds  on event probabilities (see Proposition \ref{prop:3ineqforlb}), which are then used to obtain the desired result for the quenched case.

In addressing $(ii)$, which appears as Theorem \ref{theorem:diffbound} in Section 4, the proof of the lower bound largely consists of adapting a method that Gantert and Peterson used in \cite{GP} when analyzing the bridge for random walk on $\mathbb{Z}$ (equipped with random transition probabilities).  Our adapted version of this method, when applied to Galton-Watson trees, involves defining a new measure ${\sf BRW}_T$ associated with a particular biased random walk on $T$ denoted as $\{X_j\}^{\beta}$, that is constructed in such a way that ${\sf BRW}_T$ satisfies a certain set of inequalities involving ${\sf SRW}_T$ (see Lemma \ref{lemma:compSRWm}).  We are able to then use an approach that entails estimating the sizes and frequencies of $m$-ary subtrees in a random tree ${\bf T}$, as well as the duration of time the biased random walk tends to spend inside of these subtrees, in order to obtain almost sure asymptotic estimates for ${\sf BRW}_{{\bf T}}\Big(\underset{j\leq 2n}{\text{max}}|X_j|\leq n^{\gamma},\ X_{2n}={\bf 0}\Big)$.  Combining this with the inequalities in Lemma \ref{lemma:compSRWm}, we can approximate the value of ${\sf SRW}_{{\bf T}}\Big(\underset{j\leq 2n}{\text{max}}|X_j|\leq n^{\gamma},\ X_{2n}={\bf 0}\Big)$, allowing us to establish the lower bound in $(ii)$.  We then complete the proof by putting these estimates together with direct estimates of ${\sf SRW}_{{\bf T}}\Big(\underset{j\leq 2n}{\text{max}}|X_j|\leq \frac{n}{(\text{log}\ n)^{\beta}},\ X_{2n}={\bf 0}\Big)$, to obtain the corresponding upper bound.

\section{Case 1(a): ${\bf P}(Z=1)>0$, ${\bf P}(Z=0)=0$}

\begin{theorem}\label{theorem:diffbndp1p}
If the offspring distribution $Z$ satisfies ${\bf P}(Z=0)=0$, $0<{\bf P}(Z=1)<1$, and there exists $\delta>0$ such that ${\bf E}[Z^{1+\delta}]<\infty$, then it follows that\begin{equation}\label{limliminftsin}\underset{A\to\infty}{\emph{lim}}\Bigg[\underset{n\to\infty}{\emph{liminf}}\ {\sf SRW}_{\bf{T}}\bigg(\frac{1}{A}n^{1/3}\leq\underset{j\leq 2n}{\emph{max}}|X_j|\leq An^{1/3}\ \Big|\ X_{2n}={\bf 0}\bigg)\Bigg]=1\ \ {\sf GW}-\emph{a.s.}\end{equation}
\end{theorem}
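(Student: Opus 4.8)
The plan is to split the statement into an upper bound and a lower bound on the maximal displacement, each holding conditional on $X_{2n}=\mathbf{0}$. Since ${\sf SRW}_{\bf T}(E\mid X_{2n}=\mathbf{0}) = {\sf SRW}_{\bf T}(E,\,X_{2n}=\mathbf{0})/{\sf SRW}_{\bf T}(X_{2n}=\mathbf{0})$, the denominator needs a good lower bound and the numerators of the two "bad" events need good upper bounds. So the first and central step is:

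Step 1 (return-probability lower bound). Produce a constant $c>0$ such that, ${\sf GW}$-a.s. on non-extinction, ${\sf SRW}_{\bf T}(X_{2n}=\mathbf{0}) \geq \exp(-c\,n^{1/3})$ for all large $n$. The mechanism is that in a Galton–Watson tree with $\mathbf{P}(Z=1)>0$, near the root there a.s.\ exist paths of consecutive degree-one vertices of length $\asymp n^{1/3}$ (the length $\ell$ such that $\mathbf{P}(Z=1)^{\ell}$ is not too small relative to the Galton–Watson growth, giving $\ell$ of order $\log n$ — but what matters is a path of length of order $n^{1/3}$ can be found among the $\exp(\Theta(n^{1/3}))$-many vertices within that distance, by a first-moment/second-moment or a direct a.s.\ argument). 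On such a path the walk behaves like SRW on a segment of $\mathbb Z$ of length $L\asymp n^{1/3}$, and the probability that SRW on a segment of length $L$ started at one end returns to that end at time exactly $2n$ is, by standard reflection/spectral estimates, of order $\exp(-\Theta(n/L^2)) = \exp(-\Theta(n^{1/3}))$ when $L\asymp n^{1/3}$; one forces the walk to walk out to the far cluster, oscillate there, and walk back. I expect Step 1 to be the main obstacle: locating a sufficiently long monochromatic (degree-one) cluster at the right distance and controlling the event that the walk commits to it requires care, because the walk can leak into the rest of the tree, and the a.s.\ statement needs a Borel–Cantelli argument over $n$.

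Step 2 (upper bound on the displacement). Invoke Proposition 1.? (the one the introduction calls Proposition~\ref{prop:lbonpolt}): a random walk started at any vertex on level $An^{1/3}$ of $\mathbf T$ returns to the root with probability at most $\exp(-c' A n^{1/3})$ (or similar), by a large-deviations argument using a.s.\ properties of ${\bf T}$ (the tree is "uniformly transient at large scales" because $Z\geq 1$ with positive mass on $Z\geq 2$ gives exponential growth). Then
\[
{\sf SRW}_{\bf T}\Big(\max_{j\le 2n}|X_j|\ge An^{1/3},\ X_{2n}=\mathbf 0\Big)
\le \sum_{k\ge An^{1/3}} {\sf SRW}_{\bf T}(\text{hit level }k)\cdot\!\!\max_{|v|=k}{\sf SRW}_v(\text{return to }\mathbf 0)
\le \exp(-c'A\,n^{1/3}),
\]
and dividing by the Step 1 bound $\exp(-c\,n^{1/3})$ gives a ratio $\le \exp(-(c'A-c)n^{1/3})\to 0$ as $n\to\infty$, uniformly once $A>c/c'$; hence the liminf of the conditional probability of $\{\max|X_j|>An^{1/3}\}$ is $0$, and it $\to 0$ as $A\to\infty$.

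Step 3 (lower bound on the displacement). Here the plan is the coupling of SRW on $T$ with SRW on $\mathbb Z$ announced in the introduction: project the walk onto its height process; because every vertex has degree $\ge 1$, the height process is dominated below (in a suitable sense) by, or can be compared to, a lazy nearest-neighbour walk on $\mathbb Z$ reflected at $0$, whose maximum up to time $2n$ is of order $n^{1/2}\gg n^{1/3}$, so the probability that $\max_{j\le 2n}|X_j|<\frac1A n^{1/3}$ is at most $\exp(-\Theta(A^{-2} \cdot n/n^{2/3})) = \exp(-\Theta(A^{-2}n^{1/3}))$ by a confinement estimate (the walk staying in a box of size $\frac1A n^{1/3}$ for time $2n$ costs a spectral-gap factor $\exp(-\Theta(n \cdot A^2/n^{2/3}))$ — and one must be careful about the direction of the comparison, since SRW on a tree with branching is pushed outward, which only helps). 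Multiplying by the unconditioned weight and dividing again by $\exp(-c\,n^{1/3})$ from Step 1, the ratio is $\le \exp(-(c''A^{-2}-c)n^{1/3})$, which $\to 0$ as $n\to\infty$ once $A$ is large; hence the liminf of the conditional probability of $\{\max|X_j|<\frac1A n^{1/3}\}$ is $0$, so it tends to $0$ as $A\to\infty$.

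Step 4 (assembling and the a.s.\ statement). Combining Steps 2 and 3,
\[
\liminf_{n\to\infty}{\sf SRW}_{\bf T}\Big(\tfrac1A n^{1/3}\le \max_{j\le 2n}|X_j|\le An^{1/3}\ \Big|\ X_{2n}=\mathbf 0\Big)\ge 1 - o_A(1),
\]
and letting $A\to\infty$ gives \eqref{limliminftsin}. Throughout, each estimate is claimed to hold ${\sf GW}$-a.s.\ on non-extinction; the a.s.\ upgrades come from the a.s.\ structural facts about $\mathbf T$ used in Step 1 (existence of long degree-one clusters) and in Proposition~\ref{prop:lbonpolt} (the large-deviations input), together with Borel–Cantelli over the subsequence of $n$'s. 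The delicate point to handle with care is making the "the walk commits to one long cluster" event in Step 1 quantitative enough that its probability beats $\exp(-c n^{1/3})$ for the \emph{right} constant, while simultaneously ensuring such a cluster exists at distance $O(n^{1/3})$ for all large $n$ almost surely.
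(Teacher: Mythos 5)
Your proposal follows the paper's own architecture closely: a stretched-exponential lower bound $e^{-\Theta(n^{1/3})}$ on ${\sf SRW}_{\bf T}(X_{2n}={\bf 0})$ obtained by forcing the walk into a degree-one trap of length $\Theta(n^{1/3})$ at distance $O(n^{1/3})$ from the root (this is the content of Lemma~\ref{lemma:maxdepthline} and Proposition~\ref{prop:lbfrp}), an exponential escape estimate at level $An^{1/3}$ for the upper bound on displacement (Proposition~\ref{prop:lbonpolt}), and a monotone coupling with SRW on $\mathbb Z$ together with Mogul'skii's small-deviation estimate (Lemma~\ref{lemma:t3mogl34g}) for the lower bound on displacement, exactly as the paper does. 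One slip to fix in your Step 3: the confinement cost should read $\exp(-\Theta(A^{2}n^{1/3}))$, not $\exp(-\Theta(A^{-2}n^{1/3}))$ --- you write the correct scaling $\exp(-\Theta(nA^{2}/n^{2/3}))$ in the parenthetical but then revert to $A^{-2}$ in the displayed bound and in the final ratio, which should be $\exp(-(c''A^{2}-c)n^{1/3})$ so that increasing $A$ makes the exponent negative.
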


\bigskip
\noindent
The first step in proving \ref{theorem:diffbndp1p} will consist of achieving a lower bound on the value of ${\sf SRW}_T(X_{2n}=\bf{0})$.  To do this, we'll analyze cases in which the random walk enters a long stretch of the tree in which vertices only have one child (such stretches will be referred to as ``traps"), stays there for almost the entire duration of the walk, and then returns to the root at time $2n$.  Our initial task will therefore be to come up with asymptotic estimates for the lengths of these traps.

Before presenting the first lemma, we provide the following necessary definitions: First, for any tree $T$ and any vertex $v\in T$, let $d_T(v)$ represent the maximum length of a series of degree-1 vertices starting at $v$ and going away from the root, i.e. $d_T(v)$ is the minimum number of steps, starting at $v$, that one must take away from the root before hitting a vertex with at least two children.  Now for every $n\geq 0$, we define $D_{n,k}(T):=\text{max}\{d_T(v):|v|=n,\ \text{deg}(u)\leq k\ \forall\ u<v\}$ (where the expression $u<v$ indicates that the vertex $u$ is an ancestor of the vertex $v$).

\medskip
\begin{lemma}\label{lemma:maxdepthline}
For any tree $T$, let $T^{(k)}$ represent the tree we obtain if we remove all descendants (starting with children) of vertices $v$ for which $\text{deg}(v)>k$, and then let $\mathscr{A}_k$ represent the set of all $T$ (without leaves) for which $T^{(k)}$ survives to infinity.  Additionally, let $\sigma:=\frac{\emph{log}(1/\mu)}{\emph{log}\ \rho}$, where $\mu$ and $\rho$ represent ${\bf E}[Z]$ and ${\bf P}(Z=1)$ respectively.  Then for every $\epsilon>0$, there exists a value $N_{\epsilon}$ such that for $k\geq N_{\epsilon}$ we have\begin{equation}\label{ratiodnkpol}\underset{n\to\infty}{\emph{liminf}}\frac{D_{n,k}({\bf T})}{\sigma n}\geq 1-\epsilon\ \ {\sf GW}-\emph{a.s}.\end{equation} conditioned on $\mathscr{A}_k$.
\end{lemma}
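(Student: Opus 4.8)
The plan is to identify the level-$n$ vertices entering the maximum that defines $D_{n,k}(T)$ with the $n$-th generation of the truncated tree $T^{(k)}$, and then to combine a first-moment/Borel--Cantelli estimate over the (conditionally i.i.d.) subtrees hanging from that generation with Kesten--Stigum growth bounds for $T^{(k)}$.

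First I would record the reduction to $T^{(k)}$. A vertex $v$ with $|v|=n$ satisfies $\deg(u)\le k$ for all $u<v$ exactly when none of its strict ancestors was deleted in forming $T^{(k)}$, so the set $\{v:|v|=n,\ \deg(u)\le k\ \forall\, u<v\}$ is precisely the $n$-th level of $T^{(k)}$; write $M_n$ for its size. Moreover $T^{(k)}$ is itself a Galton--Watson tree, with offspring law $Z^{(k)}\stackrel{d}{=}Z\mathbf{1}_{\{Z\le k\}}$ and mean $\mu_k:=\E[Z\mathbf{1}_{\{Z\le k\}}]$, and $\mu_k$ increases to $\mu=\E[Z]$; here $\mu>1$ because $Z\ge1$ a.s.\ (as $\Prob(Z=0)=0$) while $\Prob(Z\ge2)>0$ (as $\Prob(Z=1)<1$). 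Consequently, for all large $k$ the tree $T^{(k)}$ is supercritical, $\mathscr{A}_k$ has positive probability, and --- since $Z^{(k)}$ is bounded, so the $Z\log Z$ condition is automatic --- Kesten--Stigum gives $M_n/\mu_k^{\,n}\to W_k$ a.s.\ with $\{W_k>0\}=\mathscr{A}_k$ up to a null set. In particular, for any fixed $\eta>0$, a.s.\ on $\mathscr{A}_k$ one has $M_n\ge\mu_k^{(1-\eta)n}$ for all large $n$.

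Next I would estimate the trap lengths conditionally on $\mathcal{F}_n$, the first $n$ levels of $\mathbf{T}$. Given $\mathcal{F}_n$, the subtrees of $\mathbf{T}$ rooted at the level-$n$ vertices are i.i.d.\ ${\sf GW}$ trees, and for such a subtree rooted at $v$ the event $\{d_{\mathbf{T}}(v)\ge\ell\}$ says exactly that the first $\ell$ vertices of the path leaving the root through $v$ each have a single child; since $\Prob(Z=0)=0$ no such path terminates at a leaf, and the branching property makes this event have probability $\rho^{\ell}$. As $M_n$ and the set of level-$n$ vertices lying in $T^{(k)}$ are $\mathcal{F}_n$-measurable, this gives
\[
\Prob\big(D_{n,k}(\mathbf{T})<\ell\,\big|\,\mathcal{F}_n\big)=(1-\rho^{\ell})^{M_n}\le\exp\!\big(-M_n\rho^{\ell}\big).
\]
Then I would take $\ell_n:=\lceil(1-\epsilon)\sigma n\rceil$ and use the defining relation $\sigma\log\rho=\log(1/\mu)=-\log\mu$: on the $\mathcal{F}_n$-measurable event $E_n:=\{M_n\ge\mu_k^{(1-\epsilon/2)n}\}$ one obtains, for large $n$,
\[
M_n\rho^{\ell_n}\ \ge\ \rho\,\mu_k^{(1-\epsilon/2)n}\rho^{(1-\epsilon)\sigma n}\ =\ \rho\exp\!\Big(n\big[(1-\tfrac{\epsilon}{2})\log\mu_k-(1-\epsilon)\log\mu\big]\Big).
\]
Since $\mu_k\uparrow\mu$, I can pick $N_\epsilon$ so that for $k\ge N_\epsilon$ the bracket is a strictly positive constant $c=c(k,\epsilon)$; then $\Prob(\{D_{n,k}<\ell_n\}\cap E_n)=\E[\mathbf{1}_{E_n}\Prob(D_{n,k}<\ell_n\mid\mathcal{F}_n)]\le\exp(-\rho e^{cn})$ is summable, so Borel--Cantelli gives that $\{D_{n,k}<\ell_n\}\cap E_n$ occurs only finitely often a.s. Combined with the observation (taking $\eta=\epsilon/2$) that $E_n$ holds for all large $n$ a.s.\ on $\mathscr{A}_k$, this yields $D_{n,k}(\mathbf{T})\ge\ell_n\ge(1-\epsilon)\sigma n$ for all large $n$, a.s.\ on $\mathscr{A}_k$, which is \eqref{ratiodnkpol}.

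The step I expect to be the main obstacle --- really the only delicate point --- is the balancing of exponents in the last display: the argument succeeds only because the growth rate $\mu_k$ of the truncated tree can be pushed arbitrarily close to $\mu$ by enlarging $k$, and because $\sigma$ is precisely the exponent for which $\mu^{\,n}\rho^{\sigma n}=1$; this is what forces $N_\epsilon$ to depend on $\epsilon$ and is why the statement is restricted to large $k$. The remaining ingredients --- the identification with $T^{(k)}$, the conditional independence of the traps, the Kesten--Stigum input, and Borel--Cantelli --- are routine.
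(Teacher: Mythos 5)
Your proof is correct and follows essentially the same route as the paper: a conditional first-moment bound $(1-\rho^\ell)^{\#\text{vertices}}$ on the trap lengths hanging from level $n$, a Kesten--Stigum lower bound on the number of eligible level-$n$ vertices, Borel--Cantelli, and then $k\to\infty$ via $\mu_k\uparrow\mu$. The only (cosmetic) difference is that you work under the unconditioned measure ${\sf GW}$, condition on $\mathcal{F}_n$, and count all level-$n$ vertices of $T^{(k)}$, whereas the paper works under ${\sf GW}^{(k)}={\sf GW}(\cdot\mid\mathscr{A}_k)$ with the reduced tree $\tilde T^{(k)}$ and conditions on $Z^{(k)}_n$; the two bookkeepings yield the same bound and the same dependence of $N_\epsilon$ on $\epsilon$.
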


\begin{proof}
We start by defining $\tilde{T}^{(k)}$ to be the tree we obtain by keeping only those vertices in $T^{(k)}$ that are the roots of surviving subtrees (hence, if $T^{(k)}$ does not survive then $\tilde{T}^{(k)}$ is empty).  Next, we let $D^*_{n,k}=\text{max}\{d_T(v):|v|=n,\ v\in \tilde{T}^{(k)}\}$, and note that $D^*_{n,k}\leq D_{n,k}$.  Now defining $Z^{(k)}_n(T)$ to be the size of the $n$th generation of $\tilde{T}^{(k)}$, letting ${\sf GW}^{(k)}:={\sf GW}(\cdot|\mathscr{A}_k)$ for any $k$ satisfying ${\bf E}[Z\cdot 1_{Z\leq k}]>1$, and noting that ${\sf GW}^{(k)}(\text{deg}({\bf 0})=1)=\rho$, we find that for any $\ell>0$, \begin{equation}\label{bndingpl1c}{\sf GW}^{(k)}\big(D^*_{n,k}<\ell\ |\ Z^{(k)}_n\big)=\Big(1-\rho^{\ell}\Big)^{Z^{(k)}_n}.\end{equation}Next we observe that if we have any $r,c$ such that $r>1$ and $0<c<\frac{\text{log}(1/r)}{\text{log}\ \rho}$, then it follows from \eqref{bndingpl1c} that ${\sf GW}^{(k)}(D_{n,k}^*<cn|Z_n^{(k)}>r^n)$ is summable, which by the Borel-Cantelli lemma implies \begin{equation}\label{dnkslrniop}{\sf GW}^{(k)}\Big(\{D_{n,k}^*<cn\}\cap\{Z_n^{(k)}>r^n\}\ \text{i.o.}\Big)=0.\end{equation}Now letting $\tilde{Z}_k$ represent the offspring distribution associated with $\tilde{{\bf T}}^{(k)}$ (conditioned on the event $\mathscr{A}_k$), we denote $$\tilde{\mu}_k:={\bf E}[\tilde{Z}_k]=\sum_{j=1}^k {\bf P}(Z=k)\cdot k.$$Noting that by the Kesten-Stigum Theorem $\frac{Z_n^{(k)}}{\tilde{\mu}_k^n}$ converges to a positive value ${\sf GW}^{(k)}$-a.s., we see that for any $r<\tilde{\mu}_k$, ${\sf GW}^{(k)}(Z_n^{(k)}\leq r^n\ \text{i.o.})=0$.  Combining this with \eqref{dnkslrniop}, along with the fact that $D_{n,k}\geq D_{n,k}^{*}$, we can conclude that \begin{equation}\label{dnknratigwas}\underset{n\to\infty}{\text{liminf}}\ \frac{D_{n,k}}{\sigma n}\geq\frac{\text{log}(1/\tilde{\mu}_k)}{\text{log}(1/\mu)}\ \ {\sf GW}^{(k)}-\text{a.s.}\end{equation}Since the expression on the right side of the above inequality goes to $1$ as $k\to\infty$, the proof is now complete.
\end{proof}

\medskip
\noindent
$Remark\ 1.$ A few different subtle variants of the above argument using independent subtrees, bounds on the growth rate of ${\bf T}$, and the Borel-Cantelli Lemma, will be employed at several other points throughout the paper in order to achieve similar results.  Instances of this include the proofs of Lemmas \ref{lemma:mdmc0c} and \ref{lemma:maxdepthreg}, as well as the beginning of the proof of Proposition \ref{prop:3ineqforlb}.

\bigskip
The next proposition that we will prove gives an asymptotic lower bound on the value of ${\sf SRW}_T(X_{2n}={\bf 0})$.  As noted in the introduction, to achieve this bound we estimate the probability that the walk begins by traveling to a vertex $v_n$ that belongs to a pipe with length of order $n^{1/3}$, then remains in this pipe for nearly the entire duration of the first $2n$ steps, and then travels straight back to the root, where it lands at time $2n$.  Approximating the probability of this sort of event will require the use of a result concerning the small deviation asymptotics for simple random walk on $\mathbb{Z}$.  This result, which appears as Theorem 3 in \cite{Mog} (and as Lemma 3.4 in \cite{GP}), will now be stated as a lemma.  Note that in the statement of the lemma, and throughout the rest of the paper, ${\sf SRW}_{\mathbb{Z}}$ will refer to the measure associated with simple random walk on $\mathbb{Z}$.

\medskip
\begin{lemma}\label{lemma:t3mogl34g}
Let $\underset{n\to\infty}{\emph{lim}}x(n)=\infty$ and $x(n)=o\big(\sqrt{n}\big)$.  Then, $$\underset{n\to\infty}{\emph{lim}}\frac{x(n)^2}{n}\emph{log}\bigg[{\sf SRW}_{\mathbb{Z}}\Big(\underset{j\leq n}{\emph{max}}|X_j|\leq x(n)\Big)\bigg]=-\frac{\pi^2}{8}.$$
\end{lemma}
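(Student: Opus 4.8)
The plan is to reduce this statement about simple random walk on $\mathbb{Z}$ to the classical small-deviation (or ``small ball'') asymptotics for Brownian motion, and alternatively — if one wants a self-contained argument — to derive it directly via a spectral/eigenvalue analysis of the transition operator on a finite interval. Since the result is quoted from \cite{Mog} and \cite{GP}, I would keep the proof at the level of a sketch indicating why the constant $-\pi^2/8$ is the right one.

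First I would recall the Brownian analogue: if $\{B_t\}_{t\geq0}$ is standard Brownian motion, then $\Prob\big(\max_{t\leq 1}|B_t|\leq \varepsilon\big) = \exp\big(-\tfrac{\pi^2}{8\varepsilon^2}(1+o(1))\big)$ as $\varepsilon\to0$. This is a standard computation: the probability that $|B_t|$ stays in $(-\varepsilon,\varepsilon)$ on $[0,1]$ equals the mass at time $1$ of the sub-Markovian semigroup generated by $\tfrac12\Delta$ with Dirichlet boundary conditions on $(-\varepsilon,\varepsilon)$, whose principal eigenvalue is $\tfrac12(\pi/2\varepsilon)^2 = \pi^2/8\varepsilon^2$, so the survival probability decays like $e^{-\pi^2 t/8\varepsilon^2}$ times a constant, and at $t=1$ this gives the stated asymptotic after taking logarithms. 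Then I would scale: writing $\varepsilon = x(n)/\sqrt{n}$, the event $\{\max_{j\leq n}|X_j|\leq x(n)\}$ for the random walk corresponds, under Brownian scaling $X_j \approx \sqrt{n}\,B_{j/n}$, to $\{\max_{t\leq 1}|B_t|\leq \varepsilon\}$, and since $x(n)\to\infty$ with $x(n)=o(\sqrt n)$ we have $\varepsilon\to 0$, so the Brownian small-ball estimate yields $\log {\sf SRW}_{\mathbb{Z}}(\max_{j\leq n}|X_j|\leq x(n)) = -\tfrac{\pi^2}{8}\cdot\tfrac{n}{x(n)^2}(1+o(1))$, which is exactly the claim after multiplying by $x(n)^2/n$.

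To make the reduction rigorous rather than heuristic, the cleanest route is a direct eigenvalue argument for the walk itself: let $m = \lfloor x(n)\rfloor$ and consider the $(2m-1)\times(2m-1)$ sub-stochastic matrix $Q$ obtained by restricting the SRW transition matrix to the interval $\{-m+1,\dots,m-1\}$ (killing the walk when it exits). Then ${\sf SRW}_{\mathbb{Z}}(\max_{j\leq n}|X_j|\leq x(n))$ is comparable, up to polynomial-in-$m$ factors coming from the initial and final states, to $\lambda_Q^n$, where $\lambda_Q$ is the Perron eigenvalue of $Q$. The eigenvectors of $Q$ are the discrete sines $v_k = \sin(k\pi j/(2m))$, with eigenvalues $\cos(k\pi/(2m))$; the largest is $\lambda_Q = \cos(\pi/(2m)) = 1 - \tfrac{\pi^2}{8m^2} + O(m^{-4})$. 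Hence $n\log\lambda_Q = -\tfrac{\pi^2 n}{8 m^2}(1+o(1))$, the polynomial correction factors contribute $O(\log m) = o(n/m^2)$ to the log (using $m = o(\sqrt n)$, so $n/m^2 \to\infty$), and multiplying through by $m^2/n$ (noting $m^2/x(n)^2 \to 1$) gives the limit $-\pi^2/8$.

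The main obstacle — and the only place where care is genuinely needed — is controlling the subexponential prefactors uniformly as $n\to\infty$ with $m=m(n)$ growing. One must check that the ratio between ${\sf SRW}_{\mathbb{Z}}(\max_{j\leq n}|X_j|\leq x(n))$ and $\lambda_Q^n$ stays between two quantities of the form $e^{o(n/m^2)}$: the lower bound follows by projecting the initial distribution onto the Perron eigenvector and using that this projection is bounded below by a constant times $1/\sqrt m$ (since the Perron eigenvector is a discretized sine of bounded $\ell^2$-normalized amplitude), and the upper bound follows since the indicator of the allowed interval is dominated by a constant times $\sqrt m$ times the Perron eigenvector plus corrections from the other eigenvalues, all of which are strictly smaller. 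Both corrections are only polynomial in $m$, hence negligible on the scale $n/m^2$, so they vanish after the normalization — which is precisely why the statement is about the logarithm divided by $n/x(n)^2$ rather than a sharp asymptotic for the probability itself. Since this is a cited result, I would simply refer to \cite{Mog} (Theorem 3) for the details and state that the value of the constant is explained by the principal Dirichlet eigenvalue computation above.
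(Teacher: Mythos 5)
The paper does not prove this lemma---it is quoted from Mogulskii \cite{Mog} (Theorem 3) and Gantert--Peterson \cite{GP} (Lemma 3.4) and used as a black box, so there is no in-paper argument to compare against. Your sketch supplies the standard self-contained proof: the Brownian small-ball heuristic correctly identifies the constant as the principal Dirichlet eigenvalue $\pi^2/(8\varepsilon^2)$ of $\tfrac12\Delta$ on $(-\varepsilon,\varepsilon)$, and the discrete spectral argument via the Perron eigenvalue $\lambda_Q$ of the killed transition matrix makes it rigorous. Your handling of the two non-trivial points---that the polynomial prefactors (Perron-eigenvector overlaps, contributions of subleading eigenvalues) are $e^{O(\log m)}$ and hence negligible against $n/m^2\to\infty$, which is precisely where $x(n)=o(\sqrt n)$ is used, and that the cosine expansion $\cos(\pi/(2m))=1-\tfrac{\pi^2}{8m^2}+O(m^{-4})$ requires $m\to\infty$, which is where $x(n)\to\infty$ is used---is exactly the content of Mogulskii's theorem and is adequate at the level of a sketch.

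One small indexing nit: the event $\max_{j\le n}|X_j|\le x(n)$ confines the walk to $\{-m,\dots,m\}$ with $m=\lfloor x(n)\rfloor$, i.e.\ $2m+1$ sites killed at $\pm(m+1)$, with top eigenvalue $\cos\!\big(\pi/(2m+2)\big)$, whereas you restricted to $\{-m+1,\dots,m-1\}$ ($2m-1$ sites, top eigenvalue $\cos(\pi/(2m))$). Since both equal $1-\tfrac{\pi^2}{8m^2}(1+o(1))$, the limit is unaffected, but the interval you chose actually gives a lower bound (it is strictly contained in the true allowed set), so if you were to write the two-sided estimate out in full you would want to sandwich between $\{-m,\dots,m\}$ and $\{-m-1,\dots,m+1\}$, or simply fix the off-by-one.
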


\bigskip
\begin{prop}\label{prop:lbfrp}
Let $k$ satisfy ${\bf E}[Z\cdot 1_{Z\leq k}]>1$.  Then there exists $C_k>0$ such that for ${\sf GW}^{(k)}-\emph{a.s.}$ every tree $T$,$$\underset{n\to\infty}{\emph{liminf}}\ \frac{{\sf SRW}_T(X_{2n}={\bf 0})}{e^{-C_k n^{1/3}}}=\infty.$$
\end{prop}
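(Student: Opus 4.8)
The plan is to realize the event $\{X_{2n} = \mathbf{0}\}$ by constructing an explicit "cheap" trajectory of the walk and lower-bounding its probability. Fix $k$ large enough that ${\bf E}[Z\cdot 1_{Z\leq k}]>1$; we may as well also take $k \geq N_\epsilon$ from Lemma \ref{lemma:maxdepthline} (for some fixed small $\epsilon$), and we work ${\sf GW}^{(k)}$-a.s., so that $T$ lies in $\mathscr{A}_k$. Set $m = \lfloor c\, n^{1/3}\rfloor$ for a constant $c>0$ to be chosen. By Lemma \ref{lemma:maxdepthline}, for all large $n$ there is a vertex $w_n$ with $|w_n| = m$, all ancestors of $w_n$ of degree $\leq k$, and a "pipe" (a maximal path of degree-1 vertices) of length at least $\sigma m / 2 \geq c' n^{1/3}$ emanating from $w_n$ away from the root. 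Call the far end of this pipe $v_n$, so $|v_n| = m + \ell$ with $\ell \geq c' n^{1/3}$, and the segment from $w_n$ to $v_n$ consists entirely of degree-1 vertices.

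The trajectory I would analyze: the walk first travels from $\mathbf{0}$ to $w_n$ in exactly $m$ steps (always stepping "down" along the geodesic); this has probability at least $\prod_{u < w_n}\frac{1}{\deg(u)+1} \geq (k+1)^{-m}$, which is $\exp(-O(n^{1/3}))$ since $m = O(n^{1/3})$. Then, from $w_n$, the walk enters the pipe and behaves for the next $2n - 2m$ steps like simple random walk on a segment of $\mathbb{Z}$ of length $\ell$ — more precisely, while it stays strictly inside the pipe (between $w_n$ and $v_n$) the SRW on $T$ is literally SRW on $\{0,1,\dots,\ell\}$, since every interior pipe vertex has exactly one child and one parent. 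We want the walk to remain in $\{1,\dots,\ell-1\}$ (never backing out past $w_n$ and never reaching $v_n$) for all $2n - 2m$ of these steps and to return to position $0$ (i.e. to $w_n$) at the end. Then finally it travels from $w_n$ straight back up to $\mathbf{0}$ in $m$ more steps, again at cost $\geq (k+1)^{-m}$. The total time is $m + (2n - 2m) + m = 2n$, as required, and $\max_j |X_j| \leq m + \ell = O(n^{1/3})$ throughout.

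The remaining estimate is the probability that SRW on $\{0,\dots,\ell\}$, started at distance $1$ from an endpoint, stays in $\{1,\dots,\ell-1\}$ for $2n-2m$ steps and is back at $1$... more cleanly: started at $0$, takes one forced-type step — actually it is cleanest to start the pipe-excursion at $w_n$, require the first step go into the pipe (probability $\geq (k+1)^{-1}$), then demand SRW on $\mathbb{Z}$ restricted to stay in $(0,\ell)$ for the middle stretch and return to $1$, then one step back to $w_n$. By reflecting or by a standard parity/first-passage decomposition, the probability of staying within a window of width $\ell \asymp n^{1/3}$ for time $\asymp n$ is, by Lemma \ref{lemma:t3mogl34g} with $x(n) \asymp n^{1/3}$, of order $\exp\!\big(-\tfrac{\pi^2}{8}\cdot\tfrac{n}{\ell^2}(1+o(1))\big) = \exp(-O(n^{1/3}))$; the extra requirement of returning to a fixed point at the terminal time costs only a polynomial factor (e.g. by the local CLT for the bridge inside the window, or by summing over the last return time), which is absorbed. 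Multiplying the three pieces — $(k+1)^{-m}$, the Mogulskii-type factor, and $(k+1)^{-m}$ again, together with the polynomial correction — gives a lower bound of the form $\exp(-C_k n^{1/3})$ for a suitable constant $C_k$, which yields the claim (the $\liminf$ of the ratio being $+\infty$ follows by taking $C_k$ slightly larger than the true exponential rate).

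The main obstacle is the last point: upgrading Lemma \ref{lemma:t3mogl34g}, which only controls the unconditioned small-deviation probability $\max_{j\leq n}|X_j| \leq x(n)$, to the version that simultaneously confines the walk to the window \emph{and} pins its endpoint (here, back at $w_n$ at time $2n-2m$, with the right parity). I would handle this by a first/last-passage decomposition: condition on the last time $s$ before $2n-2m$ that the walk is at position $1$; on $[0,s]$ the walk stays in the window (Mogulskii rate), and on $[s, 2n-2m]$ it makes a single excursion away from $1$ and back. Summing the bound over admissible $s$ in a window of polynomial length near $2n-2m$ loses only a polynomial factor, which is harmless against the stretched-exponential main term. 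An alternative, slightly softer route is to not insist on landing exactly at $w_n$: allow the middle stretch to have length in a range $[2n - 2m - \ell, \, 2n-2m]$ and absorb the slack into the two straight-line segments of length $\leq m + \ell/2$ each — this also costs only a polynomial factor and avoids parity bookkeeping. Either way, the logarithm of the total probability is $-\tfrac{\pi^2}{8}\tfrac{n}{\ell^2} - O(m) - O(\log n)$, which is $-\Theta(n^{1/3})$, completing the proof.
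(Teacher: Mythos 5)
Your proposal is correct and follows essentially the same strategy as the paper: use Lemma \ref{lemma:maxdepthline} to locate a pipe of length $\Theta(n^{1/3})$ at depth $O(n^{1/3})$ with all ancestors of bounded degree, pay $(k+1)^{-O(n^{1/3})}$ for the deterministic segments, and pay the Mogulskii rate (Lemma \ref{lemma:t3mogl34g}) to stay confined in the pipe for time $\sim 2n$.

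The one place your route genuinely diverges is the handling of the ``exact return at time $2n$'' constraint, and this is worth flagging. You insist the walk return to $w_n$ at the precise time $2n-m$ (a pinned endpoint inside a finite window), which requires upgrading the small-deviation probability to a confined-bridge probability. Your two patches both work, but as written are incomplete: the first/last-passage decomposition as you phrased it (``last visit to position $1$, then a single excursion away and back'') has the excursion ending at the wrong site and does not obviously terminate; the clean way to finish that line is the reversibility argument $p_{2N}(w_n,w_n)=\sum_z p_N(w_n,z)^2\geq\frac{1}{\ell+1}\bigl(\sum_z p_N(w_n,z)\bigr)^2$ plus Mogulskii, or a direct spectral bound $p_{2N}(w_n,w_n)\geq\lambda_1^{2N}\phi_1(w_n)^2$. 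The paper avoids this issue entirely with a stopping-time trick: it lets the walk stay in the pipe only until the first time $t$ at which $|X_{2n-t}|=t$ (i.e.\ the current depth matches the time remaining), then walks straight home. This sidesteps the need to pin the exit point and time independently, so only the one-sided Mogulskii estimate is needed, and also removes the parity bookkeeping you were worried about. Your second ``softer route'' (variable-length middle stretch absorbed into the straight segments) is in spirit the paper's trick; making it precise naturally leads to exactly that stopping-time formulation.
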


\begin{proof}
Let $T$ be a tree with $T\in \mathscr{A}_k$ and where \begin{equation}\label{dnkonlilb}\underset{n\to\infty}{\text{liminf}}\ \frac{D_{n,k}(T)}{n}\geq \frac{\text{log}(1/	\tilde{\mu}_k)}{\text{log}\ \rho}.\end{equation}Now let $r_k:=\frac{\text{log}(1/	\tilde{\mu}_k)}{2\text{log}\ \rho}$ and note that by \eqref{dnkonlilb}, there must exist a value $N$ such that for each $n\geq N$ there is a vertex $v_n\in T$ for which $|v_n|<n^{1/3}$, $\text{deg}(u)\leq k$ for each $u<v$, and where $d_T(v_n)\geq r_k n^{1/3}$.  Next we define the events $B_{n,1}$, $B_{n,2}$, and $B_{n,3}$ as follows: $B_{n,1}$ is defined to be the event where $\{X_j\}$ takes its first $|v_n|$ steps towards $v_n$ (so that it lands on $v_n$ at time $t=|v_n|$), and then takes another $\lceil{\frac{r_k}{2}n^{1/3}\rceil}$ steps away from the root.  $B_{n,2}$ is defined as the event where $\{X_j\}$ resides at an $i$th generation descendant of $v_n$ at time $t=|v_n|+\lceil{\frac{r_k}{2}n^{1/3}\rceil}$ (for $1\leq i\leq 2\lceil{\frac{r_k}{2}n^{1/3}\rceil}-2$), and then remains among the first $2\lceil{\frac{r_k}{2}n^{1/3}\rceil}-2$ generations of proper descendants of $v_n$ until the first time $t$ when $|X_{2n-t}|=t$. Finally, $B_{n,3}$ is simply defined to be the event $\{X_{2n}={\bf 0}\}$.  We'll now use these three events to obtain a lower bound on ${\sf SRW}_T(X_{2n}={\bf 0})$ by first noting that \begin{equation}\label{bndingp0wbns}{\sf SRW}_T(X_{2n}={\bf 0})\geq{\sf SRW}_T\Big(B_{n,1}\cap B_{n,2}\cap B_{n,3}\Big)={\sf SRW}_T(B_{n,1})\cdot{\sf SRW}_T(B_{n,2}|B_{n,1})\cdot{\sf SRW}_T(B_{n,3}|\underset{i\leq 2}{\cap} B_{n,i}).\end{equation}Based on the definition of $v_n$, we obtain the bounds ${\sf SRW}_T(B_{n,1})\geq e^{-\text{log}(k+1)(1+r_k)n^{1/3}}$, ${\sf SRW}_T(B_{n,2}|B_{n,1})\geq {\sf SRW}_{\mathbb{Z}}\Big(\underset{j\leq 2n}{\text{max}}|X_j|\leq\lceil{\frac{r_k}{2}n^{1/3}\rceil}-1\Big)$, and ${\sf SRW}_T(B_{n,3}|\underset{i\leq 2}{\cap}B_{n,i})\geq e^{-\text{log}(k+1)(1+2r_k)n^{1/3}}$.  Observing that it follows from Lemma \ref{lemma:t3mogl34g} that \begin{equation}\label{SRWzprobconf}{\sf SRW}_{\mathbb{Z}}\Big(\underset{j\leq 2n}{\text{max}}|X_j|\leq\lceil{\frac{r_k}{2}n^{1/3}\rceil}-1\Big)=e^{-\frac{\pi^2}{r_k^2}n^{1/3}(1+o(1))},\end{equation}and combining this with \eqref{bndingp0wbns} and the bounds for ${\sf SRW}_T(B_{n,1})$ and ${\sf SRW}_T(B_{n,3}|\underset{i\leq 2}{\cap}B_{n,i})$, we now find that for $C_k:=\text{log}(k+1)\cdot(2+3r_k)+\frac{10}{r_k^2}$, we have $$\underset{n\to\infty}{\text{liminf}}\ \frac{{\sf SRW}_T(X_{2n}={\bf 0})}{e^{-C_k n^{1/3}}}=\infty.$$Since our only assumption about $T$ (in addition to being in $\mathscr{A}_k$) was \eqref{dnkonlilb}, which as we saw in \eqref{dnknratigwas} holds for almost surely every $T\in \mathscr{A}_k$, the proof is now complete.
\end{proof}

\medskip
The proof of Theorem \ref{theorem:diffbndp1p} will ultimately be broken up into two main parts, corresponding to the two inequalities in \eqref{limliminftsin}.  The first part, which is the simpler of the two, will consist of showing that \begin{equation}\label{limliminf1sin1}\underset{A\to\infty}{\text{lim}}\Bigg[\underset{n\to\infty}{\text{liminf}}\ {\sf SRW}_{\bf{T}}\bigg(\underset{j\leq 2n}{\text{max}}|X_j|\geq\frac{1}{A}n^{1/3}\ \Big|\ X_{2n}={\bf 0}\bigg)\Bigg]=1\ \ {\sf GW}-\text{a.s.}\end{equation}and the second will be to show that \begin{equation}\label{limliminf1sin2}\underset{A\to\infty}{\text{lim}}\Bigg[\underset{n\to\infty}{\text{liminf}}\ {\sf SRW}_{\bf{T}}\bigg(\underset{j\leq 2n}{\text{max}}|X_j|\leq An^{1/3}\ \Big|\ X_{2n}={\bf 0}\bigg)\Bigg]=1\ \ {\sf GW}-\text{a.s.}\end{equation}Establishing \eqref{limliminf1sin1} will not require much beyond Lemma \ref{lemma:maxdepthline} and Proposition \ref{prop:lbfrp}.  To prove \eqref{limliminf1sin2} however, we will first need to achieve several additional results.

\medskip
\begin{lemma}\label{lemma:profdegstov}
For any vertex $v$ in a tree $T$, define $D_T(v):=\underset{v'<v}{\prod}\emph{deg}(v')$, and set $M_T(n):=\emph{max}\{D_T(v):|v|=n\}$.  Then there exists $\alpha>0$ such that for ${\sf GW}-\emph{a.e.}$ tree $T$, there exists $N$ (which can depend on $T$) such that $M_T(n)\leq \alpha^n$ for all $n\geq N$.
\end{lemma}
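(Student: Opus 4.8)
The plan is to bound $M_T(n)$ by a first-moment (Markov) estimate on the number of level-$n$ vertices $v$ for which $D_T(v)$ is exponentially large, and then to finish with Borel--Cantelli. Fix a small exponent $s\in(0,\delta]$ and introduce the random quantity $\Sigma_n:=\sum_{|v|=n}D_T(v)^{s}$. The key identity I would establish is
$$\E\big[\Sigma_n\big]=\big(\E[Z^{1+s}]\big)^{n}.$$
Since $\Prob(Z=0)=0$ forces $Z\geq 1$ ${\sf GW}$-a.s., we have $Z^{1+s}\leq Z^{1+\delta}$ pointwise, so $1\leq\E[Z^{1+s}]\leq\E[Z^{1+\delta}]<\infty$; hence the right-hand side is a finite geometric sequence with base at least $1$.

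To prove the identity I would induct on $n$, peeling off the root. Writing an arbitrary level-$n$ vertex as $v=(v_0,\dots,v_n)$ with $v_0={\bf 0}$, we have $D_T(v)^{s}=\deg({\bf 0})^{s}\prod_{i=1}^{n-1}\deg(v_i)^{s}$. Grouping the level-$n$ vertices according to their level-$1$ ancestor and using the branching property (the subtrees rooted at the children of ${\bf 0}$ are i.i.d.\ copies of ${\bf T}$, independent of $\deg({\bf 0})$), one gets $\E\big[\Sigma_n\,\big|\,\deg({\bf 0})=m\big]=m^{1+s}\,\E[\Sigma_{n-1}]$, whence $\E[\Sigma_n]=\E[Z^{1+s}]\,\E[\Sigma_{n-1}]$; since $\Sigma_0=D_T({\bf 0})^s=1$ (empty product), the recursion closes. (This is just the many-to-one lemma specialized to ancestral-degree functionals; equivalently $\Sigma_n/\E[Z^{1+s}]^{n}$ is a nonnegative martingale for the generation filtration.)

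Granting the identity, take $s=\delta$, write $\lambda:=\E[Z^{1+\delta}]\in[1,\infty)$, and pick any constant $\alpha>\lambda^{1/\delta}$ (note $\alpha>1$). By Markov's inequality,
$$\Prob\big(M_T(n)>\alpha^{n}\big)=\Prob\Big(\exists\,|v|=n:\ D_T(v)^{\delta}>\alpha^{\delta n}\Big)\leq\alpha^{-\delta n}\,\E\big[\Sigma_n\big]=\big(\lambda\alpha^{-\delta}\big)^{n},$$
and $\lambda\alpha^{-\delta}<1$, so $\sum_n\Prob(M_T(n)>\alpha^{n})<\infty$. Borel--Cantelli then gives that ${\sf GW}$-a.s.\ the event $\{M_T(n)>\alpha^{n}\}$ holds for only finitely many $n$, i.e.\ there is a tree-dependent $N$ with $M_T(n)\leq\alpha^{n}$ for all $n\geq N$, which is the claim. (If $T$ is finite then $M_T(n)$ is eventually a maximum over the empty set and the bound is vacuous once $\alpha>1$.)

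I do not expect a real obstacle here. The only point requiring care is the moment identity for $\Sigma_n$, a standard spine/branching-property computation, together with the observation that the hypothesis $\E[Z^{1+\delta}]<\infty$ is precisely what forces $\E[\Sigma_n]$ to grow only geometrically, so that a sufficiently large $\alpha$ beats it; everything else is a Borel--Cantelli argument of the same flavor as those flagged in Remark 1.
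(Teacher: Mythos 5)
Your proof is correct and rests on the same underlying first-moment/Markov/Borel--Cantelli argument as the paper's, yielding the identical summable bound ${\sf GW}(M_{\bf T}(n)>\alpha^n)\leq\big({\bf E}[Z^{1+\delta}]/\alpha^\delta\big)^n$. The difference is presentational: the paper introduces the size-biased measure ${\sf GW}_n = W_n\cdot{\sf GW}$ paired with a uniformly random level-$n$ path, establishes the joint distribution of the ancestral degrees under ${\sf UNIF}_n$ (a many-to-one identity), computes ${\bf E}_{{\sf UNIF}_n}[D_{\bf T}(\omega_n)^\delta] = {\bf E}[Z^{1+\delta}]^n/\mu^n$, and then converts the ${\sf UNIF}_n$-tail bound back to a ${\sf GW}$-tail bound via the inequality ${\sf UNIF}_n(D_{\bf T}(\omega_n)>\alpha^n)\geq {\sf GW}(M_{\bf T}(n)>\alpha^n)/\mu^n$. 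You bypass that change of measure entirely by working directly with $\Sigma_n=\sum_{|v|=n}D_T(v)^\delta$ and proving ${\bf E}_{\sf GW}[\Sigma_n]={\bf E}[Z^{1+\delta}]^n$ by a one-step recursion at the root; this is more elementary and removes the $\mu^n$ bookkeeping. Your induction step is correct: given $\deg({\bf 0})=m$, the decomposition $D_T(v)=\deg({\bf 0})\cdot D_{T(u_j)}(v)$ over the $m$ i.i.d.\ subtrees gives ${\bf E}[\Sigma_n\mid\deg({\bf 0})=m]=m^{1+\delta}{\bf E}[\Sigma_{n-1}]$, which closes the recursion since $\Sigma_0=1$. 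One small note: your observation that $Z^{1+s}\leq Z^{1+\delta}$ for $s\leq\delta$ is only needed if you wanted to take $s<\delta$, but since you set $s=\delta$ it is idle; similarly the aside about finite $T$ never arises here since ${\bf P}(Z=0)=0$ makes every tree infinite. The paper's ${\sf UNIF}_n$ machinery is not strictly needed for later results either (Propositions~\ref{prop:lbo1cir} and~\ref{prop:lbonpolt} invoke only the conclusion of this lemma, not its proof apparatus), so your streamlined version would serve equally well.
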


\begin{proof}
For any tree $T$, let $T_n$ represent $T$ up through level $n$, let $Z_n(T)$ denote the size of the $n$th generation of $T$, and let $W_n(T):=\frac{Z_n(T)}{\mu^n}$.  In addition, define ${\sf GW}_n:=W_n\cdot{\sf GW}$.  Finally, let $\mu_n$ represent the uniform measure on non-backtracking paths of length $n$ in $T$ starting at the root, and define the measure ${\sf UNIF}_n:={\sf GW}_n\times\mu_n$ on tuples $(T,\omega_n)\in\mathcal{T}_{\mathcal{I}}\times\Omega_n$, where $\mathcal{T}_{\mathcal{I}}$ represents the set of infinite rooted trees with no leaves, and $\Omega_n$ represents the non-backtracking paths to level $n$ (note that for ease of notation, we've suppressed any reference to $T$ in the symbols $\Omega_n$ and $\mu_n$, even though both are always defined with respect to a particular tree $T$).  Now for any path $\omega_n$, let $v_0,v_1\dots,v_n$ represent the vertices of $\omega_n$ starting with the root and ordered by height.  We wish to show that for any  sequence of positive integers $r_0,r_1\dots,r_{n-1}$, we have \begin{equation}\label{jntdistprdeg}{\sf UNIF}_n\big(\text{deg}(v_i)=r_i\ \forall\ i\ \text{s.t. }0\leq i\leq n-1\big)=\frac{\underset{0\leq i\leq n-1}{\prod}{\bf P}(Z=r_i)\cdot r_i}{\mu^n}.\end{equation}Letting $\partial T_n$ represent the level $n$ vertices of $T_n$, we first note that \begin{align*}{\sf UNIF}_n\big(\text{deg}(v_i)=r_i\ \forall\ i\ \text{s.t. }0\leq i\leq n-1\big)&=\int\frac{\big|\{\omega_n\in\partial{\bf T}_n:\text{deg}(v_i)=r_i\ \forall\ i\}\big|}{Z_n({\bf T})}d{\sf GW}_n \\ &=\int\frac{\big|\{\omega_n\in\partial{\bf T}_n:\text{deg}(v_i)=r_i\ \forall\ i\}\big|}{Z_n({\bf T})}\cdot W_n({\bf T})d{\sf GW} \\ &=\frac{1}{\mu^n}{\bf E}_{{\sf GW}}\big[\big|\{\omega_n\in\partial{\bf T}_n:\text{deg}(v_i)=r_i\ \forall\ i\}\big|\big],\end{align*}which implies that in order to establish \eqref{jntdistprdeg}, it will suffice to show that \begin{equation}\label{exnumdegeprod}{\bf E}_{{\sf GW}}\big[\big|\{\omega_n\in\partial{\bf T}_n:\text{deg}(v_i)=r_i\ \forall\ i\}\big|\big]=\underset{0\leq i\leq n-1}{\prod}{\bf P}(Z=r_i)\cdot r_i.\end{equation}The case of $n=1$ is immediate.  If we now assume that \eqref{exnumdegeprod} holds for all $j<n$ (for $n\geq 2$), then we find that \begin{align*}{\bf E}_{{\sf GW}}\big[\big|\{\omega_n\in\partial{\bf T}_n:\text{deg}(v_i)=r_i\ \forall\ i\}\big|\big]&={\bf P}(Z=r_0)\cdot r_0\cdot{\bf E}_{{\sf GW}}\big[\big|\{\omega_{n-1}\in\partial{\bf T}_{n-1}:\text{deg}(v_i)=r_{i+1}\ \forall\ i\}\big|\big] \\ &=\underset{0\leq i\leq n-1}{\prod}{\bf P}(Z=r_i)\cdot r_i.\end{align*}Hence, \eqref{exnumdegeprod}, and therefore \eqref{jntdistprdeg}, now follows by induction.

In a slight abuse of notation, we now let $D_T(\omega_n)$ represent $D_T(v_n)$, and observe that \eqref{jntdistprdeg} implies that $${\bf E}_{{\sf UNIF}_n}[D_{{\bf T}}(\omega_n)^{\delta}]=\frac{1}{\mu^n}\sum_{r_0,\dots,r_{n-1}}\Big({\bf P}(Z=r_0)r_0\Big)\dots\Big({\bf P}(Z=r_{n-1})r_{n-1}\Big)\big(r_0\cdot r_1\dots r_{n-1}\big)^{\delta}=\frac{1}{\mu^n}{\bf E}[Z^{1+\delta}]^n.$$ By Markov's inequality, it follows that for $\alpha>0$, we have \begin{equation}\label{bndonpdtgtcn}{\sf UNIF}_n\Big(D_{{\bf T}}(\omega_n)>\alpha^n\Big)\leq\bigg(\frac{{\bf E}[Z^{1+\delta}]}{\mu\cdot \alpha^{\delta}}\bigg)^n.\end{equation}Combining this with the fact that $${\sf UNIF}_n\Big(D_{{\bf T}}(\omega_n)>\alpha^n\Big)\geq\int\frac{1}{Z_n({\bf T})}1_{\{M_{{\bf T}}(n)>\alpha^n\}}d{\sf GW}_n=\int\frac{W_n({\bf T})}{Z_n({\bf T})}1_{\{M_{{\bf T}}(n)>\alpha^n\}}d{\sf GW}=\frac{{\sf GW}(M_{{\bf T}}(n)>\alpha^n)}{\mu^n},$$we find that ${\sf GW}(M_{{\bf T}}(n)>\alpha^n)\leq\Big(\frac{{\bf E}[Z^{1+\delta}]}{\alpha^{\delta}}\Big)^n$.  Taking $\alpha>{\bf E}[Z^{1+\delta}]^{1/\delta}$, the lemma now follows from the Borel-Cantelli lemma.
\end{proof}

\medskip
Our first application of this last result will be in the proof of the following key proposition.  Throughout the proof, the symbols $\Omega_n$ and $\omega_n$ will refer to the same objects as in the proof of Lemma \ref{lemma:profdegstov}, except that the trees on which they're defined will have weighted edges.

\medskip
\begin{prop}\label{prop:lbo1cir}
For any tree $T\in\mathcal{T}_{\mathcal{I}}$, let $v$ be a vertex in $T_n$, and define $$m_T(v):=\big|\{v'<v:\emph{deg}(v')\geq 2\}\big|.$$Then there exists $c>0$ such that for ${\sf GW}-\emph{a.s.}$ every $T\in\mathcal{T}_{\mathcal{I}}$ there is an $N$ (which can depend on $T$) such that for every $n\geq N$ we have $\underset{v\in T_n}{\emph{min}}m_T(v)\geq cn$.
\end{prop}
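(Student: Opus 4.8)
The plan is to control the ${\sf GW}$-probability that \emph{some} level-$n$ vertex has too few branch points on its ancestral line, and then invoke Borel--Cantelli. For a level-$n$ vertex $v$ with ancestral path ${\bf 0}=v_0,v_1,\dots,v_{n-1},v_n=v$, the proper ancestors are exactly $v_0,\dots,v_{n-1}$, so $m_T(v)=\sum_{i=0}^{n-1}1_{\{\text{deg}(v_i)\geq 2\}}$; this is purely a statement about the ancestral degree sequence. I would pass to the size-biased measure ${\sf UNIF}_n={\sf GW}_n\times\mu_n$ on pairs $(T,\omega_n)$ exactly as in the proof of Lemma \ref{lemma:profdegstov}. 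The event $\{\min_{v\in T_n}m_T(v)<cn\}$ forces at least one of the $Z_n(T)$ non-backtracking paths to have $m_T(\omega_n)<cn$, so the same change-of-measure computation as in Lemma \ref{lemma:profdegstov} (using ${\sf GW}_n=W_n\cdot{\sf GW}$ and $W_n/Z_n=\mu^{-n}$) gives
$${\sf UNIF}_n\big(m_{\bf T}(\omega_n)<cn\big)\;\geq\;\frac{1}{\mu^n}\,{\sf GW}\Big(\min_{v\in{\bf T}_n}m_{\bf T}(v)<cn\Big).$$
On the other hand, by \eqref{jntdistprdeg} the degrees $\text{deg}(v_0),\dots,\text{deg}(v_{n-1})$ are, under ${\sf UNIF}_n$, i.i.d.\ with the size-biased law ${\bf P}(\text{deg}(v_0)=r)={\bf P}(Z=r)\,r/\mu$; hence $m_{\bf T}(\omega_n)$ is a $\mathrm{Binomial}(n,p)$ variable with $p=\sum_{r\geq 2}{\bf P}(Z=r)\,r/\mu=1-\rho/\mu$ (here ${\bf P}(Z=0)=0$, so the $r=1$ term contributes $\rho/\mu$), and $p>0$ since $Z$ is non-constant.

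Next I would apply a Chernoff bound: for $0<c<p$,
$${\sf UNIF}_n\big(m_{\bf T}(\omega_n)<cn\big)\;\leq\;e^{-n\,I(c)},\qquad I(c):=c\,\text{log}\frac{c}{p}+(1-c)\,\text{log}\frac{1-c}{1-p}.$$
Combining the two displays yields
$${\sf GW}\Big(\min_{v\in{\bf T}_n}m_{\bf T}(v)<cn\Big)\;\leq\;\big(\mu\,e^{-I(c)}\big)^n.$$
The crucial observation is that $I(c)\to-\text{log}(1-p)=\text{log}(\mu/\rho)$ as $c\downarrow 0$, and $\text{log}(\mu/\rho)>\text{log}\,\mu$ precisely because $\rho={\bf P}(Z=1)<1$. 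Therefore I can fix $c>0$ small enough that $I(c)>\text{log}\,\mu$, whence $\mu\,e^{-I(c)}<1$ and the bound above is summable in $n$. Borel--Cantelli then gives that ${\sf GW}$-a.s.\ only finitely many $n$ satisfy $\min_{v\in{\bf T}_n}m_{\bf T}(v)<cn$, which is the claimed statement with $N=N(T)$ taken to be one more than the last such $n$.

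The single point requiring care is the choice of $c$: the whole argument rests on pushing the large-deviation rate $I(c)$ strictly above $\text{log}\,\mu$, which is exactly what is needed to kill the factor $\mu^n$ produced by the size-biasing, and this is possible only because $Z$ is non-constant (so $\rho<1$); no moment assumption beyond $\mu<\infty$ is used. For completeness one should note that if $\rho=0$ the statement is trivial with any $c<1$, since then every vertex of a tree in $\mathcal{T}_{\mathcal{I}}$ has degree at least $2$; and attaching weights to the edges of $T$ (as flagged before the proposition) affects nothing, since $\mu_n$ is uniform over non-backtracking paths and \eqref{jntdistprdeg} is purely combinatorial.
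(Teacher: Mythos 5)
Your proof is correct, and it takes a genuinely different and more economical route than the paper. The paper's argument collapses degree-one chains into a weighted tree $\mathcal{T}_w$ with i.i.d.\ geometric edge weights, applies Lemma \ref{lemma:profdegstov} to the truncated offspring law $Z^*=(Z\,|\,Z\geq 2)$ to control the number of non-backtracking paths, and then runs a large-deviations estimate on the geometric weight sums followed by Borel--Cantelli; in particular it inherits the hypothesis ${\bf E}[Z^{1+\delta}]<\infty$ through its use of Lemma \ref{lemma:profdegstov}. You instead stay on the unweighted tree and argue directly: under ${\sf UNIF}_n={\sf GW}_n\times\mu_n$ the ancestral degrees are i.i.d.\ size-biased by \eqref{jntdistprdeg}, so $m_{\bf T}(\omega_n)$ is $\mathrm{Binomial}(n,1-\rho/\mu)$, and a Chernoff bound plus the change-of-measure identity
\[
{\sf UNIF}_n\big(m_{\bf T}(\omega_n)<cn\big)=\frac{1}{\mu^n}\,{\bf E}_{\sf GW}\big[\,\big|\{\omega_n:m_{\bf T}(\omega_n)<cn\}\big|\,\big]\;\geq\;\frac{1}{\mu^n}\,{\sf GW}\Big(\min_{v\in\partial{\bf T}_n}m_{\bf T}(v)<cn\Big)
\]
gives a summable bound once $I(c)>\log\mu$, which holds for small $c$ precisely because $\rho<1$. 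This first-moment argument avoids both the weighted-tree construction and the auxiliary degree-product control, and it needs only $\mu<\infty$ rather than $1+\delta$ moments, so for this particular proposition it is simultaneously simpler and assumes less. The only caveat is that the paper's weighted-tree framework is doing double duty in the surrounding section (the quantities $\alpha$ and $r$ it produces are reused in Proposition \ref{prop:lbonpolt}), so adopting your shortcut wholesale would require a small amount of rewiring downstream; as a standalone proof of the stated proposition it is complete and correct.
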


\begin{proof}
We start by defining $\mathcal{T}_w$ to be the set of infinite rooted trees with no leaves or degree-$1$ vertices, endowed with positive integer edge weights (a tree in $\mathcal{T}_w$ will be denoted as $T_w$).  Now let $\phi:\mathcal{T}_{\mathcal{I}}\to\mathcal{T}_w$ be the map which acts on a tree $T$ by collapsing each maximal chain of edges for which all interior vertices have degree $1$ to a single edge, and then assigning each of these edges an edge weight equal to the length of the original chain (an edge that does not border a degree-$1$ vertex remains as is and is assigned the value $1$).  Letting $\mathcal{F}_w$ be the canonical $\sigma$-field on $\mathcal{T}_w$ generated by both the tree and its corresponding edge weights, we define the measure ${\sf GW}^*$ on $(\mathcal{T}_w,\mathcal{F}_w)$ as ${\sf GW}^*(\cdot)={\sf GW}'(\phi^{-1}(\cdot))$ (where ${\sf GW}'$ denotes ${\sf GW}(\cdot\ |\ \text{deg}({\bf 0})\geq 2)$), and then note that ${\sf GW}^*$ corresponds to a Galton-Watson process with offspring distribution $Z^*:=(Z|Z\geq 2)$, where edges are assigned weights that are i.i.d. geometric random variables with parameter $p:={\bf P}(Z=1)$.  Next we define the function $S^{(n)}:\mathcal{T}_w\times\Omega_n\to\mathbb{R}$ so that $S^{(n)}(\omega_n)$ (for $\omega_n\in\Omega_n$) is equal to the sum of the weights of the edges in $\omega_n$.  Noting that the edge weights of $\omega_n$ (presuming $(T_w,\omega_n)$ is selected according to the product measure ${\sf GW}^*\times\mu_n$, denoted below as ${\sf GWP}^*_n$) are i.i.d. geometric r.v.'s with parameter $p$, it follows from a basic application of large deviation theory (see, for instance, \cite{DZ}) that there exists a continuous increasing function $\Psi:(\frac{1}{1-p},\infty)\to (0,\infty)$ such that for every $n$, ${\sf GWP}^*_n\Big(\frac{S^{(n)}(\omega_n)}{n}>r\Big)\leq e^{-\Psi(r)n}$ (where $\underset{x\to\infty}{\text{lim}}\Psi(x)=\infty$). 

For the next step, we start by observing that the proof of Lemma \ref{lemma:profdegstov} above works for any offspring distribution and corresponding Galton-Watson measure, provided the offspring distribution is greater than $1$ almost surely and has $1+\delta$ moments.  In particular, this means that there exists an $\alpha<\infty$ for which the conclusion of Lemma \ref{lemma:profdegstov} holds for the offspring distribution $Z^*$.  Selecting some $r<\infty$ such that $\Psi(r)>\text{log}\ \alpha$, we then observe that \begin{equation}\label{intbndfsbnd}\int 1_{\{S^{(n)}(\omega_n)>rn\}}d{\sf GW}^*\times\mu_n\leq e^{-\Psi(r)n}\ \ \forall\ n\geq 1.\end{equation}Now defining $U_N:=\{(T_w,\omega_N)\in\mathcal{T}_w\times\Omega_N:M_{T_w}(n)\leq \alpha^n\ \forall\ n\geq N\}$, we note that \eqref{intbndfsbnd} implies that for any $N$, we have \begin{equation}\label{intbndfsbndA}\int_{U_N} 1_{\{S^{(n)}(\omega_n)>rn\}}d{\sf GW}^*\times\mu_n\leq e^{-\Psi(r)n}\ \ \forall\ n\geq N.\end{equation}Since $\mu_n(\omega_n)=\frac{1}{D_{T_w}(v_n)}$ for any $(T_w,\omega_n)\in\mathcal{T}_w\times\Omega_n$, it follows from \eqref{intbndfsbndA} and the definition of $U_N$, that for any $n\geq N$ we have$${\sf GW}^*\Big(\{T_w\in\mathcal{T}_w:\underset{\omega_n\in\Omega_n}{\text{max}}S^{(n)}(\omega_n)>rn\}\cap U_N\Big)\leq \alpha^n e^{-\Psi(r)n}=e^{-(\Psi(r)-\text{log}\ \alpha)n}.$$Since $r$ was chosen to satisfy $\Psi(r)>\text{log}\ \alpha$, the expression on the right of the inequality is summable, which means by Borel-Cantelli, for ${\sf GW}^*$-a.s. every $T_w\in U_N$, there exists $N'$ such that $\underset{\omega_n\in\Omega_n(T_w)}{\text{max}}S^{(n)}(\omega_n)\leq rn$ for all $n\geq N'$.  Since the $U_n$'s are increasing, and because it follows from Lemma \ref{lemma:profdegstov} that ${\sf GW}^*\big(\underset{n}{\cup}U_n\big)=1$, we can further conclude that for ${\sf GW}^*$-a.s. every weighted tree $T_w$, there exists a value $N$ (that can depend on $T_w$) such that $\underset{\omega_n\in\Omega_n(T_w)}{\text{max}}S^{(n)}(\omega_n)\leq rn$ for all $n\geq N$.  Choosing any value for $c$ (defined in the statement of the proposition) that satisfies $0<c<\frac{1}{r}$, the result now follows.
\end{proof}

\medskip
With Proposition \ref{prop:lbo1cir} established, we will now use it to achieve the main preliminary result that will be needed in order to prove \eqref{limliminf1sin2}.  In words, what this result states is that there is a value $p>0$ such that ${\sf GW}-$a.s. every $T$ has the property that for all vertices $v\in T$ that are sufficiently far from ${\bf 0}$, the percentage of vertices $v'$ along $\omega_v$ (the path connecting ${\bf 0}$ to $v$) for which, with probability at least $p$, simple random walk beginning at $v'$ never returns to $\omega_v$ following its first step, is bounded away from $0$.  The importance of this conclusion lies in the fact that it immediately implies the existance of an almost sure exponential upper bound on the probability that simple random walk, beginning sufficiently far from ${\bf 0}$, ever reaches ${\bf 0}$.

\medskip
\begin{prop}\label{prop:lbonpolt}
Take $T\in\mathcal{T}_{\mathcal{I}}$, let $v\in T$ be a non-root vertex, let $v'\in T$ be a child of $v$, and define $$\hat{P}_T(v,v'):={\sf SRW}_T\Big(\{|X_1|=|v|+1\}\cap\{X_1\neq v'\}\cap\{X_j\neq v\ \forall\ j\geq 1\}\ \Big|\ X_0=v\Big).$$In addition, let ${\bf 0}=v_0,v_1,\dots,v_n=v$ represent the vertices along the path from the root to $v$, and set $$N_p(T,v):=\big|\{1\leq j<n:\hat{P}_T(v_j,v_{j+1})\geq p\}\big|.$$Then there exists $p>0$, $s>0$ such that for ${\sf GW}-\emph{a.e.}\ T$, there is an $N$ (which can depend on $T$) such that for all $v\in T$ with $|v|\geq N$ we have $N_p(T,v)\geq s|v|$.
\end{prop}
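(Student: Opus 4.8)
The plan is to combine Proposition~\ref{prop:lbo1cir} (order-$n$ many degree-$\ge 2$ vertices on every root-to-level-$n$ path), Lemma~\ref{lemma:profdegstov} (the rare high-degree vertices can be discarded), and a first-moment / large-deviation argument over the level-$n$ vertices of ${\bf T}$. For a rooted tree $T'$ let $q(T')$ denote the probability that simple random walk on the tree obtained from $T'$ by attaching one extra vertex as a parent of the root, started at the root, never visits that extra vertex; then $q(T')>0$ iff simple random walk on $T'$ is transient. Since ${\bf P}(Z=0)=0$ forces ${\sf GW}$ onto $\mathcal{T}_{\mathcal{I}}$, and since simple random walk on a supercritical Galton--Watson tree is a.s.\ transient (it is the $1$-biased walk and $\mu>1$, a classical fact), we have $q({\bf T})>0$ ${\sf GW}$-a.s., hence $\theta(q_0):={\sf GW}\big(q({\bf T})\ge q_0\big)\uparrow 1$ as $q_0\downarrow 0$. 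The link to $\hat P_T$ is immediate: if $v$ is a non-root vertex with a child $v'$ and $w'\neq v'$ is another child of $v$, then taking the first step from $v$ to $w'$ (probability $\tfrac1{\deg(v)+1}$) and then never returning to $v$ (probability $q(T_{w'})$ by the Markov property, where $T_{w'}$ is the subtree rooted at $w'$, so that $\{v\}\cup T_{w'}$ is $T_{w'}$ with a parent attached) realizes the event defining $\hat P_T$, so
\[
\hat P_T(v,v')\ \ge\ \frac{q(T_{w'})}{\deg(v)+1}.
\]
Consequently it suffices to prove that ${\sf GW}$-a.s., for all large $n$, every level-$n$ vertex $v$ has at least $sn$ ancestors $v_j$ of degree $\le K$ possessing an off-path child $w'$ with $q(T_{w'})\ge q_0$; then $N_p(T,v)\ge sn$ with $p:=q_0/(K+1)$.

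Fix $\alpha$ from Lemma~\ref{lemma:profdegstov} and $c$ from Proposition~\ref{prop:lbo1cir}. From $\prod_{j<n}\deg(v_j)=D_T(v)\le M_T(n)\le\alpha^n$ (eventually, ${\sf GW}$-a.s., uniformly over $|v|=n$) we get $\#\{j<n:\deg(v_j)>K\}\le n\log\alpha/\log K$, which is $<\tfrac c4 n$ once $K$ is large; together with Proposition~\ref{prop:lbo1cir} this shows that ${\sf GW}$-a.s.\ every level-$n$ vertex eventually has at least $\tfrac{3c}4 n$ ancestors of degree in $\{2,\dots,K\}$. Next choose $q_0>0$ small, and afterwards $s>0$ small, so that the Chernoff bound ${\bf P}\big(\mathrm{Bin}(\tfrac{3c}4 n,\theta(q_0))<sn\big)\le e^{-In}$ holds with $I>\log\mu$; this is possible because the exponential rate of this probability tends to $+\infty$ as $\theta(q_0)\uparrow 1$ and $s\downarrow 0$. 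Put $p:=q_0/(K+1)$.

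Now the first-moment bound. Fix an Ulam--Harris vertex $v$ at level $n$ and condition on $\{v\in{\bf T}\}$ and on the degrees $\deg(v_0),\dots,\deg(v_{n-1})$ along its spine; by the branching property the subtrees of ${\bf T}$ hanging off the off-spine children are i.i.d.\ ${\sf GW}$ and independent of the spine. Hence, on the event that the spine carries at least $\tfrac{3c}4 n$ vertices of degree in $\{2,\dots,K\}$, the count of those whose (say, leftmost) off-spine child $w'$ satisfies $q(T_{w'})\ge q_0$ stochastically dominates $\mathrm{Bin}(\tfrac{3c}4 n,\theta(q_0))$, so it is $<sn$ with conditional probability $\le e^{-In}$. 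Since $\sum_{|v|=n}{\bf P}(v\in{\bf T})=\mu^n$, the expected number of level-$n$ vertices $v$ whose spine has $\ge\tfrac{3c}4 n$ degree-$\{2,\dots,K\}$ vertices but fewer than $sn$ ``good'' ones is at most $\mu^n e^{-In}$, which is summable; Borel--Cantelli then rules out such $v$ for all large $n$, ${\sf GW}$-a.s. Intersecting with the a.s.\ event of the previous paragraph, we conclude: ${\sf GW}$-a.s., for all large $n$, every level-$n$ vertex $v$ has at least $sn$ ancestors $v_j$ that are degree-$\le K$ branch points with an off-spine child $w'$ having $q(T_{w'})\ge q_0$, and each such $v_j$ gives $\hat P_T(v_j,v_{j+1})\ge q_0/(\deg(v_j)+1)\ge p$. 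Therefore $N_p(T,v)\ge sn\ge s|v|$ for all $|v|$ large, as claimed.

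I expect the last step to be the main obstacle: level $n$ of ${\bf T}$ has exponentially many vertices, so the ``too few good ancestors'' event for a fixed vertex must be shown to decay faster than $\mu^{-n}$, which is precisely why $q_0$ (equivalently $\theta$) and $s$ must be chosen small enough that the Chernoff rate dominates $\log\mu$ — and why Lemma~\ref{lemma:profdegstov} is invoked first, since capping the degrees at $K$ is what makes the per-ancestor contribution $q_0/(\deg(v_j)+1)$ uniformly bounded below.
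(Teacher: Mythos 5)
Your proof is correct, and it is in the same spirit as the paper's (i.i.d.\ structure of the bushes hanging off the spine, large deviations, Borel--Cantelli, Proposition~\ref{prop:lbo1cir} for the branch-point count, Lemma~\ref{lemma:profdegstov} for degree control), but the mechanism differs in two places. First, where you lower-bound $\hat P_T(v_j,v_{j+1})\ge q(T_{w'})/(\deg(v_j)+1)$, cap the degree at $K$ via Lemma~\ref{lemma:profdegstov}, and control $q$ through a.s.\ transience of SRW on supercritical ${\sf GW}$ trees, the paper never decouples the degree from the escape probability: it works directly with the events $A_p(T,v_j,v_{j+1})=\{0<\hat P_T(v_j,v_{j+1})<p\}$, which are i.i.d.\ under ${\sf GW}\times\mu_n$ and whose probability vanishes as $p\downarrow0$ for free, so the degree capping is absorbed into the choice of $p$. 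Second, your union bound is the many-to-one identity $\sum_{|v|=n}{\bf P}(v\in{\bf T})=\mu^n$, requiring the Chernoff rate to beat $\log\mu$; the paper instead integrates the large-deviation bound against the uniform path measure $\mu_n$ and then converts to a ${\sf GW}$-a.s.\ statement by noting that on $\{M_{\bf T}(n)\le\alpha^n\}$ one has $\mu_n(\omega_n)\ge\alpha^{-n}$, so the rate must beat $\log\alpha$ instead. Your route is slightly more economical here since $\mu<\alpha$, at the cost of having to introduce transience and the $K$-cap explicitly; the paper's route is a clean reuse of the exact argument template from Proposition~\ref{prop:lbo1cir}. Both give the desired constants $p,s$ after tuning. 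One stylistic note: you should state explicitly that your conditional Chernoff bound is uniform in the Ulam--Harris label of $v$ (it is, since given $\{v\in{\bf T}\}$ and the spine degrees the off-spine subtrees are i.i.d.\ ${\sf GW}$ regardless of the label), as this uniformity is what lets you pull the $e^{-In}$ factor outside the sum over $v$.
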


\begin{proof}
To prove this result, we sample from the space $\mathcal{T}_{\mathcal{I}}\times\Omega_n$ according to the product measure ${\sf GW}\times\mu_n$.  An important property of the measure ${\sf GW}\times\mu_n$ that we will use is that when we sample according to it, the trees ${\bf T}(v_j)\setminus {\bf T}(v_{j+1})$ (for $1\leq j<n$) are independent and identically distributed (note that when $v_j$ has only one child, $T(v_j)\setminus T(v_{j+1})$ is a finite tree consisting of a single vertex).  This fact is illustrated by observing that sampling according to ${\sf GW}\times\mu_n$ can be achieved via the following random algorithm (note that to describe the algorithm we use a format resembling pseudocode):

\bigskip
\noindent
STEP 1: Let $T_0$ represent a tree consisting of a single vertex $v_0$.  Define an index $i$ with initial value $0$.

\medskip
\noindent
STEP 2: Employ the following loop:

\medskip
\noindent
While ($i<n$);

\medskip
Attach $Z$ child vertices to $v_i$;

\medskip
Pick one of the child vertices of $v_i$ uniformly at random and designate it $v_{i+1}$; 

\medskip
Attach trees with independent ${\sf GW}$ distributions to each of the children of $v_i$ other than $v_{i+1}$;

\medskip
Set $i=i+1$;

\medskip
\noindent
STEP 3: Once the loop terminates, attach a weighted tree to $v_n$ with distribution ${\sf GW}$.

\bigskip
\noindent
If we now let $({\bf T},\omega_n)$ be the tuple we obtain by setting ${\bf T}$ equal to the tree generated  through the above procedure, and $\omega_n$ equal to $(v_0,v_1,\dots,v_n)$, then it is not hard to see that the trees ${\bf T}(v_j)\setminus{\bf T}(v_{j+1})$ are i.i.d., and that $({\bf T},\omega_n)$ has distribution ${\sf GW}\times\mu_n$.

For the next step, let $T$, $v$, and $v'$ be as defined in the statement of the proposition, and define the event $A_p(T,v,v'):=\{0<\hat{P}_T(v,v')<p\}$.  Since each of the functions $1_{A_p({\bf T},v_j,v_{j+1})}$ depends entirely on the tree $T(v_j)\setminus T(v_{j+1})$, they are i.i.d.  Hence, if ${\sf GWP}_n\big(A_p({\bf T},v_j,v_{j+1})\big)$ is less than $\frac{r}{2}$ (where $r$ is defined as it was in the proof of Proposition \ref{prop:lbo1cir} and ${\sf GWP}_n:={\sf GW}\times\mu_n$) and we let $$\Psi_p\Big(\frac{r}{2}\Big):=\underset{\lambda}{\text{sup}}\ \frac{r}{2}\lambda-\text{log}\Big({\bf E}_{{\sf GWP}_n}\big[e^{-\lambda 1_{A_p({\bf T},v_1,v_2)}}\big]\Big),$$ then by another application of large deviations we have that \begin{equation}\label{bndsapld}{\sf GWP}_n\bigg(\frac{\sum_{j=1}^{n-1}1_{A_p({\bf T},v_j,v_{j+1})}}{n-1}\geq\frac{r}{2}\bigg)\leq e^{-\Psi_p(\frac{r}{2})(n-1)}.\end{equation}Noting that ${\sf GWP}_n\big(A_p({\bf T},v_1,v_2)\big)\to 0$ as $p\to 0$, we see that $\Psi_p\Big(\frac{r}{2}\Big)\to\infty$ as $p\to 0$.  Therefore, we can select $p>0$ such that $\Psi_p\Big(\frac{r}{2}\Big)>\text{log}\ \alpha$ (with $\alpha$ defined as in the proof of Proposition \ref{prop:lbo1cir}).  Letting $B({\bf T},n,p,r)$ represent the event $\frac{\sum_{j=1}^{n-1}1_{A_p({\bf T},v_j,v_{j+1})}}{n-1}\geq\frac{r}{2}$, it follows that \begin{equation}\label{intexpforpbn}\int 1_{B({\bf T},n,p,r)}d{\sf GW}\times\mu_n\leq e^{-\Psi_p(\frac{r}{2})(n-1)}.\end{equation}Now if we apply to \eqref{intexpforpbn} the same argument involving Lemma \ref{lemma:profdegstov} and the Borel-Cantelli lemma that was applied at the end of the proof of the previous proposition, we find that for ${\sf GW}-\text{a.s.}$ every $T$, there exists $N'$ (possibly depending on $T$) such that for all $n\geq N'$ we have $\big|\{1\leq j<n:0<\hat{P}_T(v_j,v_{j+1})<p\}\big|<\frac{r}{2}(n-1)$ for all $v$ such that $|v|=n$.  Letting $m_T(v)$ and $N$ be as defined in Proposition \ref{prop:lbo1cir}, and observing that $$N_p(T,v)=m_T(v)-\big|\{1\leq j<n:0<\hat{P}_T(v_j,v_{j+1})<p\}\big|,$$we can now conclude that if we let $s=\frac{r}{2}$ and $N''=\text{max}\{N,N'\}$, then for ${\sf GW}-\text{a.s.}$ every $T$, we have $N_p(T,v)\geq s|v|$ for all $v$ with $|v|\geq N''$.  Hence, the proof is complete.
\end{proof}

\bigskip
\begin{proof}[Proof of Theorem \ref{theorem:diffbndp1p}]
To establish \eqref{limliminf1sin1} we couple simple random walk on $T$, where $T$ is an infinite tree with no leaves, with simple random walk on the non-negative integers as follows: Let $\{X^{(1)}_j\}$ and $\{X^{(2)}_j\}$ represent our coupled simple random walks on $T$ and $\{\mathbb{Z}\geq 0\}$ respectively.  If $X^{(2)}_n<|X^{(1)}_n|$, then the $(n+1)$th step of $\{X^{(2)}_j\}$ is independent of that of $\{X^{(1)}_j\}$.  If instead $X^{(2)}_n=|X^{(1)}_n|>0$, then if the $(n+1)$th step of $\{X^{(1)}_j\}$ is towards the root, we have the $(n+1)$th step of $\{X^{(2)}_j\}$ go towards $0$, and if the $(n+1)$th step of $\{X^{(1)}_j\}$ goes away from the root, then the $(n+1)$th step of $\{X^{(2)}_j\}$ goes away from $0$ with probability $\frac{\text{deg}(X^{(1)}_n)+1}{2\text{deg}(X^{(1)}_n)}$ and towards $0$ with probability $\frac{\text{deg}(X^{(1)}_n)-1}{2\text{deg}(X^{(1)}_n)}$.  Noting that in the case where $X^{(2)}_n=|X^{(1)}_n|>0$, $\{X^{(2)}_j\}$ travels away from the root on its next step with probability  $\frac{1}{2}$, and that the coupling guarantees that $X^{(2)}_j\leq |X^{(1)}_j|$ for all $j$, we see that for any $n$, $A$ we have \begin{equation}\label{compSRWozptSRWotnl}{\sf SRW}_T\Big(\underset{j\leq 2n}{\text{max}}|X_j|\leq \frac{1}{A}n^{1/3}\Big)\leq {\sf SRW}_{\mathbb{Z}\geq 0}\Big(\underset{j\leq 2n}{\text{max}}|X_j|\leq \frac{1}{A}n^{1/3}\Big)={\sf SRW}_{\mathbb{Z}}\Big(\underset{j\leq 2n}{\text{max}}|X_j|\leq \frac{1}{A}n^{1/3}\Big).\end{equation}Once again invoking Lemma \ref{lemma:t3mogl34g}, it now follows from \eqref{compSRWozptSRWotnl} that $${\sf SRW}_T\Big(\underset{j\leq 2n}{\text{max}}|X_j|\leq \frac{1}{A}n^{1/3}\Big)\leq e^{-\frac{\pi^2 A^2}{4}n^{1/3}(1+o(1))}.$$Combining this last inequality with Proposition \ref{prop:lbfrp}, we observe that since all $T$ generated by $Z$ are infinite and have no leaves, this means that if $A>\frac{2}{\pi}\sqrt{C_k}$, then \begin{equation}\label{maxn1o3cx0gwk}\underset{n\to\infty}{\text{lim}}{\sf SRW}_{{\bf T}}\Big(\underset{j\leq 2n}{\text{max}}|X_j|\leq\frac{1}{A}n^{1/3}\ \Big|\ X_{2n}={\bf 0}\Big)=0\ \ {\sf GW}^{(k)}-\text{a.s.}\end{equation}Finally, noting that because ${\bf T}^{(k)}$ is a Galton-Watson tree with offspring distribution $Z\cdot 1_{Z\leq k}$, and because $Z\cdot 1_{Z\leq k}$ converges in distribution to $Z$ (and thus its probability generating function converges pointwise to that of $Z$), it follows that \begin{equation}\label{embtbnddeg}\underset{k\to\infty}{\text{lim}}{\sf GW}(\mathscr{A}_k)=1.\end{equation}This observation, alongside \eqref{maxn1o3cx0gwk}, now completes the proof of \eqref{limliminf1sin1}.

To establish \eqref{limliminf1sin2}, and thus complete the proof of the theorem, start by letting $T$ be a tree in $\mathscr{A}_k$ (for some $k<\infty$ with ${\bf E}[Z\cdot 1_{Z\leq k}]>1$) that satisfies the equality in Proposition \ref{prop:lbfrp}, and that satisfies Proposition \ref{prop:lbonpolt} for some $p>0$, $s>0$, and $N\in\mathbb{N}$.  Now if we take $A>0$, $n>0$ satisfying $A n^{1/3}\geq N$, then for any $v\in T$ with $|v|\geq A n^{1/3}$, the probability that simple random walk beginning at $v$ ever returns to the root will be bounded above by $e^{s\text{log}(1-p)|v|}\leq e^{s\text{log}(1-p)A n^{1/3}}$.  Hence, for each $j<2n$ we have \begin{align}\label{impbndfr16}&{\sf SRW}_T\Big(\{|X_j|\geq A n^{1/3}\}\cap\{X_{2n}={\bf 0}\}\Big)\leq e^{s\text{log}(1-p)A n^{1/3}} \nonumber\\ &\implies{\sf SRW}_T\bigg(\{\underset{j\leq 2n}{\text{max}}|X_j|\geq A n^{1/3}\}\cap\{X_{2n}={\bf 0}\}\bigg)\leq 2n e^{s\text{log}(1-p)A n^{1/3}}.\end{align}Selecting $A$ large enough so that $|s\text{log}(1-p)|A>C_k$, it now follows from \eqref{impbndfr16} and our assumption that $T$ satisfies the equality in Proposition \ref{prop:lbfrp}, that \begin{equation}\label{limforpreq10}\underset{n\to\infty}{\text{lim}}\ {\sf SRW}_T\bigg(\underset{j\leq 2n}{\text{max}}|X_j|\geq A n^{1/3}\ \Big|\ X_{2n}={\bf 0}\bigg)=0.\end{equation}Noting that it follows from \eqref{maxn1o3cx0gwk} and Proposition \ref{prop:lbonpolt} that ${\sf GW}-\text{a.s.}$ every tree satisfies the conditions we imposed on $T$, we can now conclude that for ${\sf GW}-\text{a.s.}$ every $T$ there exists $A<\infty$ (which can depend on $T$) such that \eqref{limforpreq10} holds, thus establishing \eqref{limliminf1sin2} and completing the proof of the theorem. 
\end{proof}

\section{Case 1(b): ${\bf P}(Z=0)>0$}

\begin{theorem}\label{brbnd0c}
If the offspring distribution $Z$ is supercritical and satisfies ${\bf P}(Z=0)>0$, and there exists $\delta>0$ such that ${\bf E}[Z^{1+\delta}]<\infty$, then \begin{equation}\label{ubnd0c}\underset{A\to\infty}{\emph{lim}}\Bigg[\underset{n\to\infty}{\emph{liminf}}\ {\sf SRW}_{\bf{T}}\bigg(\frac{1}{A}n^{1/3}\leq\underset{j\leq 2n}{\emph{max}}|X_j|\leq An^{1/3}\ \Big|\ X_{2n}={\bf 0}\bigg)\Bigg]=1\ \ {\sf GW}-\emph{a.s.}\end{equation}conditioned on non-extinction.
\end{theorem}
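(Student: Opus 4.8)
The plan is to strip a surviving tree $T$ down to an infinite leafless ``backbone'' carrying finite ``bushes'', to reduce both inequalities in \eqref{ubnd0c} to the leafless situation already settled in Section~2, and then to bound the time the walk loses inside the bushes. On the survival event, let $\hat T$ be the backbone of $T$ obtained by iteratively deleting all finite subtrees, so that $\hat T\in\mathcal T_{\mathcal I}$, and attach to each $v\in\hat T$ the finite bush $T^f_v$ formed by the deleted subtrees hanging off $v$. Under ${\sf GW}(\cdot\mid\text{survival})$ the backbone is, up to its root, a Galton--Watson tree with offspring law $\hat Z$, and since $Z$ is supercritical with ${\bf P}(Z=0)>0$ one checks that $0<{\bf P}(\hat Z=1)<1$ and ${\bf E}[\hat Z^{1+\delta}]<\infty$, so Theorem~\ref{theorem:diffbndp1p} and each ingredient of Section~2 apply to $\hat T$; moreover ${\sf GW}\times{\sf SRW}_{\bf T}$ can be realized so that the bushes are i.i.d.\ copies of ${\bf T}$ conditioned to go extinct, which I will call ${\bf T}^f$. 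Finally, let $Y_0,Y_1,\dots$ be the trace of $\{X_j\}$ on $\hat T$, advancing only at the times $\{X_j\}$ performs a ``backbone move'' (a step between two backbone vertices); since conditioned on a step being a backbone move the walk picks each backbone-neighbour of its current backbone vertex with equal probability, $Y$ is ${\sf SRW}_{\hat T}$ run for $L_{2n}$ steps, where $L_{2n}$ is the number of backbone moves made by time $2n$, and $\max_{j\le 2n}|X_j|\ge\max_{i\le L_{2n}}|Y_i|$.

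For the upper bound $\max_{j\le 2n}|X_j|\le An^{1/3}$, I would follow Section~2 almost verbatim. Propositions~\ref{prop:lbo1cir} and \ref{prop:lbonpolt} carry over to $\hat T$ — backbone vertices of degree at least $2$ supply the escape probability, and attaching dead-end bushes only makes it harder for the walk to reach ${\bf 0}$ — so from any $v$ with $|v|\ge An^{1/3}$ the walk on $T$ ever visits ${\bf 0}$ with probability at most $e^{-c_1An^{1/3}}$, and a strong-Markov union bound gives ${\sf SRW}_T\big(\max_{j\le 2n}|X_j|\ge An^{1/3},\,X_{2n}={\bf 0}\big)\le 2n\,e^{-c_1An^{1/3}}$. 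For the matching lower bound ${\sf SRW}_T(X_{2n}={\bf 0})\ge e^{-Cn^{1/3}}$ I would rerun the trap argument of Proposition~\ref{prop:lbfrp}: the reduced-tree analogue of Lemma~\ref{lemma:maxdepthline} still supplies, at backbone-distance $\asymp n^{1/3}$, a pipe of backbone-degree-$1$ vertices of length $\asymp n^{1/3}$, the walk travels there, performs a Mogulskii-confined backbone excursion (Lemma~\ref{lemma:t3mogl34g}) and returns; the one new feature is that each backbone step now drags an excursion into the attached bush, so the clock runs at rate $1+O(1)$, the $O(1)$ bound on mean bush-time per backbone step being exactly what the exponential-moment estimates below provide. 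Choosing $A$ with $c_1A>C$ gives ${\sf SRW}_T\big(\max_{j\le 2n}|X_j|\ge An^{1/3}\mid X_{2n}={\bf 0}\big)\to 0$, ${\sf GW}$-a.s.

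For the lower bound $\max_{j\le 2n}|X_j|\ge\frac{1}{A}n^{1/3}$ the reduction above shows it is enough to prove that, for some fixed $c>0$, one has $L_{2n}\ge cn$ with ${\sf SRW}_T$-probability tending to $1$ after conditioning on $\{X_{2n}={\bf 0}\}$. Indeed, $$ {\sf SRW}_T\Big(\max_{j\le 2n}|X_j|\le\tfrac{1}{A}n^{1/3},\,X_{2n}={\bf 0}\Big)\le{\sf SRW}_T\Big(\max_{i\le cn}|Y_i|\le\tfrac{1}{A}n^{1/3}\Big)+{\sf SRW}_T\big(L_{2n}<cn\big), $$ and the coupling \eqref{compSRWozptSRWotnl} together with Lemma~\ref{lemma:t3mogl34g} bounds the first term by $e^{-c_2A^2n^{1/3}(1+o(1))}$, so after dividing by ${\sf SRW}_T(X_{2n}={\bf 0})\ge e^{-Cn^{1/3}}$ it vanishes once $c_2A^2>C$. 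The remaining task is to show ${\sf SRW}_T(L_{2n}<cn)\le e^{-c_3n}$, i.e.\ that the finite bushes cannot absorb all but $o(n)$ of the walk's time. Here I would use Lemma~\ref{lemma:SRWrtft}, bounding (inductively on the tree) an exponential moment of the return time of ${\sf SRW}$ on a finite tree $T^f$ in terms of a size statistic $r(T^f)$, and Lemma~\ref{lemma:rficem}, bounding an exponential moment of $r({\bf T}^f)$ under ${\sf GW}$; together they show that the numbers and durations of the bush excursions between two consecutive backbone moves have exponential moments under the annealed law ${\sf GW}\times{\sf SRW}_{\bf T}$ — uniformly, once the rare high-degree backbone vertices are accounted for using ${\bf E}[Z^{1+\delta}]<\infty$. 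A Chernoff-type estimate then yields $({\sf GW}\times{\sf SRW}_{\bf T})(L_{2n}<cn)\le e^{-c_3n}$, which is the content of Proposition~\ref{prop:3ineqforlb}; a routine Markov-plus-Borel--Cantelli step upgrades this to the quenched bound ${\sf SRW}_T(L_{2n}<cn)\le e^{-c_3n/2}$ for ${\sf GW}$-a.e.\ $T$ and all large $n$, and since $e^{-c_3n}\ll e^{-Cn^{1/3}}$ the conditioning on $\{X_{2n}={\bf 0}\}$ is harmless. Assembling the three parts gives \eqref{ubnd0c} on the survival event.

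The step I expect to be the real obstacle is exactly the annealed bound $({\sf GW}\times{\sf SRW}_{\bf T})(L_{2n}<cn)\le e^{-c_3n}$ and its quenched counterpart — controlling how much time the walk squanders in the finite bushes — since the excursion increments are neither independent nor uniformly exponentially integrable (high-degree vertices carry many bushes), so one needs the inductive control of return times on finite trees, the exponential integrability of the random bush size, and a careful annealed-to-quenched transfer; it is this, and not anything in the upper bound, that makes the ${\bf P}(Z=0)>0$ regime harder than Section~2.
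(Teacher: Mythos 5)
The overall plan is sensible and the upper bound sketch is on the right track, but the lower bound has a genuine gap in the crucial step you yourself flag as the obstacle. Your decomposition drops the confinement event from the second term: you bound ${\sf SRW}_T\big(\max_{j\le 2n}|X_j|\le\frac1A n^{1/3},\,X_{2n}={\bf 0}\big)$ by ${\sf SRW}_T\big(\max_{i\le cn}|Y_i|\le\frac1A n^{1/3}\big)+{\sf SRW}_T(L_{2n}<cn)$ and then claim ${\sf SRW}_T(L_{2n}<cn)\le e^{-c_3 n}$. That estimate is false, and not just by a constant. For ${\sf GW}$-a.e.\ $T$ the maximal bush size within backbone distance $D$ of the root grows linearly in $D$ (this is the content of Lemma~\ref{lemma:rficem} together with a Borel--Cantelli argument, cf.\ \eqref{lbrgwasem}), and Lemma~\ref{lemma:SRWrtft} gives an escape-time tail of order $e^{-t/|T^f|^2}$ from a bush of size $|T^f|$. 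So the walk can random-walk out to backbone distance $D$, locate a bush of size $\Theta(D)$, and remain in it for time $\Theta(n)$; this makes $L_{2n}=O(D)$ while using up the whole time budget, and optimizing $\mu^{-D}\cdot e^{-n/D^2}$ over $D$ puts the probability at $e^{-\Theta(n^{1/3})}$, not $e^{-\Theta(n)}$. Crucially, the implied constant in that $\Theta(n^{1/3})$ does not increase as you shrink $c$ (equivalently as $A\to\infty$), so after dividing by ${\sf SRW}_T(X_{2n}={\bf 0})\approx e^{-C_k n^{1/3}}$ the ratio need not tend to $0$. This is also why Proposition~\ref{prop:3ineqforlb} is \emph{not} the statement you attribute to it: each of the three sums \eqref{3ineqgwas}--\eqref{3ineqgwas3} retains the confinement event $\{\max_{j<i}|Y_j|\le\delta n^{1/3}\}$ inside the probability. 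That confinement is what caps the bush sizes encountered at $O(\delta n^{1/3})$, making the relevant excursion-time exponent of order $n^{1/3}/\delta^2$, which does blow up as $\delta\to 0$. The related assertion that the excursion increments are ``uniformly exponentially integrable'' after discounting high-degree backbone vertices is also too strong: the admissible $\lambda$ in Lemma~\ref{lemma:SRWrtft} scales like $|T^f|^{-2}$, so the exponential moments degrade with bush size and are only controlled once the bush sizes are themselves controlled by confinement, via the functional $S(i)$ (and the extra $W(i)$ bookkeeping to handle visits to branching vertices, backed by Lemma~\ref{lemma:bndnravutn}).

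Two secondary remarks. First, your lower bound on ${\sf SRW}_T(X_{2n}={\bf 0})$ proposes using a backbone pipe and then arguing the clock only slows by a constant factor due to attached bushes; the paper instead builds the trap from vertices with exactly $m$ children all but one of which are leaves (the quantity $h_T(v)$ of Lemma~\ref{lemma:mdmc0c}), so that the walk can be forced into a genuine one-dimensional corridor with no bush excursions at all and Lemma~\ref{lemma:t3mogl34g} applies directly. Your route is not wrong in principle but adds an extra layer of time-change control to a step the paper avoids entirely. Second, in the upper bound one also needs to handle the portion of $|X_j|$ accumulated inside a bush below $T^i$; the paper disposes of this separately by showing $\mathcal H_n({\bf T})=O(n)$ a.s.\ (see \eqref{ubplftlngcn} and the reduction to \eqref{ncptt}), a point your sketch glosses over, though it is routine. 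The substantive issue, and the one you would need to repair before this proof could go through, is the retained confinement in the Proposition~\ref{prop:3ineqforlb}-type estimate.
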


\bigskip
\noindent
The proof of the upper bound in \eqref{ubnd0c} strongly resembles that of \eqref{limliminf1sin2}, and relies primarily on a pair of results (Lemma \ref{lemma:mdmc0c} and Proposition \ref{prop:lbfrp2}) which are the analogs of Lemma \ref{lemma:maxdepthline} and Proposition \ref{prop:lbfrp} for the case where ${\bf P}(Z=0)>0$.  By contrast, the proof of the lower bound turns out to be far more difficult than that of \eqref{limliminf1sin1} due to the existence of finite subtrees, and thus will require a considerable amount of additional work.  Elaborating slightly on the description of the proof given in the introduction, the key steps can be organized as follows: 

\begin{enumerate}
\item We begin with Lemmas \ref{lemma:SRWrtft} and \ref{lemma:rficem}, which examine the exponential moments of random walk return times on finite trees, as well as the sizes of such trees.

\item These results are then used to deal with perhaps the most challenging step, which is Proposition \ref{prop:3ineqforlb}, where we establish the existence of asymptotic bounds for three sums, each of which relates to the probability that the lower bound in \eqref{ubnd0c} is not satisfied.

\item From here the remainder of the proof is fairly straight forward, and consists mainly of showing that the events analyzed in \ref{prop:3ineqforlb} constitute, in essence, a deconstruction of the event that the lower bound in \eqref{ubnd0c} fails to apply.  Comparing this probability to the probability that the walk returns to the root at time $2n$ (approximated in Proposition \ref{prop:lbfrp2}) then allows us to complete the proof.
\end{enumerate}

In order to state the first two results that we'll prove in this section, which are the analogs of Lemma \ref{lemma:maxdepthline} and Proposition \ref{prop:lbfrp} referenced above, we introduce the following definitions: Let $m:=\text{min}\{j>0:{\bf P}(Z=j)>0\}$, and for any rooted tree $T$ and vertex $v\in T$ define $h_T(v)$ to be the length of the longest path $v=v_0,v_1,\dots,v_n$ going away from the root such that for every $j<n$ the vertex $v_j$ has exactly $m$ children (i.e.~deg$(v_j)=m$) and all of them except $v_{j+1}$ are leaves (when $j=n-1$ the vertex $v_{j+1}$ is also permitted to be a leaf).  In addition, for every $n\geq 0$ and $k\geq m$, let $H_{n,k}(T):=\text{max}\{h_T(v):|v|=n,\ \text{deg}(u)\leq k\ \forall\ u<v\}$.  Any additional terms in the following lemma and proof which also appeared in Lemma \ref{lemma:maxdepthline} are assumed to carry the same definitions as earlier, unless we state otherwise.

\medskip
\begin{lemma}\label{lemma:mdmc0c}
Let $\mathscr{A}_k$ be defined as in Lemma \ref{lemma:maxdepthline}, except trees in $\mathscr{A}_k$ are now permitted to have leaves, and define $\sigma:=\frac{\emph{log}(1/\mu)}{\emph{log}\ \rho}$, where this time $\rho:={\bf P}(Z=m)\cdot m\cdot{\bf P}(Z=0)^{m-1}$.  Then for every $\epsilon>0$, there exists a value $N_{\epsilon}$ such that for $k\geq N_{\epsilon}$ we have \begin{equation}\label{hnkrlb}\underset{n\to\infty}{\emph{liminf}}\frac{H_{n,k}({\bf T})}{\sigma n}\geq 1-\epsilon\ \ {\sf GW}-\emph{a.s}.\end{equation}conditioned on $\mathscr{A}_k$.
\end{lemma}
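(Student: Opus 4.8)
The plan is to run the argument of Lemma~\ref{lemma:maxdepthline} essentially verbatim, the only change being that the role played there by maximal chains of degree-$1$ vertices is now played by the traps counted by $h_T$: maximal paths $v_0,v_1,\dots$ in which each interior $v_j$ has exactly $m$ children, of which exactly $m-1$ are leaves. As there, fix $k$ with $k\ge m$ and ${\bf E}[Z\,1_{Z\le k}]>1$, form $T^{(k)}$, and let $\tilde T^{(k)}$ be the sub-tree of $T^{(k)}$ consisting of the vertices that root a surviving subtree; write $Z^{(k)}_n(T)$ for the size of the $n$th generation of $\tilde T^{(k)}$, put ${\sf GW}^{(k)}:={\sf GW}(\cdot\mid\mathscr{A}_k)$, and set $H^*_{n,k}:=\text{max}\{h_T(v):|v|=n,\ v\in\tilde T^{(k)}\}$, so that $H^*_{n,k}\le H_{n,k}$.

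The one genuinely new computation is the analogue of \eqref{bndingpl1c}. Conditionally on a vertex $v$ lying in $\tilde T^{(k)}$, consider the event that $\text{deg}(v)=m$, that exactly $m-1$ of the children of $v$ are leaves, and that the remaining (``distinguished'') child again lies in $\tilde T^{(k)}$. This event has conditional probability exactly $\rho={\bf P}(Z=m)\cdot m\cdot{\bf P}(Z=0)^{m-1}$: in the unconditional probability, the factor ${\bf P}(Z=m)$ gives $\text{deg}(v)=m$, the factor $m$ counts the choice of which child is distinguished, the factor ${\bf P}(Z=0)^{m-1}$ forces the other $m-1$ children to be leaves, and there is a further factor equal to the $T^{(k)}$-survival probability of the distinguished child; this last factor cancels against the conditioning event $\{v\in\tilde T^{(k)}\}$ (on the described configuration $v$ itself automatically survives), leaving exactly $\rho$ — precisely the cancellation that produces $\rho={\bf P}(Z=1)$ in the degree-$1$ computation behind Lemma~\ref{lemma:maxdepthline}. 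Iterating $\ell$ times along the forced continuation, noting that the resulting event is contained in $\{h_T(v)\ge\ell\}$, and that these events are independent across the $Z^{(k)}_n$ vertices of generation $n$ of $\tilde T^{(k)}$ (they are determined by disjoint subtrees), yields ${\sf GW}^{(k)}\big(H^*_{n,k}<\ell\mid Z^{(k)}_n\big)\le\big(1-\rho^\ell\big)^{Z^{(k)}_n}$.

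From here everything proceeds as in Lemma~\ref{lemma:maxdepthline}. For $r>1$ and $0<c<\frac{\text{log}(1/r)}{\text{log}\,\rho}$ the bound $(1-\rho^{cn})^{r^n}\le e^{-(r\rho^c)^n}$ is summable, so Borel--Cantelli gives ${\sf GW}^{(k)}\big(\{H^*_{n,k}<cn\}\cap\{Z^{(k)}_n>r^n\}\ \text{i.o.}\big)=0$; and since the offspring distribution of $\tilde T^{(k)}$ is bounded by $k$, the Kesten--Stigum theorem gives $Z^{(k)}_n/\tilde\mu_k^{\,n}\to W>0$ ${\sf GW}^{(k)}$-a.s., where $\tilde\mu_k$ denotes the mean of that offspring distribution. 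Combining the two, $\underset{n\to\infty}{\text{liminf}}\,H_{n,k}/n\ge\underset{n\to\infty}{\text{liminf}}\,H^*_{n,k}/n\ge\frac{\text{log}(1/\tilde\mu_k)}{\text{log}\,\rho}$, i.e.\ $\underset{n\to\infty}{\text{liminf}}\,H_{n,k}/(\sigma n)\ge\frac{\text{log}(1/\tilde\mu_k)}{\text{log}(1/\mu)}$. Finally, since reducing a Galton--Watson tree to its tree of infinite lines of descent leaves the offspring mean unchanged (a short generating-function computation), $\tilde\mu_k={\bf E}[Z\,1_{Z\le k}]$, which increases to $\mu$ as $k\to\infty$ by monotone convergence (using ${\bf E}[Z]<\infty$). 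Hence $\frac{\text{log}(1/\tilde\mu_k)}{\text{log}(1/\mu)}\to1$, and choosing $N_\epsilon$ large enough that this ratio exceeds $1-\epsilon$ (and that $N_\epsilon\ge m$ and ${\bf E}[Z\,1_{Z\le N_\epsilon}]>1$) completes the proof.

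As Remark~1 anticipates, there is no new conceptual difficulty; the only step requiring care is the identification of the per-step trap-extension probability $\rho$ — getting the combinatorial factor $m$ and the leaf factor ${\bf P}(Z=0)^{m-1}$ right, and checking that the conditioning on $\tilde T^{(k)}$-survival of the distinguished child (which is exactly what allows the recursion to be iterated) contributes a factor that cancels. Everything else transfers directly from Lemma~\ref{lemma:maxdepthline}.
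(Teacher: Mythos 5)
Your proposal is correct and follows essentially the same route as the paper: define $H^*_{n,k}$ over surviving vertices, establish the per-vertex bound $\bigl(1-\rho^{\ell}\bigr)^{Z^{(k)}_n}$ (the paper actually has equality here, which is the same as your $\le$ combined with the easy reverse containment), apply Borel--Cantelli together with the Kesten--Stigum lower bound on $Z^{(k)}_n$, and let $k\to\infty$. You spell out two things the paper leaves implicit — the cancellation of the survival probability in the conditional per-step trap probability that identifies $\rho={\bf P}(Z=m)\,m\,{\bf P}(Z=0)^{m-1}$, and the monotone-convergence argument showing $\tilde\mu_k\uparrow\mu$ so that $\frac{\log(1/\tilde\mu_k)}{\log(1/\mu)}\to1$ — but these are details, not a different approach.
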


\begin{proof}
Setting $H^*_{n,k}=\text{max}\{h_T(v):|v|=n,\ v\in \tilde{T}^{(k)}\}$, and noting that ${\sf GW}^{(k)}(Z_1=m,\ Z_2=1)=\rho$, we observe that for $\ell>0$, \begin{equation}\label{hnkscbk}{\sf GW}^{(k)}\big(H^*_{n,k}<\ell\ |\ Z^{(k)}_n\big)=\Big(1-\rho^{\ell}\Big)^{Z^{(k)}_n}.\end{equation}As in the proof of Lemma \ref{lemma:maxdepthline}, we now observe that it follows from \eqref{hnkscbk} and the Borel-Cantelli lemma that for $r>1$ and $0<c<\frac{\text{log}(1/r)}{\text{log}\ \rho}$, we have \begin{equation}\label{pkhnkintk0}{\sf GW}^{(k)}\Big(\{H_{n,k}^*<cn\}\cap\{Z_n^{(k)}>r^n\}\ \text{i.o.}\Big)=0.\end{equation}Noting that it once again follows from the Kesten-Stigum Theorem that for $r<	\tilde{\mu}_k$ we have ${\sf GW}^{(k)}(Z_n^{(k)}\leq r^n\ \text{i.o.})=0$, \eqref{pkhnkintk0} and the fact that $H_{n,k}\geq H_{n,k}^*$ now allow us to conclude that \begin{equation}\label{limirhnklb}\underset{n\to\infty}{\text{liminf}}\ \frac{H_{n,k}}{\sigma n}\geq\frac{\text{log}(1/	\tilde{\mu}_k)}{\text{log}(1/\mu)}\ \ {\sf GW}^{(k)}-\text{a.s.},\end{equation}thus completing the proof.
\end{proof}

\medskip
The following proposition is an exact restatement of Proposition \ref{prop:lbfrp}.  However, it is now being proven for the case where ${\bf P}(Z=0)>0$.  Since the proof only requires slight modifications to that of Proposition \ref{prop:lbfrp}, a number of details will be omitted.

\medskip
\begin{prop}\label{prop:lbfrp2}
Let $k$ satisfy ${\bf E}[Z\cdot 1_{Z\leq k}]>1$.  Then there exists $C_k>0$ such that for ${\sf GW}^{(k)}-\emph{a.s.}$ every tree $T$,$$\underset{n\to\infty}{\emph{liminf}}\ \frac{{\sf SRW}_T(X_{2n}={\bf 0})}{e^{-C_k n^{1/3}}}=\infty.$$
\end{prop}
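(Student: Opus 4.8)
The plan is to mimic the proof of Proposition \ref{prop:lbfrp} almost verbatim, substituting the structure ``long line of degree-$m$ vertices with all-but-one children being leaves'' for the structure ``long pipe of degree-$1$ vertices'' used in the $\mathbf{P}(Z=0)=0$ case. First I would fix $k$ with $\mathbf{E}[Z\cdot 1_{Z\leq k}]>1$ and, invoking Lemma \ref{lemma:mdmc0c}, pass to the $\mathsf{GW}^{(k)}$-a.s.\ set of trees $T\in\mathscr{A}_k$ for which $\liminf_{n}H_{n,k}(T)/n\geq \log(1/\tilde\mu_k)/\log\rho$ (with $\rho$ now the constant from Lemma \ref{lemma:mdmc0c}, namely $\mathbf{P}(Z=m)\cdot m\cdot\mathbf{P}(Z=0)^{m-1}$). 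Setting $r_k:=\tfrac12\log(1/\tilde\mu_k)/\log\rho$, this guarantees that for all large $n$ there is a vertex $v_n$ with $|v_n|<n^{1/3}$, $\deg(u)\leq k$ for all $u<v_n$, and an ``$m$-ary trap'' of length $\geq r_k n^{1/3}$ hanging off $v_n$ — i.e.\ a path $v_n=u_0,u_1,\dots$ going away from the root along which each $u_j$ has exactly $m$ children, only $u_{j+1}$ of which is not a leaf.

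Next I would define the three events $B_{n,1},B_{n,2},B_{n,3}$ exactly as in Proposition \ref{prop:lbfrp}: $B_{n,1}$ is the event that the walk goes straight to $v_n$ in $|v_n|$ steps and then takes $\lceil\tfrac{r_k}{2}n^{1/3}\rceil$ further steps along the trap away from the root; $B_{n,2}$ is the event that from time $|v_n|+\lceil\tfrac{r_k}{2}n^{1/3}\rceil$ the walk stays within the first $2\lceil\tfrac{r_k}{2}n^{1/3}\rceil-2$ generations of descendants of $v_n$ along the trap until the last moment allowing a direct return; and $B_{n,3}=\{X_{2n}=\mathbf 0\}$. The only genuine change is in the one-step lower bounds: whereas before each forced step inside a degree-$1$ pipe had probability $1/2$, now a vertex $u_j$ of the trap has degree $m$, so from $u_j$ the probability of stepping to the (unique) non-leaf child $u_{j+1}$ is $\tfrac1{m+1}$, and the probability of stepping back toward the root is also $\tfrac1{m+1}$. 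Stepping onto any of the $m-1$ leaf children is a ``wasted'' excursion of length $2$ but does not change $|X\cdot|$ in a way that hurts the confinement event; more carefully, I would restrict $B_{n,2}$ to the event that the walk never steps onto a leaf during this phase, which only multiplies the probability by at most $(\tfrac2{m+1})^{2n}$ — too costly. The correct fix, which I would carry out, is to project the walk onto the backbone of the trap: the induced motion on $\{0,1,2,\dots\}$ obtained by watching only the backbone coordinate is, at interior backbone vertices, a lazy simple random walk that moves $\pm1$ each with probability $\tfrac1{m+1}$ and stays put with probability $\tfrac{m-1}{m+1}$. By a standard time-change, ${\sf SRW}_{T}(B_{n,2}\mid B_{n,1})$ is then bounded below by the probability that a lazy walk on $\mathbb Z$ confined to $[-(\text{const})n^{1/3},(\text{const})n^{1/3}]$ survives $2n$ steps, which by Lemma \ref{lemma:t3mogl34g} (applied after removing the laziness, costing only a constant factor in the effective time $n$) is again $\exp(-c\,n^{1/3}(1+o(1)))$ for a suitable constant depending on $r_k$ and $m$.

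Combining these, ${\sf SRW}_T(B_{n,1})\geq e^{-\log(k+1)(1+r_k)n^{1/3}}$ (each of the $|v_n|+\lceil\tfrac{r_k}2 n^{1/3}\rceil$ forced steps costs at least $\tfrac1{k+1}$ since $\deg\leq k$ along the way and the trap vertices have degree $m\leq k$), ${\sf SRW}_T(B_{n,3}\mid B_{n,1}\cap B_{n,2})\geq e^{-\log(k+1)(1+2r_k)n^{1/3}}$ by the same reasoning for the direct return, and ${\sf SRW}_T(B_{n,2}\mid B_{n,1})\geq e^{-c(r_k,m)n^{1/3}(1+o(1))}$ as above. Multiplying through \eqref{bndingp0wbns} (with $B_{n,i}$ now the new events) and choosing $C_k$ to be any constant strictly larger than $\log(k+1)(2+3r_k)+c(r_k,m)$ — say $C_k:=\log(k+1)(2+3r_k)+\tfrac{C'}{r_k^2}$ for a large enough universal $C'$ depending on $m$ — yields $\liminf_n {\sf SRW}_T(X_{2n}=\mathbf 0)/e^{-C_k n^{1/3}}=\infty$. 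Since the only property of $T$ we used beyond membership in $\mathscr{A}_k$ is the $\liminf$ bound on $H_{n,k}(T)$, which holds $\mathsf{GW}^{(k)}$-a.s., the proposition follows.

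The main obstacle is the one I flagged: unlike the $m=1$ case, an interior trap vertex now has $m-1$ leaf children into which the walk may fall, and naively forbidding those excursions is exponentially expensive in $n$. The resolution — observing that the backbone-projected walk is a lazy $\pm1$ walk with $O(1)$ laziness parameter, so that Mogulskii-type small-deviation asymptotics (Lemma \ref{lemma:t3mogl34g}) still give a stretched-exponential lower bound of the right order $n^{1/3}$ after a constant time-rescaling — is the one place requiring genuine thought beyond transcribing the earlier argument; everything else is a routine bookkeeping modification of Proposition \ref{prop:lbfrp}.
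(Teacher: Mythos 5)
Your proposal is correct and matches the paper's proof, which likewise reuses the event decomposition $B_{n,1},B_{n,2},B_{n,3}$ from Proposition \ref{prop:lbfrp} with the new $r_k$ and the $m$-ary traps of Lemma \ref{lemma:mdmc0c}. The paper dispatches the $B_{n,2}$ step more tersely, simply noting that since every non-leaf vertex in the trap has exactly one non-leaf child, the estimate ${\sf SRW}_T(B_{n,2}\mid B_{n,1})\geq{\sf SRW}_{\mathbb Z}\big(\max_{j\leq 2n}|X_j|\leq\lceil r_k n^{1/3}/2\rceil-1\big)$ carries over verbatim (because the backbone coordinate is a time-changed SRW making at most $2n$ effective moves in $2n$ steps, so confinement of SRW for $2n$ steps implies confinement of the walk in the trap); this is the same observation underlying your lazy-walk projection, just stated without the extra ``remove the laziness'' detour.
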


\begin{proof}
Let $T$ be a tree with $T\in \mathscr{A}_k$ and where \begin{equation}\label{lbhnknr}\underset{n\to\infty}{\text{liminf}}\ \frac{H_{n,k}(T)}{n}\geq \frac{\text{log}(1/	\tilde{\mu}_k)}{\text{log}\ \rho}\end{equation}(with $\rho$ defined as in Lemma \ref{lemma:mdmc0c}).  Now once again define $r_k:=\frac{\text{log}(1/	\tilde{\mu}_k)}{2\text{log}\ \rho}$ (now using the new definition of $\rho$) and observe that \eqref{limirhnklb} implies we can select a sequence of vertices $\{v_n\}$ in $T$ such that, for all $n$ sufficiently large, we have $|v_n|<n^{1/3}$, $\text{deg}(u)\leq k\ \forall\ u<v_n$, and $h_T(v_n)\geq r_k n^{1/3}$.  Next we define the events $B_{n,1}$, $B_{n,2}$, and $B_{n,3}$ in the same way as in the proof of Proposition \ref{prop:lbfrp} (except they are now defined in terms of the new definitions of $r_k$ and $\{v_n\}$).  The string of inequalities in \eqref{bndingp0wbns}, as well as the lower bounds for ${\sf SRW}_T(B_{n,1})$ and ${\sf SRW}_T(B_{n,3}|\underset{i\leq 2}{\cap}B_{n,i})$ given in the proof, clearly apply for the new $B_{n,i}$'s.  In addition, since every non-leaf vertex among the first $2\lceil{\frac{r_k}{2}\rceil}-2$ generations of descendants of $v_n$ has exactly one non-leaf child, the bound on ${\sf SRW}_T(B_{n,2}|B_{n,1})$ from the proof of Proposition \ref{prop:lbfrp} also continues to apply.  Hence, the proof is complete.
\end{proof}

\medskip
\begin{lemma}\label{lemma:SRWrtft}
Let $T$ be a finite rooted tree to which we attach additional vertices $v_1,\dots,v_n$, so that each is attached to the root of $T$ by an edge, and let $L$ represent the time it takes for simple random walk starting at ${\bf 0}$ to reach the set $\{v_1,\dots,v_n\}$.  If we let $|T|$ represent the number of vertices in $T$ (not including $v_1,\dots,v_n$), and we take $\lambda\leq \emph{min}\{\frac{n}{18|T|},\frac{n^2}{18|T|^2},\frac{1}{18|T(u_1)|^2},\dots,\frac{1}{18|T(u_m)|^2}\}$ (where $u_1,\dots,u_m$ represent the children of ${\bf 0}$), then \begin{equation}\label{ebndonrt}{\bf E}_{{\sf SRW}_T}\big[e^{\lambda L}\big]\leq e^{\lambda\big(\frac{5|T|}{n}+1\big)}.\end{equation}
\end{lemma}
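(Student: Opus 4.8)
The plan is to condition on the first step of the walk from ${\bf 0}$, turn $L$ into a geometric sum of independent sub‑excursions, read off a closed form for ${\bf E}_{{\sf SRW}_T}\!\big[e^{\lambda L}\big]$, and then prove the bound by induction on $|T|$. Write $u_1,\dots,u_m$ for the children of ${\bf 0}$ in $T$, so that in the enlarged graph ${\bf 0}$ has degree $m+n$. From ${\bf 0}$ the walk steps to one of the $v_i$ with probability $\tfrac{n}{m+n}$ (and then $L=1$) and to a given $u_j$ with probability $\tfrac{1}{m+n}$; in the latter case it makes an excursion inside $T(u_j)$ and first returns to ${\bf 0}$ after $\tau_j$ further steps, after which, by the strong Markov property, it restarts an independent copy of the whole problem. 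Hence, writing $\psi:={\bf E}_{{\sf SRW}_T}\!\big[e^{\lambda L}\big]$,
\[
\psi=\frac{n}{m+n}\,e^{\lambda}+\frac{e^{\lambda}}{m+n}\sum_{j=1}^{m}{\bf E}[e^{\lambda\tau_j}]\,\psi
\qquad\Longrightarrow\qquad
\psi=\frac{n\,e^{\lambda}}{\,m+n-e^{\lambda}\sum_{j=1}^{m}{\bf E}[e^{\lambda\tau_j}]\,},
\]
where the identity is used only after the denominator has been shown to be positive, which will fall out of the estimates below. The key observation is that $\tau_j$ is itself an instance of the quantity the lemma controls: it is the hitting time of the single pendant vertex ${\bf 0}$ for simple random walk started at the root $u_j$ of the finite tree $T(u_j)$, with ${\bf 0}$ attached to $u_j$ by one edge.

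I would then argue by induction on $|T|$. The base case $|T|=1$ is immediate, since then $L=1$ and ${\bf E}[e^{\lambda L}]=e^{\lambda}\le e^{\lambda(5/n+1)}$. For the inductive step, the hypothesis $\lambda\le\tfrac{1}{18|T(u_j)|^2}$ is exactly what licenses applying the induction hypothesis to $T(u_j)$ with a single pendant: it dominates $\tfrac{1}{18|T(u_j)|}$ and $\tfrac{1}{18|T(u_j)(w)|^2}$ for the grandchildren $w$ of ${\bf 0}$, which are precisely the constraints of the smaller instance, giving ${\bf E}[e^{\lambda\tau_j}]\le e^{\lambda(5|T(u_j)|+1)}$; when $u_j$ is a leaf I would instead use the exact value ${\bf E}[e^{\lambda\tau_j}]=e^{\lambda}$, since the inductive bound is too wasteful on very small subtrees. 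Substituting into the closed form, using $\sum_j|T(u_j)|=|T|-1$, and controlling the resulting second‑order terms with $\lambda|T|\le n/18$, $\lambda|T|^2\le n^2/18$ together with elementary estimates such as $e^x\le 1+x+x^2$ and $1-e^{-x}\ge x-x^2/2$ on the relevant small ranges, one reduces the target $\psi\le e^{\lambda(5|T|/n+1)}$ to a numerical inequality. The constant $5$ leaves a factor larger than $2$ of room over the true mean ${\bf E}[L]=1+(2|T|-2)/n$, and the three bounds on $\lambda$ (in particular the squared $|T(u_j)|$, which is what is needed to keep $\lambda^2$ times the fluctuation of an excursion below the available slack) are calibrated so that all the corrections fit inside this margin.

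The main obstacle is precisely this last step. The inductive estimate $e^{\lambda(5|T(u_j)|+1)}$ overshoots the true excursion moment by a constant factor and, worse, carries an additive overhead of order $1$ per child, so that when $T$ has many small subtrees these overheads threaten to swallow the whole $5|T|/n$ budget; this is why the smallest subtrees (in particular leaves) must be handled directly rather than recursively, and why the closing computation has to be done with some care rather than by crude term‑by‑term bounding. Put differently, the delicate point is to carry the induction through in a way that is robust to the number of children, exploiting the smallness of $\lambda$ forced by the hypotheses to absorb both the constant‑factor loss and the per‑child overhead.
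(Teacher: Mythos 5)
Your recursion is exactly the one in the paper (after conditioning on the first step), but the inductive step does not close the way you describe, and the obstruction is not confined to leaves. Suppose for the moment all $m$ children of ${\bf 0}$ are non-leaves and you plug the inductive bound ${\bf E}[e^{\lambda\tau_j}]\le e^{\lambda(5|T(u_j)|+1)}$ into the denominator. After cross-multiplying, the target becomes
\[
e^{\lambda}\sum_{j=1}^m {\bf E}[e^{\lambda\tau_j}] \;\le\; m+n-n\,e^{-5\lambda|T|/n}.
\]
Expanding to first order in $\lambda$ and using $\sum_j |T(u_j)|=|T|-1$, the left side is $m+\lambda\big(5|T|+2m-5\big)+O(\lambda^2)$ while the right side is $m+5\lambda|T|+O(\lambda^2)$, so the difference is $\lambda(2m-5)+O(\lambda^2)$. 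The $O(\lambda^2)$ terms have the wrong sign on both sides (they increase the left and decrease the right), so for $m\ge 3$ the inequality fails outright for every small $\lambda$ in the allowed range. Special-casing leaves (or any bounded family of small subtrees) only removes their $+1$ contributions, leaving the same first-order deficit whenever there are three or more non-leaf children; the problem is a systematic per-child overhead, not waste on small subtrees.

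The paper avoids this by first proving, for the single-pendant case $n=1$, the strictly stronger auxiliary bound $f_T(\lambda)\le e^{\lambda(4|T|-3)}$ by a separate induction (on the height of $T$, with an explicit computation at height $1$). With the constant $-3$ in place, each child contributes $e^{\lambda(4|T(u_j)|-2)}$ after absorbing the extra $e^{\lambda}$, and $\sum_j(4|T(u_j)|-2)=4|T|-4-2m$, which undershoots $4(|T|-1)$ by $2m$; that slack of $2m\lambda$ is precisely what absorbs the second-order corrections. Only then is the auxiliary $n=1$ bound substituted once into the general-$n$ recursion to produce $e^{\lambda(5|T|/n+1)}$. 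So the missing ingredient in your proposal is this auxiliary, tighter single-pendant estimate with a \emph{negative} affine offset; without it, the recursion as you have set it up cannot close for $m\ge 3$.
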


\begin{proof}
Denoting ${\bf E}_{{\sf SRW}_T}\big[e^{\lambda L}\big]$ as $f_{T,n}(\lambda)$ (recall $n$ is the number of additional leaves we've added to the root of $T$), we'll start by addressing the case where $n=1$.  The expression $f_T(\lambda)$ will often be used as shorthand for $f_{T,1}(\lambda)$.  The result we'll prove for the $n=1$ case, which is in fact slightly stronger than \eqref{ebndonrt}, is \begin{equation}\label{lbndemfn1}f_T(\lambda)\leq e^{\lambda(4|T|-3)}.\end{equation}To establish this, we first note that when $T$ just consists of a single vertex, \eqref{lbndemfn1} is immediate.  If instead $T$ has height $1$ (with $m$ children) then since in this case $L=2X-1$ (where $X$ is a geometric random variable with success probability $p=\frac{1}{m+1}$), we find that $$f_T(\lambda)=\frac{p e^{\lambda}}{1-(1-p)e^{2\lambda}}=\frac{\frac{1}{m+1}e^{\lambda}}{1-\frac{m}{m+1}e^{2\lambda}}=\frac{e^{\lambda}}{1-m(e^{2\lambda}-1)}.$$Expressing $\lambda$ as $\frac{\alpha}{m^2}$ (note the assumption $\lambda\leq \frac{n^2}{18|T|^2}$ implies $\alpha<\frac{1}{18}$) and using the above formula for $f_T(\lambda)$, we now observe that \begin{equation}\label{inforbndonft}f_T(\lambda)=\frac{e^{\frac{\alpha}{m^2}}}{1-m\underset{j\geq 1}{\sum}\frac{(2\alpha/m^2)^j}{j!}}\leq\frac{e^{\frac{\alpha}{m^2}}}{1-(\frac{2\alpha}{m}+\frac{12\alpha^2}{5m^3})}\leq\frac{e^{\frac{\alpha}{m^2}}}{e^{-(\frac{2\alpha}{m}+\frac{\alpha}{m^2})}}=e^{2\lambda|T|}\end{equation}(where the two inequalities in the middle are a consequence of the fact that $\alpha<\frac{1}{18}$).  Since the expression on the right in \eqref{inforbndonft} is less than $e^{\lambda(4|T|-3)}$ if $|T|\geq 2$, it follows that \eqref{lbndemfn1} holds whenever $T$ has height $1$.

To finish establishing \eqref{lbndemfn1}, we will induct on the height of $T$.  To do this, we first observe that if $T$ has height at least $2$, then \begin{equation}\label{eqforftfhal2}f_T(\lambda)=\frac{e^{\lambda}}{m+1}+\frac{e^{\lambda}}{m+1}f_T(\lambda)\sum_{j=1}^m f_{T(u_j)}(\lambda)\implies f_T(\lambda)=\frac{\frac{e^{\lambda}}{m+1}}{1-\frac{e^{\lambda}}{m+1}\underset{1\leq j\leq m}{\sum}f_{T(u_j)}(\lambda)}.\end{equation}Note that for the equality on the right to be valid, it is also required that the denominator in the last expression be positive.  However, given our assumptions that $T$ has height at least $2$ and $\lambda\leq\frac{1}{18|T|^2}$, the denominator in question will automatically be positive provided $f_{T(u_1)},\dots,f_{T(u_m)}$ all satisfy \eqref{lbndemfn1}.  Therefore, this issue can be addressed implicitly, in conjunction with the induction step, by establishing that \begin{equation}\label{indstforn1c}\frac{\frac{e^{\lambda}}{m+1}}{1-\frac{e^{\lambda}}{m+1}\underset{1\leq j\leq m}{\sum}e^{\lambda(4|T(u_j)|-3)}}\leq e^{\lambda(4|T|-3)}.\end{equation}Setting $\mathcal{N}:=|T|-1$ and $\mathcal{N}_j:=|T(u_j)|-1$, we first observe that \eqref{indstforn1c} can be expressed in the form\begin{equation}\label{refmubfis}1-\sum_{j=1}^m\Big(e^{(4\mathcal{N}_j+2)\lambda}-1\Big)\geq e^{-4\mathcal{N}\lambda}.\end{equation}Now using the fact that $\mathcal{N}=m+\underset{1\leq j\leq m}{\sum}\mathcal{N}_j$, along with the fact that $4\mathcal{N}_j+2<4(\mathcal{N}+1)=4|T|$, we see that \begin{align}\label{lbonlsinq}1-\sum_{j=1}^m\Big(e^{(4\mathcal{N}_j+2)\lambda}-1\Big)&=1-\sum_{j=1}^m\sum_{i=1}^{\infty}\frac{\big((4\mathcal{N}_j+2)\lambda\big)^i}{i!}=1-4\mathcal{N}\lambda+2m\lambda-\lambda\sum_{j=1}^m\sum_{i=2}^{\infty}\frac{(4\mathcal{N}_j+2)^i}{i!}\lambda^{i-1} \\ &\geq 1-4\mathcal{N}\lambda+2m\lambda-\lambda\sum_{j=1}^m\sum_{i=2}^{\infty}\frac{(4|T|)^i}{i!}\Big(\frac{1}{18|T|^2}\Big)^{i-1}\geq 1-4\mathcal{N}\lambda+\frac{3}{2}m\lambda.\nonumber\end{align}Combining this with the fact that $$e^{-4\mathcal{N}\lambda}\leq 1-4\mathcal{N}\lambda+8\mathcal{N}^2\lambda^2\leq 1-4\mathcal{N}\lambda+\frac{4}{9}\lambda\leq 1-4\mathcal{N}\lambda+\frac{3}{2}m\lambda$$ now establishes \eqref{refmubfis} (which is equivalent to \eqref{indstforn1c}), thus finishing the induction step, and therefore completing the proof of \eqref{lbndemfn1}.

For the general case, the result is again immediate if $T$ consists of just the root.  For $\text{height}(T)\geq 1$, we once again allow $\mathcal{N}$ and $\mathcal{N}_j$ to represent $|T|-1$ and $|T(u_j)|-1$ respectively, and start by noting that if $n\geq 2$, then we have \begin{align}\label{ftfgc}f_{T,n}(\lambda)&=\frac{n}{n+m}e^{\lambda}+\frac{e^{\lambda}}{n+m}f_{T,n}(\lambda)\sum_{j=1}^m f_{T(u_j)}(\lambda) \\ &\implies f_{T,n}(\lambda)=\frac{\frac{n}{n+m}e^{\lambda}}{1-\frac{e^{\lambda}}{n+m}\underset{1\leq j\leq m}{\sum}f_{T(u_j)}(\lambda)}\leq\frac{e^{\lambda}}{1-\frac{1}{n}\underset{1\leq j\leq m}{\sum}(e^{\lambda(4\mathcal{N}_j+2)}-1)}\nonumber\end{align}(where the last inequality follows from substituting the expression on the right in \eqref{lbndemfn1} for $f_{T(u_j)}(\lambda)$, and where the denominator in the last expression being positive follows from the bounds we've imposed on $\lambda$).  This now implies that it will suffice to show that \begin{equation}\label{frfubeml}1-\frac{1}{n}\sum_{j=1}^m(e^{\lambda(4\mathcal{N}_j+2)}-1)\geq e^{-\frac{5\mathcal{N}\lambda}{n}}.\end{equation}Now if we use the string of inequalities in \eqref{lbonlsinq}, and the fact that $\lambda\leq\frac{1}{18|T(u_j)|^2}$ for each $j$, then we see that the expression on the left in \eqref{frfubeml} is bounded below by $1-\frac{4\mathcal{N}\lambda}{n}+\frac{3m\lambda}{2n}$.  If we then combine this with the fact that $$e^{-\frac{5\mathcal{N}\lambda}{n}}\leq 1-\frac{5\mathcal{N}\lambda}{n}+\frac{25 \mathcal{N}^2\lambda^2}{2 n^2}\leq 1-\frac{4\mathcal{N}\lambda}{n}$$(this follows from the fact that $\lambda\leq\frac{n}{18|T|}$), we see that \eqref{frfubeml} follows, thus completing the proof of the lemma.
\end{proof}

\medskip
In order to make use of Lemma \ref{lemma:SRWrtft}, we will need to obtain some estimates related to the sizes of the finite trees that the random walk $\{X_n\}$ encounters on the random tree ${\bf T}$.  Therefore, the next result we present will address this issue.  To state this result, we need to define some additional notation.  First, for any infinite rooted tree $T$, let $T^i$ represent the tree we obtain by eliminating all vertices in $T$ that do not lie on infinite non-backtracking paths from the root to infinity, as well as all edges that touch these vertices (i.e. $T^i$ is the tree obtained by chopping off all of the bushes of $T$).  Now for any vertex $v\in T^i$, let $Z^i(v)$ represent the number of children that $v$ has in $T^i$.  Additionally, for $v\in T^i$, let $T^f$ represent the finite subtree rooted at $v$ (note that if all children of $v$ in $T$ are also in $T^i$, then $T^f$ simply consists of the single vertex $v$), let $Z^f(v)$ represent the size of the  first generation of $T^f(v)$ (i.e. the number of children $v$ has that are not in $T^i$), and denote $r(v):=\frac{|T^f(v)|}{Z^i(v)}$.

\medskip
\begin{lemma}\label{lemma:rficem}
If we let $\mathscr{A}$ represent the event that ${\bf T}$ does not go extinct and define ${\sf GW}_{\mathscr{A}}:={\sf GW}(\cdot|\mathscr{A})$, then the value $r({\bf 0})$ has exponential moments with respect to ${\sf GW}_{\mathscr{A}}$.
\end{lemma}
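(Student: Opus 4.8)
The plan is to understand the law of the pair $(Z^i({\bf 0}),|T^f({\bf 0})|)$ under ${\sf GW}_{\mathscr{A}}$ and show that $|T^f({\bf 0})|$ has exponential moments while $Z^i({\bf 0})\geq 1$ almost surely, which together force $r({\bf 0})=|T^f({\bf 0})|/Z^i({\bf 0})$ to have exponential moments. First I would recall the standard decomposition of a supercritical Galton--Watson tree conditioned on survival: letting $q:={\sf GW}(\text{extinction}\mid\text{one individual})<1$, the reduced tree ${\bf T}^i$ is itself a Galton--Watson tree whose offspring law is that of the number of surviving children, and conditionally on the full tree having $v$ surviving children and $w$ non-surviving children among its first generation, each non-surviving child is the root of an independent subtree distributed as ${\bf T}$ conditioned on extinction. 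Since $Z^f({\bf 0})$ is (conditionally on the first-generation data) a sum of i.i.d. copies of $|{\bf T}^{\,\text{ext}}|$, the size of a critical-or-subcritical-type conditioned tree, and since ${\bf T}$ conditioned on extinction is a Galton--Watson tree with the ``dual'' offspring distribution $\hat Z$ given by ${\bf P}(\hat Z=j)={\bf P}(Z=j)q^{j-1}$ (which is subcritical, with mean $\hat\mu<1$), the total progeny $|{\bf T}^{\,\text{ext}}|$ has exponential moments by the classical large-deviations bound for subcritical Galton--Watson total progeny (e.g. via the implicit functional equation for its generating function, which is analytic in a neighborhood of $1$).

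Next I would control the number of summands. The number $Z^f({\bf 0})$ of non-surviving first-generation children is bounded above by $Z$, so ${\bf E}_{{\sf GW}_{\mathscr{A}}}[e^{\theta\,Z^f({\bf 0})}]\leq \text{const}\cdot{\bf E}[e^{\theta Z}]$ for small $\theta$ — but here I cannot assume exponential moments of $Z$, only the $(1+\delta)$-moment hypothesis. So instead I would argue directly on $|T^f({\bf 0})|=1+\sum_{i=1}^{Z^f({\bf 0})}|{\bf T}^{\,\text{ext}}_i|$: condition on $Z=\ell$ and on the number $k\leq\ell$ of non-surviving children (which is Binomial$(\ell,q)$), and then $|T^f({\bf 0})|-1$ is a sum of $k$ i.i.d. copies of a random variable with finite exponential moment $g(\theta):={\bf E}[e^{\theta|{\bf T}^{\,\text{ext}}|}]<\infty$ for $\theta$ small. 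Summing over $k$ via the binomial gives, for each $\ell$, a conditional exponential moment of order $(1-q+q\,g(\theta))^{\ell}$; choosing $\theta$ small enough that $1-q+q\,g(\theta)\leq 1+\eta$ with $(1+\eta)<$ (radius of convergence issues aside) we then need $\sum_\ell {\bf P}(Z=\ell)(1+\eta)^\ell<\infty$. This is the delicate point, since only ${\bf E}[Z^{1+\delta}]<\infty$ is assumed, not a finite exponential moment of $Z$, so one cannot simply sum a geometric series against the law of $Z$.

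The resolution — and this is the step I expect to be the main obstacle — is to exploit the conditioning on $\mathscr{A}$ more carefully: under ${\sf GW}_{\mathscr{A}}$, large $Z$ is strongly correlated with having many \emph{surviving} children, i.e. with $Z^i({\bf 0})$ being large, so the ratio $r({\bf 0})=|T^f({\bf 0})|/Z^i({\bf 0})$ is tamer than $|T^f({\bf 0})|$ alone. Concretely, conditionally on $Z=\ell$, the pair $(Z^i({\bf 0}),Z^f({\bf 0}))$ is $(\ell-k,k)$ with $k\sim\text{Bin}(\ell,q)$ and (on $\mathscr A$) $\ell-k\geq 1$; so $r({\bf 0})\leq |T^f({\bf 0})|$ only when $Z^i=1$, and more generally $e^{\theta r({\bf 0})}=e^{\theta(1+\sum_{i=1}^{k}|{\bf T}^{\,\text{ext}}_i|)/(\ell-k)}$, and by convexity (Jensen / the fact that $x\mapsto e^{\theta x/j}$ applied to a sum of $k$ terms divided by $j$ is dominated, when $k\leq \ell$ and $j=\ell-k$, by a product of $e^{\theta|{\bf T}^{\,\text{ext}}_i|/j}$ with exponents summing to $\theta k/j$) one gets a conditional bound of the form $\big(1-q+q\,g(\theta k /(\ell-k))\big)$-type expressions that stay bounded because $g(s)=1+O(s)$ near $0$ and $k/(\ell-k)$ is typically $O(1)$ (concentrated near $q/(1-q)$). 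Making this rigorous requires a large-deviations estimate showing ${\bf P}(k\geq (q+\epsilon)\ell\mid Z=\ell)\leq e^{-c\epsilon^2\ell}$, so that the ``bad'' event $\{k/(\ell-k)$ large$\}$ contributes a term exponentially small in $\ell$ that is summable against ${\bf P}(Z=\ell)$ for \emph{any} tail, while on the complementary event $k/(\ell-k)$ is bounded by a constant $K_q$ and the conditional exponential moment of $r({\bf 0})$ is bounded by a constant depending only on $q$ and $\theta$. Summing these two contributions over $\ell$ against the law of $Z$ — using only that ${\bf P}(Z=\ell)$ is a probability mass function for the bounded piece and using the $e^{-c\epsilon^2\ell}$ decay for the unbounded piece — yields ${\bf E}_{{\sf GW}_{\mathscr{A}}}[e^{\theta r({\bf 0})}]<\infty$ for $\theta$ sufficiently small, which is the claim.
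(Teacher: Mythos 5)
Your plan follows the same route as the paper: establish exponential moments for $|{\bf T}|$ conditioned on extinction via the dual offspring law $\hat Z$, and then exploit that, conditionally on $Z=\ell$, the split into surviving and non-surviving children is $\mathrm{Bin}(\ell,1-q)$-distributed, so the ratio $Z^f/Z^i$ concentrates near $q/(1-q)$ and the tail of $r({\bf 0})$ is governed by a large-deviation event for a sum of i.i.d.\ copies of the extinction-conditioned total progeny. These are exactly the two ingredients the paper uses. (The paper proves the first ingredient via the functional iteration $F_n(x)=x\,h(F_{n-1}(x))$ with $h(x)=f(qx)/q$; you invoke analyticity of the total-progeny generating function near $1$, which amounts to the same thing.)

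There is, however, a gap in the step where you sum contributions. You aim directly at ${\bf E}_{{\sf GW}_{\mathscr{A}}}[e^{\theta r({\bf 0})}]$, and you assert that the bad event $\{k\geq(q+\epsilon)\ell\}$ ``contributes a term exponentially small in $\ell$.'' But the contribution of the bad event to a conditional expectation is ${\bf E}\bigl[e^{\theta r}\,{\bf 1}_{\mathrm{bad}}\mid Z=\ell\bigr]$, not ${\bf P}(\mathrm{bad}\mid Z=\ell)$, and these differ substantially here: on the extreme part of the bad event, say $Z^i=1$, one has $r=1+\sum_{i=1}^{\ell-1}|{\bf T}^{\mathrm{ext}}_i|$ and the conditional exponential moment is $e^{\theta}g(\theta)^{\ell-1}$, which is exponentially \emph{large} in $\ell$. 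To make your argument rigorous you would need to split the bad range of $k$ finely, pair each value $m$ of $k$ with the factor $e^{\theta/(\ell-m)}g(\theta/(\ell-m))^{m}$, and verify that it is dominated by the binomial mass $\binom{\ell}{m}q^m(1-q)^{\ell-m}$ for $\theta$ small enough (e.g.\ via $qg(\theta)<1$); that step is not in your sketch. The paper avoids this entirely by bounding the tail ${\sf GW}_{\mathscr{A}}(r({\bf 0})\geq n)$ rather than the moment: on the bad event $\{Z^f>\tfrac{n}{2\mu_o}Z^i\}$ the whole contribution \emph{is} a probability, which is exponentially small in $n$ by the Chernoff bound, while on the good event the remaining contribution is the probability that a sum of at most $\tfrac{n}{2\mu_o}Z^i$ i.i.d.\ copies of $|{\bf T}^{\mathrm{ext}}|$ exceeds $nZ^i$, an honest LDP event. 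Exponential moments then follow from the exponentially decaying tail. I'd recommend re-organizing your argument around the tail estimate so that the bad event is handled in one clean stroke.
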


\begin{proof}
Our first step will be to show that $|{\bf T}|$ has exponential moments with respect to the measure ${\sf GW}(\cdot|\mathscr{A}^c)$.  To do this, we start by noting that $Z'$ (the offspring distribution associated with ${\sf GW}(\cdot|\mathscr{A}^c)$) has probability generating function $h(x)=\frac{f(qx)}{q}$, where $q:={\sf GW}(\mathscr{A}^c)$ (see \cite{LP}, Proposition 5.28).  From this, we can then conclude that $Z'$ itself has exponential moments, and that $h'(1)<1$.  Now if we let $F_n(x)$ represent the probability generating function of $|{\bf T}_n|$ with respect to ${\sf GW}(\cdot|\mathscr{A}^c)$, then we find that $F_0(x)=x$ and, for every $n\geq 1$, $F_n(x)=x h(F_{n-1}(x))$.  Furthermore, because $Z'$ is subcritical, has exponential moments, and satisfies ${\bf P}(Z'\geq 2)>0$, it follows that there exists $x_o\in (1,\infty)$ where the quantity $\frac{x}{h(x)}$ attains its maximum value.  Now if we have a value $x$ such that $1<x<\frac{x_o}{h(x_o)}$, and we assume $F_{n-1}(x)<x_o$, then it will follow that $F_n(x)=x h(F_{n-1}(x))<\frac{x_o}{h(x_o)}\cdot h(x_o)=x_o$.  Since we know $x<\frac{x_o}{h(x_o)}<x_o$, it follows by induction that $F_n(x)<x_o\ \forall\ n\geq 1$.  Therefore, the function $F(x):=\underset{n\to\infty}{\text{lim}}F_n(x)$ (the generating function for $|\bf T|$ with respect to ${\sf GW}(\cdot|\mathscr{A}^c)$) is finite for $x<\frac{x_o}{h(x_o)}$, and thus $|{\bf T}|$ has exponential moments with respect to ${\sf GW}(\cdot|\mathscr{A}^c)$.

To complete the proof of the lemma, we now let $\mu_o={\bf E}[Z']$, and observe that for $n$ large enough so that $\frac{2\mu_o}{2\mu_o +n}<\frac{1-q}{2}$, we have \begin{equation}\label{ubrgwpgn}{\sf GW}_{\mathscr{A}}\big(r({\bf 0})\geq n\big)\leq {\sf GW}_{\mathscr{A}}\Big(Z^f>\frac{n}{2\mu_o}Z^i\Big)+{\sf GW}_{\mathscr{A}}\Big(|{\bf T}^f|\geq n Z^i\ \Big|\ Z^f=\lfloor{Z^i n/2\mu_o\rfloor}\Big)\end{equation}(where ${\bf T}^f$, $Z^f$, and $Z^i$ represent ${\bf T}^f({\bf 0})$, $Z^f({\bf 0})$, and $Z^i({\bf 0})$ respectively).  For the first term on the right in \eqref{ubrgwpgn}, we note that it is bounded above by $$\frac{1}{1-q}\sum_{j\geq 1+\lceil{n/\mu_o\rceil}}{\bf P}(Z=j){\sf GW}\Big(Z^i<\frac{2\mu_o}{n}Z^f\ \Big|\ Z=j\Big)\leq\frac{1}{1-q}\sum_{j\geq 1+\lceil{n/\mu_o\rceil}}{\bf P}(Z=j){\sf GW}\Big(Z^i<\frac{1-q}{2}Z\ \Big|\ Z=j\Big).$$Since the term on the right in the above inequality is clearly exponentially small in $n$, it now just remains to deal with the second term on the right in \eqref{ubrgwpgn}.  Noting that this expression is equal to $$\frac{\sum_{j=1}^{\infty}{\sf GW}\Big(\{|{\bf T}^f|\geq jn\}\cap\{Z^f=\lfloor{jn/2\mu_o\rfloor}\}\cap\{Z^i=j\}\Big)}{\sum_{j=1}^{\infty}{\sf GW}\Big(\{Z^f=\lfloor{jn/2\mu_o\rfloor}\}\cap\{Z^i=j\}\Big)},$$and that $|{\bf T}^f|=1+\underset{i\leq Z^f}{\sum}|{\bf T}(v_i)|$ (where the $v_i$'s represent the children of the root in ${\bf T}^f$), we see that it can be bounded above by \begin{equation}\label{bnd2ndtes}\underset{j\geq 1}{\text{max}}\ {\sf GW}\Big(|{\bf T}^f|\geq jn\ \Big|\ Z^f=\lfloor{jn/2\mu_o\rfloor}\Big)\leq\underset{j\geq 1}{\text{max}}\ {\bf P}\Bigg(\sum_{i\leq\lfloor{jn/2\mu_o\rfloor}}|{\bf T}_i|\geq j(n-1)\Bigg)\end{equation}(where the ${\bf T}_i$'s are i.i.d. with distribution ${\sf GW}(\cdot|\mathscr{A}^c)$).  Having already determined that $|{\bf T}|$ has exponential moments (with respect to ${\sf GW}(\cdot|\mathscr{A}^c))$, it follows from a basic application of large deviations that the expression on the right in \eqref{bnd2ndtes} is exponentially small in $n$.  Hence, this addresses the second term on the right in \eqref{ubrgwpgn}, and thus completes the proof.
\end{proof}

\medskip
\begin{lemma}\label{lemma:bndnravutn}
Let $T$ be an infinite rooted tree with no leaves, and let $v_o$ represent the first vertex in $T$ (i.e. the vertex closest to the root) where $T$ branches (it is assumed $T$ is not the infinite half-line).  Then there exists a constant $C_o>0$ (independent of $T$) such that for all $C>C_o$ we have $${\sf SRW}_T\Big(\underset{v>v_o}{\emph{max}}\big|\{j\leq n:X_j=v\}\big|>Cn^{2/3}\Big)=o\Big(e^{-\frac{C}{5}n^{1/3}}\Big).$$
\end{lemma}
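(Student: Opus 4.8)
The plan is to reduce the statement to a single per‑vertex estimate and then run a union bound that is kept under control by the trivial fact that the walk occupies at most $n+1$ distinct vertices in $n$ steps. Throughout write $L_n(v):=|\{j\le n:X_j=v\}|$, let $\Prob_v$ denote the law of simple random walk on $T$ started at $v$ (so $\Prob_{\bf 0}={\sf SRW}_T$), and let $\tau_v$ be the first hitting time of $v$. The central claim is a \emph{universal} tail bound: there are constants $C_1,c>0$ depending on neither $T$ nor $v$ such that for every non-root vertex $v$ and every $m\ge 1$,
$$\Prob_v\big(L_n(v)\ge m\big)\le C_1\,e^{-cm^2/n}.$$
By the strong Markov property the successive return times of the walk to $v$ are i.i.d.\ copies of the first return time $R$, so $\{L_n(v)\ge m\}\subseteq\{R_1+\cdots+R_{m-1}\le n\}$, hence for every $s>0$ one has $\Prob_v(L_n(v)\ge m)\le e^{sn}\,\E_v[e^{-sR}]^{m-1}$.

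The key input is a lower bound on the tail of $R$. While the walk sits in $T(v)\setminus\{v\}$ I would follow the distance $d_j:=d(X_j,v)$: since $T$, and hence every subtree $T(w)$, has no leaves, each such vertex has at least one child, so at every step $d_j$ moves towards $v$ with probability at most $\tfrac12$. A step‑by‑step coupling of $d_j$ with an ordinary simple random walk $S_j$ on $\mathbb Z$ started at $1$ then gives $d_j\ge S_j$ up to the first return to $v$, whence
$$\Prob_v(R>t)\ \ge\ \tfrac12\,\Prob\big(S\text{ stays positive for }t\text{ steps}\big)\ \ge\ \frac{c_0}{\sqrt t}$$
for a universal $c_0>0$ (the last inequality being the classical ballot/reflection estimate). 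Feeding this into $1-\E_v[e^{-sR}]\ge(1-e^{-1})\Prob_v(R\ge\lceil 1/s\rceil)\ge c_1\sqrt s$ for small $s$ gives $\E_v[e^{-sR}]\le e^{-c_1\sqrt s}$, and optimizing $e^{sn-c_1\sqrt s\,(m-1)}$ over $s$ produces the stretched‑Gaussian bound above.

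With that estimate the rest is bookkeeping. Conditioning on $\tau_v=t<n$ and using that running the walk longer only increases the local time, the strong Markov property gives
$$\Prob_{\bf 0}\big(L_n(v)>Cn^{2/3}\big)\le\Prob_{\bf 0}(\tau_v\le n)\cdot q_n,\qquad q_n:=\sup_{v}\Prob_v\big(L_n(v)\ge\lceil Cn^{2/3}\rceil\big)\le C_1e^{-cC^2n^{1/3}}.$$
Summing over $v>v_o$ and using $\sum_v\Prob_{\bf 0}(\tau_v\le n)=\E_{\bf 0}\big[\#\{v:\tau_v\le n\}\big]\le n+1$ yields
$$ {\sf SRW}_T\Big(\max_{v>v_o}L_n(v)>Cn^{2/3}\Big)\ \le\ (n+1)\,C_1\,e^{-cC^2n^{1/3}}.$$
Taking $C_o:=1/(5c)$, so that $cC^2>C/5$ for every $C>C_o$, the right‑hand side is $o\big(e^{-\frac C5 n^{1/3}}\big)$, which is the claim. (The per‑vertex estimate in fact holds at every non‑root vertex, so the hypothesis $v>v_o$ — with $v_o$ existing — serves only to ensure that the indexing set $\{v>v_o\}$ is nonempty, i.e.\ that $T$ is not the half‑line.)

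The main obstacle is the $T$‑uniform return‑time tail $\Prob_v(R>t)\gtrsim 1/\sqrt t$: a priori $v$ may be adjacent to vertices of arbitrarily high degree, and one must rule out the scenario in which the walk, after leaving $v$, typically bounces straight back and thereby accumulates local time quickly. The leafless assumption is exactly what excludes this — it forces the outward drift of $d(X_j,v)$ to dominate, so that $R$ is at least as heavy‑tailed as the return time of simple random walk on $\mathbb Z$ — and the stochastic‑domination coupling is what turns this into a clean quantitative bound with constants independent of $T$ and $v$. Once that is established, the union bound, the elementary ``at most $n+1$ distinct vertices'' count, and the choice of $C_o$ are all routine.
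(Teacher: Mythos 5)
Your proof is correct, and it reaches the paper's conclusion by a genuinely different route for the key per-vertex estimate, while sharing the same overall skeleton. Both arguments rest on two common ingredients: the leafless hypothesis forces the distance-to-$v$ process to move towards $v$ with probability at most $\tfrac12$ at every step, which dominates simple random walk on $\mathbb{Z}$; and a union bound over the at most $n+1$ distinct vertices the walk can visit in $n$ steps. Where you diverge from the paper is in how the local-time tail at a single vertex is obtained. The paper works on $\mathbb{Z}$ first: it reads off $\Prob(R_{\mathbb{Z}}>t)\sim t^{-1/2}/\sqrt{2\pi}$ from the generating function $1-\sqrt{1-x^2}$, observes that more than $Cn^{2/3}$ visits to the origin in $n$ steps forces at most $n^{1/3}$ of the first $\lfloor Cn^{2/3}\rfloor$ excursions to have length at least $n^{2/3}$, and bounds this via a lower tail of a $\mathrm{Bin}\bigl(\lfloor Cn^{2/3}\rfloor,n^{-1/3}/3\bigr)$ with Stirling, yielding the exponent $e^{-(C/4)n^{1/3}}$; a step-by-step coupling of the walk on $T(v_o)$ with a walk on $\mathbb{Z}$ then transports this bound to local times on $T$. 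You instead work directly on $T$: the same monotone coupling gives the return-time tail lower bound $\Prob_v(R>t)\ge c_0/\sqrt t$, from which you extract $\E_v[e^{-sR}]\le e^{-c_1\sqrt s}$ for small $s$ and apply an exponential Chebyshev bound to $R_1+\cdots+R_{m-1}\le n$, optimizing over $s$ to get $\Prob_v(L_n(v)\ge m)\le C_1e^{-cm^2/n}$, hence $e^{-cC^2n^{1/3}}$ at $m=Cn^{2/3}$. The two methods are of comparable length; yours is a bit more self-contained (no appeal to generating-function asymptotics for the return-time coefficients) and in fact yields a sharper dependence on $C$ (quadratic rather than linear in the exponent), although of course either rate is more than adequate for the stated $o\bigl(e^{-(C/5)n^{1/3}}\bigr)$. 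One small bookkeeping point worth recording explicitly if you write this up: the optimizing $s^*=\bigl(c_1(m-1)/2n\bigr)^2$ must lie in the range where $\E_v[e^{-sR}]\le e^{-c_1\sqrt s}$ is valid (i.e.\ $s$ small), which for $m=Cn^{2/3}$ holds for all large $n$; the constant $C_1$ absorbs the finitely many exceptional $n$.
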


\begin{proof}
We begin by examining simple random walk on $\mathbb{Z}$, noting that the random variable consisting of the time of the first return to $0$ has probability generating function $f(x)=1-\sqrt{1-x^2}$.  Looking at the asymptotics of the coefficients of $f$, we can conclude that $${\sf SRW}_{\mathbb{Z}}\Big(\text{min}\{j>0:X_j=0\}\geq n\Big)=\frac{n^{-1/2}}{\sqrt{2\pi}}+o\big(n^{-1/2}\big),$$from which it follows that if we let $M_j$ represent the time of the $j$th return to $0$, then for any $C>0$ we have \begin{align}\label{bndr0SRWzlo}{\sf SRW}_{\mathbb{Z}}\Big(\big|\{j\leq n:X_j=0\}\big|>Cn^{2/3}\Big)&\leq{\sf SRW}_{\mathbb{Z}}\Big(\big|\{j\leq\lfloor{Cn^{2/3}\rfloor}:M_j-M_{j-1}\geq n^{2/3}\}\big|\leq n^{1/3}\Big)\\ &=o\bigg({\bf P}\Big[\text{Bin}\Big(\lfloor{Cn^{2/3}\rfloor},\frac{n^{-1/3}}{3}\Big)\leq n^{1/3}\Big]\bigg)=o\Big(e^{-\frac{C}{4}n^{1/3}}\Big)\nonumber\end{align}(where the last equality, which is obtained through the use of Stirling's formula, holds provided $C$ is large enough so that $\frac{C}{3}-\text{log}C-1+\text{log}3>\frac{C}{4}$).  Now in order to apply this to simple random walk on $T$, we construct a coupling between simple random walk on $\mathbb{Z}$ (beginning at $0$) and simple random walk on $T$ (beginning at a vertex $v>v_o$), that is similar to the coupling described near the beginning of the proof of Theorem \ref{theorem:diffbndp1p}.  Letting $\{X^{(1)}_n\}$ and $\{X^{(2)}_n\}$ represent these random walks on $T$ and $\mathbb{Z}$ respectively (with the specified starting positions), set $\mathcal{r}_j$ equal to the number of steps taken by $\{X^{(1)}_j\}$ after taking its $j$th step inside of $T(v_o)$, and define $\Upsilon_j:=X^{(1)}_{\mathcal{r}_j}$. Now for each $j$, if $|X^{(2)}_j|$ is less than the distance between $\Upsilon_j$ and $v$, then we have the $(j+1)$th step of $\{X^{(2)}_n\}$ be independent of that of $\{\Upsilon_n\}$.  If instead $|X^{(2)}_j|$ equals the distance between $\Upsilon_j$ and $v$ (and $X^{(2)}_j$ does not equal $0$), then if the $(j+1)$th step of $\{\Upsilon_n\}$ goes towards $v$, we have the $(j+1)$th step of $\{X^{(2)}_n\}$ go towards $0$, and if the $(j+1)$th step of $\{\Upsilon_n\}$ goes away from $v$, we have the $(j+1)$th step of $\{X^{(2)}_n\}$ go away from $0$ with probability $\frac{\text{deg}(\Upsilon_j)+1}{2\text{deg}(\Upsilon_j)}$, and towards $0$ with probability $\frac{\text{deg}(\Upsilon_j)-1}{2\text{deg}(\Upsilon_j)}$.  Since this coupling ensures that $X^{(2)}_j=0$ whenever $\Upsilon_j=v$, and since $\big|\{j\leq n:X^{(1)}_j=v\}\big|\leq \big|\{j\leq n:\Upsilon_j=v\}\big|$, it follows that for any vertex $v$ that is a descendent of $v_o$, the number of returns to $v$ up to time $n$ is dominated by the number of returns to $0$ for simple random walk on $\mathbb{Z}$ up to time $n$.  Moreover, since $$\underset{v>v_o}{\text{max}}\big|\{j\leq n:X_j=v\}\big|=\underset{j\leq n}{\text{max}}\big|\{j\leq i\leq n:X_i=X_j\}\big|,$$ it follows from applying \eqref{bndr0SRWzlo}, along with a union bound, that for $C$ large enough so that $\frac{C}{3}-\text{log}C-1+\text{log}3>\frac{C}{4}$, we have$${\sf SRW}_T\Big(\underset{v>v_o}{\text{max}}\big|\{j\leq n:X_j=v\}\big|>Cn^{2/3}\Big)=o\Big(n e^{-\frac{C}{4}n^{1/3}}\Big)=o\Big(e^{-\frac{C}{5}n^{1/3}}\Big),$$thus completing the proof.
\end{proof}

\medskip
Before stating the final proposition that will be needed in order to prove Theorem \ref{brbnd0c}, which involves establishing the asymptotic bounds described at the beginning of the section, we'll first need to provide several new definitions.  First, we define a natural coupling between simple random walk on $T$ and $T^i$ by letting $N_j$ represent the total number of steps taken by $\{X_n\}$ after taking its $j$th step inside of $T^i$, and then defining the process $\{Y_n\}$ as $Y_j:=X_{N_j}$.  Since the process $\{Y_n\}$ on $T$ has the same law as the process $\{X_n\}$ on $T^i$, each of the two types of notation will be used at different points throughout the remainder of the proof, with the particular choice generally depending on context.  Next we define $S(i):=\underset{0\leq t<i}{\sum}r(Y_t)$.  Finally, we set $\tilde{V}(T):={\bf 0}\cup\{v\in T^i:Z^i(v)\geq 2\}$ and define $W(i):=\big|\{j<i:Y_j\in\tilde{V}(T)\}\big|$.

\medskip
\begin{prop}\label{prop:3ineqforlb}
For any $q<\infty$, ${\sf GW}_{\mathscr{A}}-\emph{a.s.}$ every $T$ has the property that for $\delta>0$ sufficiently small,\begin{align}\label{3ineqgwas}&\sum_{i=1}^{\lfloor{\delta n\rfloor}}{\sf SRW}_T\Big(\underset{j<i}{\emph{max}}|Y_j|\leq \delta n^{1/3},\ N_i>2n,\ S(i)<\delta^{1/6}n\Big)=o\Big(e^{-qn^{1/3}}\Big)\\ \label{3ineqgwas2}&\sum_{i=1}^{\lfloor{\delta n\rfloor}}{\sf SRW}_T\bigg(\underset{j<i}{\emph{max}}|Y_j|\leq \delta n^{1/3},\ W(i)> n^{2/3}\bigg)=o\Big(e^{-qn^{1/3}}\Big)\\ \label{3ineqgwas3}&\sum_{i=1}^{\lfloor{\delta n\rfloor}}{\sf SRW}_T\bigg(\underset{j<i}{\emph{max}}|Y_j|\leq \delta n^{1/3},\ W(i)\leq n^{2/3},\ S(i)\geq \delta^{\frac{1}{6}}n\bigg)=o\Big(e^{-qn^{1/3}}\Big)\end{align}
\end{prop}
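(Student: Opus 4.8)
The plan is to establish all three displays first for the annealed law $\overline{\sf P}:={\sf GW}_{\mathscr A}\times{\sf SRW}_{\bf T}$, in the form ``the annealed sum is $o(e^{-q'n^{1/3}})$ for some $q'>q$'', and then to pass to the quenched statement by Markov's inequality and Borel--Cantelli, as in Remark~1 and at the end of the proof of Proposition~\ref{prop:lbonpolt}. Throughout I would work on a ``good'' tree event $G_n$ on which, for every $v$ with $|v|\le n^{2/3}$, the finite bush satisfies $|T^f(v)|\le K(|v|+n^{1/3})$ and every maximal degree--one chain of $T^i$ through $v$ has length $\le K\log n$: combining the Kesten--Stigum growth bound $Z_m({\bf T})=O(\mu^m)$ (a consequence of the $1+\delta$ moment hypothesis, as in Lemma~\ref{lemma:profdegstov}) with the uniform exponential tail of $|{\bf T}^f|$ obtained inside the proof of Lemma~\ref{lemma:rficem}, a union bound over the $O(\mu^L)$ vertices within radius $L$ shows ${\sf GW}_{\mathscr A}(G_n^c)$ is summable. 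On $G_n$ one has $r(v)\le|T^f(v)|\le 2K\delta n^{1/3}$ for all $v$ with $|v|\le\delta n^{1/3}$; since $S(i)\le i\,\max_{j<i}r(Y_j)$ and $W(i)\le i$, the events in \eqref{3ineqgwas2} and \eqref{3ineqgwas3} are then empty unless $i$ exceeds a ($\delta$--dependent) multiple of $n^{2/3}$, so only such $i$ contribute to those two sums.

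For \eqref{3ineqgwas}: the coupling defining $\{Y_j\}$ gives $N_i=\sum_{t<i}L_t$, where conditionally on $(Y_0,\dots,Y_{i-1})$ the $L_t$ are independent and $L_t$ is the time simple random walk started at $Y_t$ inside the finite tree $T^f(Y_t)$ takes to first reach the $T^i$--neighbours of $Y_t$. On $\{\max_{j<i}|Y_j|\le\delta n^{1/3}\}\cap G_n$ every such $T^f(Y_t)$ has $\le 2K\delta n^{1/3}$ vertices, so Lemma~\ref{lemma:SRWrtft} applies with the uniform parameter $\lambda_n:=c_0(K\delta)^{-2}n^{-2/3}$ and yields ${\bf E}[e^{\lambda_n L_t}\mid Y_t]\le e^{\lambda_n(5r(Y_t)+1)}$. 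Writing $A_i$ for the event whose probability appears in \eqref{3ineqgwas}, a Chernoff bound in $N_i$ gives
\[
{\sf SRW}_T(A_i)\le e^{-2\lambda_n n}\,{\bf E}\!\left[1_{\{S(i)<\delta^{1/6}n,\ i\le\delta n\}}\prod_{t<i}{\bf E}[e^{\lambda_n L_t}\mid Y_t]\right]\le e^{-\lambda_n n(2-5\delta^{1/6}-\delta)}\le e^{-\lambda_n n}
\]
for $\delta$ small; summing over $i\le\lfloor\delta n\rfloor$ loses only a factor $\delta n$, and choosing $\delta$ with $c_0(K\delta)^{-2}>q$ finishes \eqref{3ineqgwas} (already in quenched form on $G_n$).

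For \eqref{3ineqgwas3}: write $S(i)=\sum_v k_v\,r(v)$, with $k_v$ the number of visits of $\{Y_j\}$ to $v$ before time $i$. On the event in \eqref{3ineqgwas3} intersected with $G_n$, the visits to $\tilde V(T)$ contribute at most $W(i)\cdot 2K\delta n^{1/3}\le 2K\delta n\le\tfrac12\delta^{1/6}n$ once $\delta$ is small (here the hypothesis $W(i)\le n^{2/3}$ is used), so the degree--one vertices, where $Z^i=1$, must satisfy $\sum_{t<i:\,Y_t\notin\tilde V}|T^f(Y_t)|\ge\tfrac12\delta^{1/6}n$. Conditionally on $T^i$ and on the path $(Y_j)$ the bush sizes $|T^f(v)|$ at distinct degree--one vertices are independent with mean $O(1)$ and the uniform exponential tail of Lemma~\ref{lemma:rficem}, while the conditional mean of the sum is $O(i)=O(\delta n)\ll\delta^{1/6}n$; hence this is a large deviation by a factor diverging in $n$, and a Cram\'er estimate --- with the exponential--moment parameter taken of order $(\max_v k_v)^{-1}$, the needed bound $\max_v k_v=O(\delta n^{1/3})$ coming from Proposition~\ref{prop:lbonpolt} (the number of visits to an ``escape'' vertex has a geometric tail) together with the $G_n$--bound on chain lengths, Lemma~\ref{lemma:bndnravutn} handling the residual vertices --- produces a bound $e^{-c_\delta n^{1/3}}$, uniformly in the admissible $i$, with $c_\delta\to\infty$ as $\delta\to0$; summing over $i$ and taking $\delta$ small gives $o(e^{-qn^{1/3}})$. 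For \eqref{3ineqgwas2}: since $W(i)\le i$ the event is empty unless $i>n^{2/3}$; passing to the walk $\{\overline Y_m\}$ induced by $\{Y_j\}$ on $\tilde V(T)$, the event forces $\{\overline Y_m\}$ to make more than $n^{2/3}$ steps while $\{Y_j\}$ stays inside the ball of radius $\delta n^{1/3}$, and in particular to visit ``escape'' vertices in the sense of Proposition~\ref{prop:lbonpolt} at least a positive fraction of $W(i)$ times (because, along any ray, escape vertices interleave with all branch vertices); since a simple random walk at an escape vertex has probability bounded below of leaving the ball from then on, the strong Markov property bounds the probability of such a confined excursion by $e^{-c\,W(i)}\le e^{-c\,n^{2/3}}$, uniformly in $i$; summing over $i\le\lfloor\delta n\rfloor$ and using $\delta n\,e^{-cn^{2/3}}=o(e^{-qn^{1/3}})$ finishes. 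Applying the Markov/Borel--Cantelli transfer to the annealed bounds for \eqref{3ineqgwas3} and \eqref{3ineqgwas2} then yields all three displays ${\sf GW}_{\mathscr A}$--a.s.

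The step I expect to be the main obstacle is the pair \eqref{3ineqgwas2}--\eqref{3ineqgwas3}: one must make rigorous the heuristic that a walk confined to a ball of radius $\delta n^{1/3}$ in $T^i$ can neither visit branch vertices more than $O(n^{2/3})$ times nor accumulate more than $o(n)$ worth of $r$--mass, except with probability exponentially small in $n^{1/3}$. The delicacy is that $T^i$ is neither regular nor free of (moderately) long degree--one chains, so one has to separate the cheap ``diffusing inside one long chain'' behaviour --- which produces very few branch--vertex visits and is hence excluded by $W(i)>n^{2/3}$ --- from the ``bouncing around the backbone'' behaviour, which does produce many branch--vertex visits but is exponentially costly because every branch vertex has $\ge2$ children, and to convert the escape--probability estimate of Proposition~\ref{prop:lbonpolt} into a quantitative drift/strong--Markov bound while controlling the walk's dwelling on non--escape vertices and short chains; in particular the visit--count control cannot be obtained from Lemma~\ref{lemma:bndnravutn} alone, since its error $e^{-\Theta(i^{1/3})}$ is too weak for the smallest admissible $i$, which are only of order $n^{2/3}$. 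A recurring secondary technicality in \eqref{3ineqgwas} and \eqref{3ineqgwas3} is that Lemma~\ref{lemma:SRWrtft} only provides exponential--moment control at scale $n^{-2/3}$, so every Chernoff/Cram\'er estimate must be arranged so that the gain $e^{-\Theta(\lambda n)}$ still dominates $e^{-qn^{1/3}}$ once $\delta$ is taken small.
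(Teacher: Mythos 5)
Your treatment of \eqref{3ineqgwas} matches the paper's: pick $\lambda$ of order $n^{-2/3}$ so that Lemma~\ref{lemma:SRWrtft} applies to every bush the confined walk can meet, use the conditional independence of the excursion times $N_j-N_{j-1}$ together with $S(i)<\delta^{1/6}n$ to get an $e^{\lambda(5\delta^{1/6}n+\delta n)}$ exponential--moment bound, and close with Markov. (The paper works on the a.s.\ events $\max_{|v|\le n}r(v)\le Mn$ and $\max_{|v|\le n}s(v)\le M'n$ derived from Lemma~\ref{lemma:rficem}, rather than your ad hoc $G_n$, but the role is identical.)

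For \eqref{3ineqgwas2} your argument has a genuine gap. Proposition~\ref{prop:lbonpolt} says that, at an ``escape'' vertex $v$, with probability $\ge p$ the walk steps to a new child and \emph{never returns to $v$}; this is not the same as \emph{leaving the ball of radius $\delta n^{1/3}$}, and the two cannot be conflated. Once the walk escapes from $v$ it is trapped in a strictly smaller subtree, so escapes can stack at most $O(\delta n^{1/3})$ deep before the ball boundary is reached; you therefore cannot obtain $W(i)>n^{2/3}$ ``independent trials'' and the claimed bound $e^{-cW(i)}\le e^{-cn^{2/3}}$ does not follow from the strong Markov property. The claim that a confined walk must spend a positive fraction of its $\tilde V$--visits at escape vertices is also unsupported: Proposition~\ref{prop:lbonpolt} is a statement about the vertices along a ray, not about the walk's visit counts. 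The paper's argument for \eqref{3ineqgwas2} is structurally different and avoids all of this: it splits $|Y_j|=\Phi(j)+\Phi'(j)$ into the contribution from branch vertices and from degree-one vertices, uses that at branch vertices the walk on $T^i$ steps away from the root with probability $\ge 2/3$ (pure drift, no reference to escape probabilities), observes that $W(i)>n^{2/3}$ combined with $\max|Y_j|\le\delta n^{1/3}$ forces the degree-one increment process $\Phi$ --- which is a fair simple random walk on $\mathbb Z$ --- to reach $\lesssim -n^{2/3}/7$ within time $\lceil\delta n\rceil$, and prices this at $2e^{-(\frac{1}{7\sqrt\delta}-1)n^{1/3}}$. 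Note the true rate is $n^{1/3}$ (amplified through $\delta\downarrow 0$), not $n^{2/3}$ as your heuristic suggests.

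For \eqref{3ineqgwas3} your plan is in the right spirit --- reduce to a weighted sum of i.i.d.\ exponential-tail bush sizes and run Cram\'er --- but the details are off. The visit-count control the Cram\'er step needs is $\max_v k_v = O(\delta^{1/4}n^{2/3})$, and it comes from Lemma~\ref{lemma:bndnravutn}, not from Proposition~\ref{prop:lbonpolt} (which controls no visit counts); the $O(\delta n^{1/3})$ bound you assert is both wrong in scale and unavailable. Your parenthetical worry that Lemma~\ref{lemma:bndnravutn}'s $e^{-\Theta(i^{1/3})}$ error is ``too weak for the smallest admissible $i\sim n^{2/3}$'' misreads how the paper uses it: all the events in \eqref{3ineqgwas3} are bounded by the single $i$-independent event $\{\sum_{j<\lfloor\delta n\rfloor}r(Y_j)1_{Y_j\notin\tilde V}\ge(\delta^{1/6}-M\delta)n\}$, so Lemma~\ref{lemma:bndnravutn} is only ever invoked at time horizon $\delta n$ (giving an error $o(e^{-\frac{1}{5}\delta^{-1/12}n^{1/3}})$), and the sum over $i$ costs merely a factor $\lfloor\delta n\rfloor$. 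Finally, your overarching strategy of proving an annealed $o(e^{-q'n^{1/3}})$ and then transferring by Markov/Borel--Cantelli is only what the paper does for (a portion of) \eqref{3ineqgwas3}, using the decomposition ${\sf GW}_{\mathscr A}={\sf GW}^i\times{\sf GW}^f_{\bf T}$ to integrate only over the bushes given the backbone; for \eqref{3ineqgwas} and \eqref{3ineqgwas2} the paper's bounds are already quenched and no such transfer is needed.
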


\begin{proof}
To establish \eqref{3ineqgwas}, we begin by looking at the quantity $r({\bf 0})$ from Lemma \ref{lemma:rficem}, and note that since it has exponential moments with respect to ${\sf GW}_{\mathscr{A}}$, it follows that there exists a constant $\alpha<1$ such that, for $n$ sufficiently large, ${\sf GW}_{\mathscr{A}}(r({\bf 0})\geq n)\leq \alpha^n$.  Now noting that ${\bf T}^i$, conditioned to survive, is itself a Galton-Watson tree with offspring distribution $(Z^i|Z^i\geq 1)$ (see Proposition 5.28, \cite{LP}), and then selecting some $M<\infty$ and $m>\mathbb{E}[Z^i|Z^i\geq 1]$, we find that for large enough $n$, we have \begin{align}{\sf GW}_{\mathscr{A}}\big(\text{max}\{r(v):v\in\partial{\bf T}^i_n\}\geq Mn\big)&\leq{\sf GW}_{\mathscr{A}}\big(Z^i_n\geq m^n\big)+1-\big(1-\alpha^{Mn}\big)^{m^n} \nonumber \\ &\leq\big(\mathbb{E}[Z^i|\mathscr{A}]/m\big)^n+1-\big(1-\alpha^{Mn}\big)^{m^n}\nonumber\end{align}(where $Z^i_n$ represents $|\partial{{\bf T}_n^i}|$).  Since the expression on the second line is summable for $M>\frac{\text{log}\ m}{\text{log}(1/\alpha)}$, it now follows from Borel-Cantelli that for $M>\frac{\text{log}\ m}{\text{log}(1/\alpha)}$, we have $${\sf GW}_{\mathscr{A}}\big(\text{max}\{r(v):v\in\partial{\bf T}^i_n\}\geq Mn\ \text{i.o.}\big)=0,$$which also implies that for ${\sf GW}_{\mathscr{A}}$ almost every $T$, \begin{equation}\label{lbrgwasem}\underset{|v|\leq n}{\text{max}}\ r(v)\leq Mn\end{equation}for $n$ sufficiently large.  In addition, if we let $s(v):=\underset{u\in T^f_1(v)}{\text{max}}|T^f(u)|$, then because $|{\bf T}|$ has exponential moments with respect to ${\sf GW}(\cdot|\mathscr{A}^c)$ (as shown in the first paragraph of the proof of Lemma \ref{lemma:rficem}), it follows (again by the same argument) that there exists $M'<\infty$ such that for ${\sf GW}_{\mathscr{A}}$ almost every $T$, \begin{equation}\label{lbrgwasem2}\underset{|v|\leq n}{\text{max}}\ s(v)\leq M'n\end{equation}for large $n$.  Thus, if we denote the event $$\left\{\underset{j<i}{\text{max}}\ r(Y_j)\leq M\delta n^{1/3}\right\}\cap\left\{\underset{j<i}{\text{max}}\ s(Y_j)\leq M'\delta n^{1/3}\right\}\cap\left\{S(i)<\delta^{\frac{1}{6}}n\right\}$$ as $B$, and let $Y_{[0,k]}=(Y_0,\dots,Y_k)$, then we find that for any $T$ satisfying \eqref{lbrgwasem} and \eqref{lbrgwasem2}, the summand in \eqref{3ineqgwas} is bounded above by $${\sf SRW}_T\Big(\left\{N_i>2n\right\}\cap B\Big)\leq\sum_{{\bf y}_{[0,i-1]}\in B}{\sf SRW}_T\Big({\bf y}_{[0,i-1]}\Big)\cdot{\sf SRW}_T\bigg(\sum_{j=1}^i (N_j-N_{j-1})>2n\ \bigg|\ {\bf y}_{[0,i-1]}\bigg)$$for all $i\leq \delta n$ for $n$ sufficently large.  Now denoting $M_o:=\text{max}\{M,M'\}$ and setting $\lambda=\frac{1}{18 \delta^2 M_o^2 n^{2/3}}$, we see that since the random variables $N_j-N_{j-1}$ are independent for distinct $j$ (when we are conditioning on a specific ${\bf y}_{[0,i-1]}=(y_0,\dots,y_{i-1})\in B$), it follows from Lemma \ref{lemma:SRWrtft} that $${\bf E}_{{\sf SRW}_T}\big[e^{\lambda N_i}\big|{\bf y}_{[0,i-1]}\big]=\prod_{j=1}^i {\bf E}_{{\sf SRW}_T}\big[e^{\lambda(N_j-N_{j-1})}\big|Y_{j-1}=y_{j-1}\big]\leq e^{\lambda\big(5\delta^{\frac{1}{6}}n+\delta n\big)}$$(where the $\delta n$ term at the end of the exponent follows from the fact that $i\leq \delta n$).  Using Markov's inequality, this then implies that $${\sf SRW}_T\bigg(\sum_{j=1}^i (N_j-N_{j-1})>2n\ \bigg|\ {\bf y}_{[0,i-1]}\bigg)\leq e^{\lambda\big(5\delta^{\frac{1}{6}}n+\delta n-2n\big)}.$$Now plugging this into the expression for the upper bound on ${\sf SRW}_T\Big(\left\{N_i>2n\right\}\cap B\Big)$, we find that the summand in \eqref{3ineqgwas} is bounded above by $e^{\lambda\big(5\delta^{\frac{1}{6}}n+\delta n-2n\big)}$ (for all $i\leq \delta n$ for $n$ sufficiently large).  Summing this over all $i\leq \delta n$ (i.e. multiplying it by $\lfloor{\delta n\rfloor}$), and recalling that $\lambda=\frac{1}{18\delta^2 M_o^2 n^{2/3}}$, we see that we can indeed obtain \eqref{3ineqgwas}  by taking $\delta$ sufficiently close to $0$.

Moving on to the proof of \eqref{3ineqgwas2}, we first observe that if we let $N(t):=\text{max}\{j:W(j)\leq t\}$, then for every $T\in\mathscr{A}$ we have \begin{align}\label{s2ndsi2s}&\sum_{i=1}^{\lfloor{\delta n\rfloor}}{\sf SRW}_T\bigg(\underset{j<i}{\text{max}}|Y_j|\leq \delta n^{1/3},\ W(i)> n^{2/3}\bigg) \\&\leq\sum_{i=1}^{\lfloor{\delta n\rfloor}}{\sf SRW}_T\bigg(\underset{j<i}{\text{max}}|Y_j|\leq \delta n^{1/3},\ W(i)>n^{2/3},\ \Big|\{j\leq N\big(n^{2/3}\big):Y_j\in\tilde{V}(T),\ |Y_{j+1}|=|Y_j|+1\}\Big|>\frac{7}{12}n^{2/3}\bigg) \nonumber \\ &+\sum_{i=1}^{\lfloor{\delta n\rfloor}}{\sf SRW}_T\bigg(\Big|\{j\leq N\big(n^{2/3}\big):Y_j\in\tilde{V}(T),\ |Y_{j+1}|=|Y_j|+1\}\Big|\leq\frac{7}{12}n^{2/3}\bigg). \nonumber\end{align}Since $\{Y_n\}$ performs simple random walk on $T^i$, and because every non-root vertex in $\tilde{V}(T)$ has, by definition, at least two children in $T^i$, it follows that for any $v\in\tilde{V}(T)$ we have ${\sf SRW}_T\big(|Y_{j+1}|=|Y_j|+1\ \big|\ Y_j=v\big)\geq\frac{2}{3}$.  Hence, using a large deviation bound, we can conclude that the summand on the third line in \eqref{s2ndsi2s} is bounded above by $e^{-Cn^{2/3}}$ (for some $C>0$ that is independent  of $T$), which implies that the sum itself is bounded above by $\delta n e^{-Cn^{2/3}}$.  

Now in order to obtain a corresponding upper bound for the sum on the second line, we start by defining $M(t):=\text{min}\{j:j+1-W(j+1)\geq t\}$ and $\Phi(i):=\big|\{j<i:Y_j\not\in \tilde{V}(T),\ |Y_{j+1}|=|Y_j|+1\}\big|-\big|\{j<i:Y_j\not\in \tilde{V}(T),\ |Y_{j+1}|=|Y_j|-1\}\big|$.  In words, $M(t)$ is a stopping time representing the number of steps $\{Y_n\}$ takes in $T^i$ until it has landed inside the set $\tilde{V}(T)^c$ (i.e. the set of non-root vertices with exactly one child in $T^i$) a total of $\lceil{t\rceil}$ times, and $\Phi(i)$ represents the net contribution to $|Y_i|$ that is made by steps taken by $\{Y_n\}$ before time $i$ from non-root vertices with exactly one child.  If we also define $\Phi'(i):=\big|\{j<i:Y_j\in \tilde{V}(T),\ |Y_{j+1}|=|Y_j|+1\}\big|-\big|\{j<i:Y_j\in \tilde{V}(T),\ |Y_{j+1}|=|Y_j|-1\}\big|$ (so that $\Phi'(i)$ represents the net contribution to $|Y_i|$ that is made by steps taken by $\{Y_n\}$ before time $i$ from vertices in $\tilde{V}(T)$), then we find that the event described inside the summand on the second line in \eqref{s2ndsi2s} implies that $\Phi'\big(N(n^{2/3})\big)>\frac{7}{12}n^{2/3}-\frac{5}{12}n^{2/3}-1=\frac{1}{6}n^{2/3}-1$.  Noting, in addition, that $W(i)>n^{2/3}$ implies $i>N(n^{2/3})$, and that $i\leq \delta n\leq M(\delta n)$, we see that if these observations are combined with the assumption $\underset{j<i}{\text{max}}|Y_j|\leq \delta n^{1/3}$, and the fact that $\Phi(j)+\Phi'(j)=|Y_j|$, then we can conclude that the event inside the summand implies that$$\underset{j\leq M(\delta n)}{\text{min}}\Phi(j)\leq\Phi\big(N(n^{2/3})\big)\leq\delta n^{1/3}-\frac{1}{6}n^{2/3}+1<-\frac{1}{7}n^{2/3}$$(where the last inequality holds for $n$ sufficiently large).  In addition, since $\{Y_n\}$ is equally likely to step towards or away from the root every time it is at a vertex in $\tilde{V}(T)^c$, it follows that $\underset{j\leq M(\delta n)}{\text{min}}\Phi(j)$  has the same distribution as the minimum value attained by simple random walk on $\mathbb{Z}$ up to time $\lceil{\delta n\rceil}-1$.  Hence, this means that the summand on the second line of \eqref{s2ndsi2s} (for large enough $n$ and all $i\leq \delta n$) is bounded above by $$2\cdot{\sf SRW}_{\mathbb{Z}}\Big(X_{\lfloor{\delta n\rfloor}}>\frac{1}{7}n^{2/3}\Big)\leq 2e^{-\big(\frac{1}{7\sqrt{\delta}}-1\big)n^{1/3}}$$(where the last inequality follows from first noting that ${\bf E}[e^{\lambda X_j}]\leq e^{\lambda ^2 j}$, then setting $\lambda=\frac{n^{-1/3}}{\sqrt{\delta}}$, and then applying Markov's inequality).  Multiplying this bound by $\delta n$, combining it with the bound for the expression on the third line of \eqref{s2ndsi2s}, and taking $\delta\downarrow 0$, now establishes \eqref{3ineqgwas2}.

To establish \eqref{3ineqgwas3}, we start by noting that if $T$ satisfies \eqref{lbrgwasem} for $n$ sufficiently large (recall this applies for ${\sf GW}_{\mathscr{A}}-$ a.s. every $T$), then \begin{align*}&\left\{\underset{j<i}{\text{max}}|Y_j|\leq \delta n^{1/3}\right\}\cap\left\{W(i)\leq n^{2/3}\right\}\cap\left\{S(i)\geq \delta^{\frac{1}{6}}n\right\}\\&\subseteq\left\{r(Y_j)\leq M\delta n^{1/3}\ \forall\ j<i\right\}\cap\left\{W(i)\leq n^{2/3}\right\}\cap\left\{S(i)\geq \delta^{\frac{1}{6}}n\right\}\\&\subseteq\left\{\underset{j<i}{\sum}r(Y_j)1_{Y_j\in\tilde{V}(T)}\leq M\delta n\right\}\cap\left\{S(i)\geq \delta^{\frac{1}{6}}n\right\}\\&\subseteq\left\{\underset{j<i}{\sum}r(Y_j)1_{Y_j\not\in\tilde{V}(T)}\geq(\delta^{\frac{1}{6}}-M\delta)n\right\}\subseteq\left\{\underset{j<\lfloor{\delta n\rfloor}}{\sum}r(Y_j)1_{Y_j\not\in\tilde{V}(T)}\geq(\delta^{\frac{1}{6}}-M\delta)n\right\}\end{align*}for $n$ sufficiently large and all $i\leq \delta n$.  Since the first event above is the event inside the summand in \eqref{3ineqgwas3}, it will suffice to establish that, for $\delta$ sufficiently small, we have \begin{equation}\label{bndsrvd1}{\sf SRW}_{{\bf T}}\Bigg(\underset{j<\lfloor{\delta n\rfloor}}{\sum}r(Y_j)1_{Y_j\not\in\tilde{V}({\bf T})}\geq(\delta^{\frac{1}{6}}-M\delta)n\Bigg)=o\Big(e^{-qn^{1/3}}\Big)\ \ {\sf GW}_{\mathscr{A}}-\text{a.s.}\end{equation}Now we define ${\sf GW}^i$ to be the Galton-Watson measure associated with the offspring distribution $(Z^i|Z^i\geq 1)$.  In other words, ${\sf GW}^i$ is the measure associated with ${\bf T}^i$ after conditioning on survival.  If we now let ${\sf GW}^f_j$ (for $j\geq 1$) represent the measure associated with ${\bf T}^f({\bf 0})$ after conditioning on the event $Z^i=j$, and then define ${\sf GW}^f_T:=\underset{v\in V(T)}{\prod}{\sf GW}^f_{Z^i(v)}$ (where $T$ can be any infinite rooted tree without leaves), then we find that ${\sf GW}_{\mathscr{A}}={\sf GW}^i\times{\sf GW}^f_{{\bf T}}$ (where the ${\bf T}$ in the subscript represents the tree generated according to the measure ${\sf GW}^i$).

Our approach for establishing \eqref{bndsrvd1} will be to use the above decomposition of ${\sf GW}_{\mathscr{A}}$ to show that for every infinite rooted tree $T$ without leaves (excluding the infinite half-line), the equality in \eqref{bndsrvd1} holds for ${\sf GW}^f_T-\text{a.s.}$ every ${\bf T}$ obtained by adding finite subtrees to the vertices of $T$.  To do this, we first observe that on account of the fact that for any fixed set of finite subtrees rooted at the ancestors of $v_o$ (with $v_o$ defined as in Lemma \ref{lemma:bndnravutn}), their contribution to the sum in \eqref{bndsrvd1} will become negligible relative to $\delta^{\frac{1}{6}}n$ as $\delta\downarrow 0$, this means that in order to show that \eqref{bndsrvd1} holds ${\sf GW}^f_T-\text{a.s.}$, it will suffice to show that, for $\delta$ sufficiently small, we have\begin{equation}\label{bndsrvd12}{\sf SRW}_{{\bf T}}\Bigg(\underset{j<\lfloor{\delta n\rfloor}}{\sum}r(Y_j)1_{Y_j\in\{\tilde{V}(T)^c\}\cap\{T(v_o)\}}\geq\frac{\delta^{\frac{1}{6}}}{2}n\Bigg)=o\Big(e^{-qn^{1/3}}\Big)\ \ {\sf GW}^f_T-\text{a.s.}\end{equation}Next we note that it follows from Lemma \ref{lemma:bndnravutn} that for $\delta$ sufficiently small, we have \begin{align}\label{bndmrvodc}{\sf SRW}_T\Big(\underset{v>v_o}{\text{max}}\big|\{j\leq \delta n:X_j=v\}\big|>\delta^{\frac{1}{4}}n^{2/3}\Big)&={\sf SRW}_T\Big(\underset{v>v_o}{\text{max}}\big|\{j\leq \delta n:X_j=v\}\big|>\delta^{-\frac{5}{12}}(\delta n)^{2/3}\Big)\\&=o\Big(e^{-\frac{\delta^{-\frac{1}{12}}}{5}n^{1/3}}\Big).\nonumber\end{align}Now we define $D$ to be the set of all paths of length $\lfloor{\delta n\rfloor}-1$ in $T$ that do not land on any vertex in $\{\tilde{V}(T)^c\}\cap\{T(v_o)\}$ more than $\delta^{\frac{1}{4}}n^{2/3}$ times.  Letting ${\bf y}_{[0,\lfloor{\delta n\rfloor}-1]}$ be one such path, if we let $a_1,\dots,a_{\ell}$ represent the number of times it hits each vertex in $\{\tilde{V}(T)^c\}\cap\{T(v_o)\}$ (only counting vertices it hits at least once), then we find that \begin{equation}\label{gwcSRWbndsr}{\sf GW}^f_T\times{\sf SRW}_{{\bf T}}\Bigg(\underset{j<\lfloor{\delta n\rfloor}}{\sum}r(Y_j)1_{Y_j\in\{\tilde{V}(T)^c\}\cap\{T(v_o)\}}\geq\frac{\delta^{\frac{1}{6}}}{2}n\ \bigg|\ {\bf y}_{[0,\lfloor{\delta n\rfloor}-1]}\Bigg)={\bf P}\Bigg(\sum_{j\leq\ell}a_j L_j\geq\frac{\delta^{\frac{1}{6}}}{2}n\Bigg)\end{equation}(where the $L_j$'s represent independent copies of $r({\bf 0})|_{Z^i=1}$).  Since the $L_j$'s have exponential moments, this means there must exist $\lambda_o>0$ and $c<\infty$ such that for any nonnegative $\lambda\leq\lambda_o$ we have ${\bf E}[e^{\lambda L_1}]\leq e^{c\lambda}$.  If we now let $b_j=\delta^{-\frac{1}{4}}n^{-2/3}\lambda_o a_j$ for each $j$, then since $a_j\leq \delta^{\frac{1}{4}}n^{2/3}$ for each $j$ and $a_1+\dots a_{\ell}\leq \delta n$, it follows that $${\bf E}[e^{b_1 L_1+\dots +b_{\ell}L_{\ell}}]=\prod_{i=1}^{\ell}{\bf E}[e^{b_i L_i}]\leq e^{c(b_1+\dots+b_{\ell})}\leq e^{c\lambda_o \delta^{\frac{3}{4}}n^{1/3}},$$which implies, by Markov's inequality, that $${\bf P}\Big(a_1 L_1+\dots +a_{\ell}L_{\ell}\geq\frac{\delta^{\frac{1}{6}}}{2}n\Big)={\bf P}\Big(b_1 L_1+\dots +b_{\ell}L_{\ell}\geq\frac{\delta^{-\frac{1}{12}}}{2}\lambda_o n^{1/3}\Big)\leq e^{\Big(c \delta^{\frac{3}{4}}-\frac{\delta^{-\frac{1}{12}}}{2}\Big)\lambda_o n^{1/3}}.$$Plugging this bound into the expression on the left in \eqref{gwcSRWbndsr} and summing over all ${\bf y}_{[0,\lfloor{\delta n\rfloor}-1]}\in D$, while also noting that \eqref{bndmrvodc} implies that ${\sf SRW}_T(D^c)=o\Big(e^{-\frac{\delta^{-\frac{1}{12}}}{5}n^{1/3}}\Big)$ for $\delta$ sufficiently small, we now find that \begin{align*}&{\sf GW}^f_T\times{\sf SRW}_{{\bf T}}\Bigg(\underset{j<\lfloor{\delta n\rfloor}}{\sum}r(Y_j)1_{Y_j\in\{\tilde{V}(T)^c\}\cap\{T(v_o)\}}\geq\frac{\delta^{\frac{1}{6}}}{2}n\Bigg)\leq{\sf SRW}_T(D^c) \\&+\sum_D{\sf SRW}_T\Big({\bf y}_{[0,\lfloor{\delta n\rfloor}-1]}\Big)\cdot{\sf GW}^f_T\times{\sf SRW}_{{\bf T}}\Bigg(\underset{j<\lfloor{\delta n\rfloor}}{\sum}r(Y_j)1_{Y_j\in\{\tilde{V}(T)^c\}\cap\{T(v_o)\}}\geq\frac{\delta^{\frac{1}{6}}}{2}n\ \bigg|\ {\bf y}_{[0,\lfloor{\delta n\rfloor}-1]}\Bigg)\\ &= o\Big(e^{-r_o \delta^{-\frac{1}{12}} n^{1/3}}\Big)\end{align*}for some $r_o>0$ for small enough $\delta$.  By another application of Markov's inequality, this then implies that $${\sf GW}^f_T\Bigg({\bf T}:{\sf SRW}_{{\bf T}}\Bigg(\underset{j<\lfloor{\delta n\rfloor}}{\sum}r(Y_j)1_{Y_j\in\{\tilde{V}(T)^c\}\cap\{T(v_o)\}}\geq\frac{\delta^{\frac{1}{6}}}{2}n\Bigg)\geq n^2 e^{-r_o \delta^{-\frac{1}{12}} n^{1/3}}\Bigg)=o\Big(\frac{1}{n^2}\Big)$$for $\delta$ sufficiently small, which by Borel-Cantelli, implies that \begin{equation}\label{bndSRWtgwftbc}{\sf SRW}_{{\bf T}}\Bigg(\underset{j<\lfloor{\delta n\rfloor}}{\sum}r(Y_j)1_{Y_j\in\{\tilde{V}(T)^c\}\cap\{T(v_o)\}}\geq\frac{\delta^{\frac{1}{6}}}{2}n\Bigg)=O\Big(n^2 e^{-r_o \delta^{-\frac{1}{12}} n^{1/3}}\Big)\ \ {\sf GW}^f_T-\text{a.s.}\end{equation}Taking $\delta\downarrow 0$ in \eqref{bndSRWtgwftbc}, this now establishes \eqref{bndsrvd12}, thus completing the proof of the proposition.
\end{proof}

\medskip
\begin{proof}[Proof of Theorem \ref{brbnd0c}]
We start the proof of the upper bound in \eqref{ubnd0c} by introducing a few additional definitions.  First, for any infinite rooted tree $T$ and vertex $v\in T$, let $\mathcal{D}_T(v)$ equal the graph distance between $v$ and the closest vertex to $v$ that is in $T^i$.  Next, for every $n\geq 0$ we define $\mathcal{H}_n(T)$ to be the maximum height of any finite subtree rooted at a vertex on level $n$ of $T$, and we note that, assuming $\mathcal{H}_n(T)\geq 1$, it can be expressed as $1+\text{max}\{h(v):v\in \partial T_{n+1},\ \mathcal{D}_T(v)=1\}$ (where $h(v)$ represents the height of the tree $T(v)$).  Now observe that for any $A>0$, $r>\mu$, and $n\geq 1$ such that $An>1$, we have \begin{align}{\sf GW}_{\mathscr{A}}\big(\mathcal{H}_n({\bf T})\geq An)&\leq {\sf GW}_{\mathscr{A}}\big(\big|\{v\in \partial{\bf T}_{n+1}:\mathcal{D}_{{\bf T}}(v)=1\}\big|\geq r^{n+1}\big)+1-\Big(1-{\sf GW}\big(h({\bf 0})\geq An-1\ \big|\ \mathscr{A}^c\big)\Big)^{r^{n+1}} \nonumber \\ &\leq{\sf GW}_{\mathscr{A}}\big(Z_{n+1}\geq r^{n+1}\big)+1-\big(1-\mu_o^{An-1}\big)^{r^{n+1}} \nonumber \\ &\leq\frac{(\mu/r)^{n+1}}{1-q}+1-\big(1-\mu_o^{An-1}\big)^{r^{n+1}}\label{ubplftlngcn}\end{align}(recall that $\mu_o={\bf E}[Z|\mathscr{A}^c]$).  Since the last expression in \eqref{ubplftlngcn} is summable for $A>\frac{\text{log}r}{\text{log}(1/\mu_o)}$, it follows from Borel-Cantelli that for any $A>\frac{\text{log}\mu}{\text{log}(1/\mu_o)}$, we have ${\sf GW}_{\mathscr{A}}(\mathcal{H}_n({\bf T})\geq An\ \text{i.o.})=0$.  Hence, we can conclude that in order to establish the upper bound in \eqref{ubnd0c}, it will suffice to show that \begin{equation}\label{ncptt}\underset{A\to\infty}{\text{lim}}\Bigg[\underset{n\to\infty}{\text{liminf}}\ {\sf SRW}_{\bf{T}}\bigg(\text{max}\{|X_j|:j\leq 2n,\ X_j\in{\bf T}^i\}\leq An^{1/3}\ \Big|\ X_{2n}={\bf 0}\bigg)\Bigg]=1\ \ {\sf GW}_{\mathscr{A}}-\text{a.s.}\end{equation}

To establish \eqref{ncptt}, we'll use nearly the same argument used to get \eqref{impbndfr16} and \eqref{limforpreq10} at the end of the proof of Theorem \ref{theorem:diffbndp1p}.  So let $T$ be a tree in $\mathscr{A}_k$ for some $k$ (see Lemma \ref{lemma:mdmc0c} for the definition of $\mathscr{A}_k$) that satisfies the equality in Proposition \ref{prop:lbfrp2}, and assume $T^i$ satisfies Proposition \ref{prop:lbonpolt} for some $p>0$, $s>0$, and $N\in\mathbb{N}$.  Then for any $v\in T^i$ and $A$, $n$ satisfying $|v|\geq A n^{1/3}\geq N$, the probability simple random walk beginning at $v$ ever returns to the root is bounded above by $e^{s\text{log}(1-p)An^{1/3}}$, thus implying that for $An^{1/3}\geq N$ we have $${\sf SRW}_T\bigg(\Big\{\text{max}\{|X_j|:j\leq 2n,\ X_j\in T^i\}\geq A n^{1/3}\Big\}\cap\{X_{2n}={\bf 0}\}\bigg)\leq 2n e^{s\text{log}(1-p)A n^{1/3}}.$$Hence, if $A>\frac{C_k}{s\text{log}(1-p)}$ (see Proposition \ref{prop:lbfrp2}), then we find that \begin{equation}\label{eqfnmxj}\underset{n\to\infty}{\text{liminf}}\ {\sf SRW}_{T}\bigg(\text{max}\{|X_j|:j\leq 2n,\ X_j\in T^i\}\leq An^{1/3}\ \Big|\ X_{2n}={\bf 0}\bigg)=1.\end{equation}Since ${\bf T}^i$ is itself a Galton-Watson tree (with respect to ${\sf GW}_{\mathscr{A}}$), this means it must satisfy Proposition \ref{prop:lbonpolt} ${\sf GW}_{\mathscr{A}}-$almost surely.  In addition, because ${\sf GW}_{\mathscr{A}}(\mathscr{A}_k)\to 1$ as $k\to\infty$ (this follows from the fact that the offspring distribution $Z\cdot 1_{Z\leq k}$ converges to $Z$), we also know that ${\sf GW}_{\mathscr{A}}-$almost surely every $T$ satisfies Proposition \ref{prop:lbfrp2}.  Hence, the properties we imposed on $T$ to get \eqref{eqfnmxj} apply to ${\sf GW}_{\mathscr{A}}-$almost surely every tree $T$, thus establishing \eqref{ncptt} and completing the proof of the upper bound in \eqref{ubnd0c}.

On account of Proposition \ref{prop:lbfrp2}, in order to obtain the lower bound in \eqref{ubnd0c}, it will suffice to show that ${\sf GW}_{\mathscr{A}} -$a.s. every $T$ has the property that for each $q<\infty$, taking $\delta>0$ sufficiently small gives\begin{equation}\label{pSRWsunylo}{\sf SRW}_T\Big(\underset{j\leq 2n}{\text{max}}|X_j|\leq \delta n^{1/3}\Big)=o\Big(e^{-qn^{1/3}}\Big).\end{equation}Now to establish \eqref{pSRWsunylo}, we will need to refer back to the sequences $\{N_j\}$ and $\{Y_n\}$, as well as the functions $S(i)$ and $W(i)$, from Propostion \ref{prop:3ineqforlb}.  We begin by defining the quantity $t(n):=\text{min}\{j:N_j>2n\}$, and note that because $$\underset{j\leq 2n}{\text{max}}|X_j|\geq\underset{j<t(n)}{\text{max}}|Y_j|,$$proving \eqref{pSRWsunylo} is reduced to showing that ${\sf GW}_{\mathscr{A}}-$a.s. every $T$ satisfies the condition that for each $q<\infty$,\begin{equation}\label{bpSRWlnyutn}{\sf SRW}_T\Big(\underset{j<t(n)}{\text{max}}|Y_j|\leq \delta n^{1/3}\Big)=o\Big(e^{-qn^{1/3}}\Big)\end{equation}for $\delta$ sufficiently small.  To establish \eqref{bpSRWlnyutn}, we let $T$ be any surviving tree and observe that for any $\delta>0$, we have\begin{align}{\sf SRW}_T\Big(\underset{j<t(n)}{\text{max}}|Y_j|\leq \delta n^{1/3}\Big)&={\sf SRW}_T\Big(\underset{j<t(n)}{\text{max}}|Y_j|\leq \delta n^{1/3},\ t(n)>\delta n\Big)+{\sf SRW}_T\Big(\underset{j<t(n)}{\text{max}}|Y_j|\leq \delta n^{1/3},\ t(n)\leq \delta n\Big) \nonumber\\ &\leq {\sf SRW}_{ T^i}\Big(\underset{j\leq \delta n}{\text{max}}|X_j|\leq \delta n^{1/3}\Big)+{\sf SRW}_T\Big(\underset{j<t(n)}{\text{max}}|Y_j|\leq \delta n^{1/3},\ t(n)\leq \delta n\Big)\label{lsineqBRWplny}\end{align}Now coupling simple random walk on $T^i$ with simple random walk on $\mathbb{Z}$ via the same method as in the first paragraph of the proof of Theorem \ref{theorem:diffbndp1p}, and once again using Lemma \ref{lemma:t3mogl34g}, we find that $${\sf SRW}_{T^i}\Big(\underset{j\leq \delta n}{\text{max}}|X_j|\leq \delta n^{1/3}\Big)\leq e^{-(1+o(1))\frac{\pi^2}{8\delta}n^{1/3}},$$which is $o(e^{-q n^{1/3}}\Big)$ for small enough $\delta$.  Combining this with \eqref{bpSRWlnyutn} and \eqref{lsineqBRWplny}, we conclude that to establish \eqref{pSRWsunylo}, it will suffice to show that ${\sf GW}_{\mathscr{A}}-$a.s. every $T$ satisfies the condition that for every $q<\infty$, taking $\delta$ sufficiently small gives\begin{equation}\label{bpanytnlna}{\sf SRW}_T\Big(\underset{j<t(n)}{\text{max}}|Y_j|\leq \delta n^{1/3},\ t(n)\leq \delta n\Big)=o\Big(e^{-qn^{1/3}}\Big).\end{equation}Now noting that\begin{align}\label{ubmyjtnlnas}{\sf SRW}_T\Big(\underset{j<t(n)}{\text{max}}|Y_j|\leq \delta n^{1/3},\ t(n)\leq \delta n\Big)&\leq\sum_{i=1}^{\lfloor{\delta n\rfloor}}{\sf SRW}_T\Big(\underset{j<i}{\text{max}}|Y_j|\leq \delta n^{1/3},\ t(n)\leq i\Big) \\ &=\sum_{i=1}^{\lfloor{\delta n\rfloor}}{\sf SRW}_T\Big(\underset{j<i}{\text{max}}|Y_j|\leq \delta n^{1/3},\ N_i>2n\Big)\nonumber \\&\leq \sum_{i=1}^{\lfloor{\delta n\rfloor}}{\sf SRW}_T\Big(\underset{j<i}{\text{max}}|Y_j|\leq \delta n^{1/3},\ N_i>2n,\ S(i)<\delta^{\frac{1}{6}}n\Big)\nonumber \\&+\sum_{i=1}^{\lfloor{\delta n\rfloor}}{\sf SRW}_T\bigg(\underset{j<i}{\text{max}}|Y_j|\leq \delta n^{1/3},\ W(i)> n^{2/3}\bigg)\nonumber \\&+\sum_{i=1}^{\lfloor{\delta n\rfloor}}{\sf SRW}_T\bigg(\underset{j<i}{\text{max}}|Y_j|\leq \delta n^{1/3},\ W(i)\leq n^{2/3},\ S(i)\geq \delta^{\frac{1}{6}}n\bigg),\nonumber\end{align}and observing that the expressions on the last three lines of \eqref{ubmyjtnlnas} are simply the three sums from \eqref{3ineqgwas}, \eqref{3ineqgwas2}, and \eqref{3ineqgwas3} respectively, we see that \eqref{bpanytnlna} now follows from Proposition \ref{prop:3ineqforlb}, thus establishing the lower bound in \eqref{ubnd0c}, and completing the proof of the theorem.
\end{proof}

\section{Case 2: ${\bf P}(Z\geq 2)=1$}

\begin{theorem}\label{theorem:diffbound}
If the offspring distribution $Z$ satisfies ${\bf P}(Z\geq 2)=1$, and ${\bf E}[Z]<\infty$, then for every $\gamma<1$
\begin{equation}\label{bndonmaxR}\underset{n\to\infty}{\emph{lim}}{\sf SRW}_{\bf{T}}\bigg(\underset{j\leq 2n}{\emph{max}}|X_j|\geq n^{\gamma}\ \Big|\ X_{2n}={\bf 0}\bigg)=1\ \ {\sf GW}-\emph{a.s.},\end{equation}and for every $\beta<1$\begin{equation}\label{ubndonmaxd}\underset{n\to\infty}{\emph{lim}}{\sf SRW}_{\bf{T}}\bigg(\underset{j\leq 2n}{\emph{max}}|X_j|\geq\frac{n}{(\emph{log}\ n)^{\beta}}\ \Big|\ X_{2n}={\bf 0}\bigg)=0\ \ {\sf GW}-\emph{a.s.}\end{equation}
\end{theorem}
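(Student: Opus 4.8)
The plan is to treat the numerator and the denominator of the conditional probability separately. For a threshold $R_n$ one has ${\sf SRW}_T(\max_{j\leq 2n}|X_j|\geq R_n\mid X_{2n}={\bf 0}) = 1-{\sf SRW}_T(\max_{j\leq 2n}|X_j|<R_n,\ X_{2n}={\bf 0})/{\sf SRW}_T(X_{2n}={\bf 0})$, so \eqref{bndonmaxR} reduces to showing that, ${\sf GW}$-a.s., the confined bridge probability ${\sf SRW}_{\bf T}(\max_{j\leq 2n}|X_j|<n^{\gamma},\ X_{2n}={\bf 0})$ has strictly smaller exponential order than ${\sf SRW}_{\bf T}(X_{2n}={\bf 0})$, while \eqref{ubndonmaxd} reduces to showing that at the larger scale $n/(\log n)^{\beta}$ the confined bridge carries asymptotically all of the bridge mass, that is, ${\sf SRW}_{\bf T}(\max_{j\leq 2n}|X_j|\geq n/(\log n)^{\beta},\ X_{2n}={\bf 0})=o({\sf SRW}_{\bf T}(X_{2n}={\bf 0}))$. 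In either case the job is to bound ${\sf SRW}_{\bf T}(X_{2n}={\bf 0})$ from below and the relevant ``bad'' bridge probability from above, sharply enough that the $n^{\gamma}$ versus $n/(\log n)^{\beta}$ gap becomes visible.

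For the lower bound \eqref{bndonmaxR} I would first pass to the biased walk: introduce $\{X_j\}^{\beta}$ and its law ${\sf BRW}_T$, tuned so that excursions of the walk into the bushier parts of $T$ are shortened while the minimal-degree skeleton is left essentially intact, and so that the estimates of Lemma~\ref{lemma:compSRWm} bound ${\sf SRW}_T$ on events of the form $\{\max_{j\leq 2n}|X_j|\leq R,\ X_{2n}={\bf 0}\}$ (and on $\{X_{2n}={\bf 0}\}$) in terms of ${\sf BRW}_T$ on the same events, losing only subexponentially. It then suffices to estimate ${\sf BRW}_{\bf T}(\max_{j\leq 2n}|X_j|\leq n^{\gamma},\ X_{2n}={\bf 0})$. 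The key point is that, under the bridge constraint, the biased walk tends to spend the bulk of its $2n$ steps inside $m$-ary subtrees of $T$; capping the displacement at $n^{\gamma}$ caps the depth of any such subtree, and a small-deviations/return-time bound for the walk inside an $m$-ary subtree of depth at most $n^{\gamma}$ (the tree analogue of Lemma~\ref{lemma:t3mogl34g}) bounds this probability above by a quantity of strictly smaller exponential order than ${\sf SRW}_{\bf T}(X_{2n}={\bf 0})$. To finish I would establish, ${\sf GW}$-a.s., the existence of $m$-ary subtrees of ${\bf T}$ of depth of order $n/(\log n)^{\beta}$ at distance of the same order from the root (a statement about the sizes and frequencies of such subtrees), and show that routing the walk out to such a subtree, letting it burn most of its $2n$ steps there, and bringing it back gives a lower bound on ${\sf SRW}_{\bf T}(X_{2n}={\bf 0})$ dominating the upper bound just obtained for the confined bridge; this yields \eqref{bndonmaxR}.

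For the upper bound \eqref{ubndonmaxd} no change of measure is needed. I would estimate ${\sf SRW}_{\bf T}(\max_{j\leq 2n}|X_j|\leq n/(\log n)^{\beta},\ X_{2n}={\bf 0})$ directly, again via the $m$-ary-subtree statistics of ${\bf T}$ together with the occupation-time bounds for the walk inside such subtrees, showing in particular that this confined bridge probability already attains the exponential order of the full bridge probability ${\sf SRW}_{\bf T}(X_{2n}={\bf 0})$; and I would separately show that the complementary bridge event --- reaching depth at least $n/(\log n)^{\beta}$ and nonetheless returning to the root by time $2n$ --- has strictly smaller exponential order, since it forces the walk to cross so many bushy (supra-$m$) vertices, each carrying a genuine exponential cost, relative to the time left over for wasting. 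Taking the ratio, the conditional probability of this event tends to $0$, giving \eqref{ubndonmaxd} and completing the proof.

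The hard part will be, exactly as in the $\mathbb{Z}$ setting of \cite{GP}, obtaining matching control of the two competing exponential orders --- the bare bridge probability ${\sf SRW}_{\bf T}(X_{2n}={\bf 0})$ and the confined one --- sharply enough that the logarithmic corrections survive. Concretely this demands (i) the comparison in Lemma~\ref{lemma:compSRWm} to be two-sided with only subexponential loss, (ii) ${\sf GW}$-a.s.\ asymptotics for how deep, and how frequently, $m$-ary subtrees of ${\bf T}$ occur, and (iii) tight bounds on how long the (biased) walk lingers inside such a subtree before the bridge constraint pushes it back out; it is the interplay of these three ingredients that confines the maximal displacement to the window between $n^{\gamma}$ and $n/(\log n)^{\beta}$.
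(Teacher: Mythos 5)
Your plan is structurally close to the paper's: pass to ${\sf BRW}_T$ via Lemma~\ref{lemma:compSRWm}, reduce each part of the theorem to a comparison of exponential orders between a confined bridge probability and the full bridge probability, and organize the tree estimates around $m$-ary subtrees. But one of your specific claims is false, and a second one is not the mechanism that actually closes the argument.

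First, you propose to find ``$m$-ary subtrees of ${\bf T}$ of depth of order $n/(\log n)^{\beta}$ at distance of the same order from the root.'' There are (a.s.) no such subtrees. By Lemma~\ref{lemma:maxdepthreg}, on a supercritical GW tree with minimal degree $m$ and ${\bf P}(Z>m)>0$, the depth of the deepest $m$-ary subtree accessible within level $n$ is ${\sf GW}$-a.s.\ $(1+o(1))\log n/\log m$ --- logarithmic, not polynomial. So a lower bound on ${\sf SRW}_{\bf T}(X_{2n}={\bf 0})$ cannot come from routing the walk into a polynomially deep $m$-ary subtree. The correct move is to route to a vertex at height of order $n^{\gamma}$, where the accessible $m$-ary subtree has depth $\sim\gamma\log n/\log m$, apply the small-deviations estimate (Lemma~\ref{lemma:t3mogl34g}) at that $\log n$ scale, and only afterwards send $\gamma\uparrow 1$. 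This produces, ${\sf GW}$-a.s.,
\[
\log{\sf SRW}_{\bf T}\Big(\max_{j\leq 2n}|X_j|\leq n^{\gamma},\ X_{2n}={\bf 0}\Big)= n\log M - \big(1+o(1)\big)\,\frac{(\pi\log m)^2}{\gamma^2}\cdot\frac{n}{(\log n)^2},
\]
and it is the $\gamma$-dependence of the $n/(\log n)^2$ correction, matched on both sides by a ${\sf BRW}$ lower bound and a return-time/stopping-time upper bound, that yields \eqref{bndonmaxR}. Your sketch acknowledges that the two-sided control is the hard part but omits the fact that the subtree depths are forced to be logarithmic, which is exactly what pins the exponent at the $n/(\log n)^2$ scale.

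Second, for \eqref{ubndonmaxd}, your proposed bound on the large-displacement bridge via ``bushy vertices each carrying a genuine exponential cost'' is not what is used. The operative estimate is much more elementary: under ${\sf BRW}_T$ the depth process is stochastically dominated by simple random walk on $\mathbb{Z}$, so reaching depth $n/(\log n)^{\beta}$ by time $2n$ already costs $\exp\big(-2n/(\log n)^{1+\beta}(1+o(1))\big)$, bridge constraint or not, and the $M^n$ prefactor from Lemma~\ref{lemma:compSRWm} is common to both sides. Since $1+\beta<2$, this decays strictly faster than the $\exp\big(-cn/(\log n)^2\big)$ order of the full bridge probability obtained by taking $\gamma\uparrow 1$ in the display above, so the conditional probability tends to $0$. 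Without first identifying the subleading order of ${\sf SRW}_{\bf T}(X_{2n}={\bf 0})$ as $n/(\log n)^2$ on the nose, the comparison at scale $n/(\log n)^{\beta}$ has no purchase, and the gap between $n^{\gamma}$ and $n/(\log n)^{\beta}$ would be invisible.
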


\bigskip
\noindent
In order to prove the above result, we will need to begin by establishing two lemmas, the first of which involves comparing ${\sf SRW}_T$ to a different measure associated with a specific type of biased random walk on $T$ that we represent as $\{X_j\}^{\beta}$.  To define this measure, which will be denoted as ${\sf BRW}_T$, we let $m:=\text{min}\{j:{\bf P}(Z=j)>0\}$, and we define the transition probabilities for our biased random walk as follows: 

\medskip
\noindent
(i) Let $v^{(i)}$ represent any child of a vertex $v$.  If $v$ is the root then set $${\sf BRW}_T(X_{j+1}=v^{(i)}|X_j=v)=\frac{1}{\text{deg}(v)}.$$

\medskip
\noindent
(ii) If $v$ is a non-root vertex with parent $u$, then set $${\sf BRW}_T(X_{j+1}=u|X_j=v)=\frac{m}{\text{deg}(v)+m},\ \text{and}\ {\sf BRW}_T(X_{j+1}=v^{(i)}|X_j=v)=\frac{1}{\text{deg}(v)+m}$$ where $v^{(i)}$ again represents any child of $v$.

\bigskip
\noindent
Having defined ${\sf BRW}_T$, we can now state the first lemma.  As noted in the introduction, this lemma, as well as its application to the proof of \eqref{bndonmaxR}, is largely adapted from a method used by Gantert and Peterson in \cite{GP} (see Lemma 4.2 and Proposition 4.3 in their paper).

\begin{lemma}\label{lemma:compSRWm}
Let $M=\frac{4m}{(m+1)^2}$ and define the random variable $$B_n:=\big|\{j<2n:X_j ={\bf 0}\ \text{or }\emph{deg}(X_j)>m\}\big|.$$ Then there exist constants $c_1,c_2\in (0,1)$ (depending only on the value of $m$) such that for any surviving tree $T$ for which all vertices have degree at least $m$, and any event $A$ in the path space of $T$ that depends only on the first $2n$ steps, and for which all paths in $A$ begin at ${\bf 0}$ and satisfy $X_{2n}={\bf 0}$, we have \begin{equation}\label{pbounds}M^n{\bf E}_{{\sf BRW}_T}\Big[c_1^{B_n}{\bf 1}_A\Big]\leq{\sf SRW}_T(A)\leq M^n{\bf E}_{{\sf BRW}_T}\Big[c_2^{B_n}{\bf 1}_A\Big].\end{equation}
\end{lemma}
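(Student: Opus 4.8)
The plan is to prove the two inequalities in \eqref{pbounds} \emph{path by path}. Fix a tree $T$ as in the statement; along any excursion of length $2n$ from ${\bf 0}$ only finitely many vertices are visited, so $A$ is a finite collection of nearest-neighbor paths $\omega=({\bf 0}=\omega_0,\omega_1,\dots,\omega_{2n}={\bf 0})$, and it suffices to compare the single-path weights ${\sf SRW}_T(\omega)$ and ${\sf BRW}_T(\omega)$ and then sum over $\omega\in A$. First I would note that the transition rule defining ${\sf BRW}_T$ is an honest Markov rule (its weights out of any vertex sum to $1$), so both ${\sf SRW}_T(\omega)$ and ${\sf BRW}_T(\omega)$ are products of $2n$ one-step probabilities, and hence ${\sf SRW}_T(\omega)/{\sf BRW}_T(\omega)$ is a product of $2n$ one-step likelihood ratios.

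Next I would evaluate these ratios. A step out of the root contributes $1$ (both walks move to a uniformly chosen child). A step out of a non-root vertex $v$ of degree $d:=\text{deg}(v)$ contributes $\tfrac{d+m}{d+1}$ if it is a step to a child and $\tfrac1m\cdot\tfrac{d+m}{d+1}$ if it is a step to the parent. Since $\omega$ starts and ends at the root, exactly $n$ of its $2n$ steps are directed toward the root, and every such step issues from a non-root vertex; pulling out the corresponding $\tfrac1m$ factors gives a global $m^{-n}$ and therefore
$$\frac{{\sf SRW}_T(\omega)}{{\sf BRW}_T(\omega)}=m^{-n}\prod_{j:\,\omega_j\neq{\bf 0}}\frac{\text{deg}(\omega_j)+m}{\text{deg}(\omega_j)+1}.$$
I would then split this product according to whether $\text{deg}(\omega_j)=m$ or $\text{deg}(\omega_j)>m$, using that all degrees are at least $m$: there are exactly $2n-B_n(\omega)$ step-times $j$ with $\omega_j\neq{\bf 0}$ and $\text{deg}(\omega_j)=m$, each supplying a factor $\tfrac{2m}{m+1}$, and at most $B_n(\omega)$ step-times supplying a factor $\tfrac{d+m}{d+1}$ with $d\ge m+1$. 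Here the definition of $B_n$ — counting the root \emph{together with} the above-minimal-degree vertices — is exactly what makes the algebraic identity $m^{-n}\big(\tfrac{2m}{m+1}\big)^{2n}=\big(\tfrac{4m}{(m+1)^2}\big)^{n}=M^n$ do its work, yielding
$$\frac{{\sf SRW}_T(\omega)}{{\sf BRW}_T(\omega)}=M^n\Big(\tfrac{m+1}{2m}\Big)^{B_n(\omega)}\Pi(\omega),$$
where $\Pi(\omega)$ is a product of at most $B_n(\omega)$ factors of the form $\tfrac{d+m}{d+1}$ with integer $d\ge m+1$.

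To finish, I would bound $\Pi(\omega)$: since $d\mapsto\tfrac{d+m}{d+1}=1+\tfrac{m-1}{d+1}$ is nonincreasing in $d$, each such factor lies in $\big[1,\tfrac{2m+1}{m+2}\big]$, so $1\le\Pi(\omega)\le\big(\tfrac{2m+1}{m+2}\big)^{B_n(\omega)}$. Setting $c_1:=\tfrac{m+1}{2m}$ and $c_2:=\tfrac{(m+1)(2m+1)}{2m(m+2)}$, this gives $M^nc_1^{B_n(\omega)}{\sf BRW}_T(\omega)\le{\sf SRW}_T(\omega)\le M^nc_2^{B_n(\omega)}{\sf BRW}_T(\omega)$ for every $\omega\in A$, and summing over $\omega\in A$ is exactly \eqref{pbounds}. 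Both constants depend only on $m$ and lie in $(0,1)$ because $m\ge2$: $c_1<1\iff m>1$, and $c_2=c_1\cdot\tfrac{2m+1}{m+2}<c_1\cdot\tfrac{2m}{m+1}=1$ (the same comparison also shows $c_1\le c_2$, as required for consistency). I do not expect a genuine obstacle here; the only two points that need care are the bookkeeping that pins the exponent of $\tfrac{2m}{m+1}$ at precisely $2n-B_n$, and the monotonicity estimate on the residual factors, without which one only obtains the vacuous bound $c_2=1$.
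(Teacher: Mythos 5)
Your argument is correct and is essentially the paper's proof: both compute the one-step likelihood ratios ${\sf SRW}_T/{\sf BRW}_T$, extract $m^{-n}$ from the $n$ root-ward steps (all of which issue from non-root vertices), normalize by the value $\tfrac{2m}{m+1}$ attained at degree-$m$ vertices to produce $M^n$, and squeeze the residual factors at root or above-minimal-degree vertices between constants in $(0,1)$. The only difference is cosmetic — you give explicit optimal values $c_1=\tfrac{m+1}{2m}$ and $c_2=\tfrac{(m+1)(2m+1)}{2m(m+2)}$ where the paper merely asserts existence.
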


\begin{proof}
Letting $X_{[0,2n]}=(X_0,X_1,\dots,X_{2n})$ represent the path of the random walk inside of our tree $T$ up to time $2n$, we get \begin{equation}\label{rndforA}{\sf SRW}_T(A)={\bf E}_{{\sf BRW}_T}\bigg[\frac{d{\sf SRW}_T}{d{\sf BRW}_T}X_{[0,2n]}{\bf 1}_A\bigg]\end{equation} where for any ${\bf x}_{[0,2n]}=(x_0,x_1,\dots,x_{2n})$ we have $$\frac{d{\sf SRW}_T}{d{\sf BRW}_T}({\bf x}_{[0,2n]})=\frac{{\sf SRW}_T(X_j=x_j\ \forall\ j\leq 2n)}{{\sf BRW}_T(X_j=x_j\ \forall\ j\leq 2n)}=\prod_{j=0}^{2n-1}\frac{{\sf SRW}_T(X_{j+1}=x_{j+1}|X_j=x_j)}{{\sf BRW}_T(X_{j+1}=x_{j+1}|X_j=x_j)}.$$Using the expressions for the transition probabilities of $\{X_j\}^{\beta}$ given above, we observe that for each $j$ $$\frac{{\sf SRW}_T(X_{j+1}=x_{j+1}|X_j=x_j)}{{\sf BRW}_T(X_{j+1}=x_{j+1}|X_j=x_j)}=\left\{\begin{array}{ll}\frac{\text{deg}(x_j)+m}{\text{deg}(x_j)+1}&\text{if }x_j\neq{\bf 0}\text{ and }|x_{j+1}|=|x_j|+1\\ \frac{\text{deg}(x_j)+m}{m(\text{deg}(x_j)+1)}&\text{if }x_j\neq{\bf 0}\text{ and }|x_{j+1}|=|x_j|-1 \\ 1 &\text{if }x_j={\bf 0}\end{array}\right.$$Since $X_{2n}=\bf{0}$ implies exactly $n$ out of the first $2n$ steps are towards the root, it now follows that \begin{equation}\label{simpforpr}\frac{d{\sf SRW}_T}{d{\sf BRW}_T}({\bf x}_{[0,2n]})=m^{-n}\prod_{j=0}^{2n-1}\bigg(\frac{\text{deg}(x_j)+m}{\text{deg}(x_j)+1}-\frac{m-1}{\text{deg}(x_j)+1}{\bf 1}_{\{x_j={\bf 0}\}}\bigg)\ \ \ \ \forall\ {\bf x}_{[0,2n]}\in\{X_{2n}={\bf 0}\}.\end{equation}Noting that the expression inside the product is maximized at $\text{deg}(x_j)=m$, where it takes the value $\frac{2m}{m+1}$, and observing that it is strictly decreasing with respect to $\text{deg}(x_j)$ for $x_j\neq\bf{0}$ (while the case where $x_j=\bf{0}$ gives the minimum value which is $1$), we see that there must exist $c_1,c_2\in(0,1)$ (depending only on $m$) such that $$c_1\frac{2m}{m+1}\leq\frac{\text{deg}(v)+m}{\text{deg}(v)+1}-\frac{m-1}{\text{deg}(v)+1}{\bf 1}_{\{v={\bf 0}\}}\leq c_2\frac{2m}{m+1} \ \ \ \ \forall\ v\ \text{s.t. }v={\bf 0}\ \text{or }\text{deg}(v)>m.$$Hence, combining this with \eqref{simpforpr} we obtain the following inequalities:\begin{equation}\label{bndsforlpr}m^{-n}\Big(\frac{2m}{m+1}\Big)^{2n}c_1^{B_n}\leq\frac{d{\sf SRW}_T}{d{\sf BRW}_T}({\bf x}_{[0,2n]})\leq m^{-n}\Big(\frac{2m}{m+1}\Big)^{2n}c_2^{B_n}\ \ \ \ \forall\ {\bf x}_{[0,2n]}\in\{X_{2n}={\bf 0}\}.\end{equation}Noting that $m^{-n}\Big(\frac{2m}{m+1}\Big)^{2n}=M^n$ and combining \eqref{bndsforlpr} with \eqref{rndforA} now completes the proof of the lemma.  
\end{proof}

\bigskip
In order to present the second lemma that will be needed in the proof of Theorem \ref{theorem:diffbound}, we'll need the following definitions: First, for any tree $T$ and any vertex $v\in T$, let $w_T(v)$ represent the maximum depth of an $m$-regular tree rooted at $v$, i.e. $w_T(v)$ is the maximum value for which all $j$th generation descendants of $v$ (for $j<w_T(v)$) have exactly $m$ offspring.  In addition, for every $n\geq 0$, we define $W_{n,k}(T):=\text{max}\{d_T(v):|v|=n,\ \text{deg}(u)\leq k\ \forall\ u<v\}$ (note that when $m=1$, $w_T(v)$ and $W_{n,k}(T)$ are equivalent to the quantities $d_T(v)$ and $D_{n,k}(T)$ that were defined above Lemma \ref{lemma:maxdepthline}).  Unless otherwise stated, additional notation appearing below that was also in the statement and proof of Lemma \ref{lemma:maxdepthline}, will be assumed to have the same definitions as earlier.

\medskip
\begin{lemma}\label{lemma:maxdepthreg}
If $k>m$ satisfies \begin{equation}\label{condtrunc}\sum_{j=m}^k {\bf P}(Z=j)\cdot j>1,\end{equation}then \begin{equation}\label{refordnk}D_{n,k}({\bf T})=(1+o(1))\frac{\emph{log}\ n}{\emph{log}\ m}\ \ {\sf GW}-\emph{a.s}.\end{equation} conditioned on $\mathscr{A}_k$.
\end{lemma}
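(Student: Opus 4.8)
The plan is to prove \eqref{refordnk} by establishing matching asymptotic upper and lower bounds for $W_{n,k}({\bf T}):=\max\{w_{{\bf T}}(v):|v|=n,\ \deg(u)\le k\ \forall\,u<v\}$ (the quantity written $D_{n,k}$ in the statement), in each case via a first-moment estimate and the Borel-Cantelli lemma, in the spirit of Lemma \ref{lemma:maxdepthline} and Remark 1. The one combinatorial input is that, for a fresh Galton-Watson root $v$, the event $\{w_{{\bf T}}(v)\ge\ell\}$ forces each of the $1+m+\cdots+m^{\ell-1}=\tfrac{m^{\ell}-1}{m-1}$ vertices in generations $0,\dots,\ell-1$ below $v$ to have exactly $m$ children, so ${\sf GW}\big(w_{{\bf T}}({\bf 0})\ge\ell\big)={\bf P}(Z=m)^{(m^{\ell}-1)/(m-1)}$, which is doubly exponentially small in $\ell$.

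For the upper bound, by the many-to-one formula the expected number of level $n$ vertices $v$ with $\deg(u)\le k$ for all $u<v$ and $w_{{\bf T}}(v)\ge\ell$ equals $\nu_k^{\,n}\,{\bf P}(Z=m)^{(m^{\ell}-1)/(m-1)}$, where $\nu_k:=\sum_{j=m}^{k}{\bf P}(Z=j)\,j$ is the mean of the truncated offspring distribution $Z\cdot 1_{Z\le k}$ (equivalently $\tilde\mu_k$, the mean of the reduced tree). Choosing $\ell_n:=\lceil\log_m(C_k n)\rceil$ with $C_k$ large enough that $\tfrac{C_k}{m-1}\log\tfrac{1}{{\bf P}(Z=m)}>\log\nu_k$ makes this expectation summable in $n$, so Borel-Cantelli gives $W_{n,k}({\bf T})<\ell_n=\tfrac{\log n}{\log m}+O(1)$ eventually, ${\sf GW}$-a.s.\ and hence also ${\sf GW}^{(k)}$-a.s., whence $\limsup_n W_{n,k}({\bf T})\big/\big(\tfrac{\log n}{\log m}\big)\le 1$.

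For the lower bound I would work under ${\sf GW}^{(k)}$ and pass to the reduced tree $\tilde{{\bf T}}^{(k)}$, which is a supercritical Galton-Watson tree with no leaves of mean $\nu_k>1$, so $Z^{(k)}_n:=|\partial\tilde{{\bf T}}^{(k)}_n|\ge r^{n}$ eventually a.s.\ for every fixed $r<\nu_k$ by Kesten-Stigum. Conditioning on ${\bf T}_n$ together with the indicators $\chi(v):=1_{\{{\bf T}^{(k)}(v)\ \text{infinite}\}}$, $|v|=n$, which jointly determine $\tilde{{\bf T}}^{(k)}$ up to level $n$ (in particular $Z^{(k)}_n$ and the set of its level $n$ vertices), the full subtrees rooted at those vertices are conditionally i.i.d.\ with law ${\sf GW}(\cdot\mid\chi({\bf 0})=1)={\sf GW}^{(k)}$; hence the events $\{w_{{\bf T}}(v)\ge\ell\}$ over them are conditionally i.i.d., each of probability ${\sf GW}\big(w_{{\bf T}}({\bf 0})\ge\ell\mid\chi({\bf 0})=1\big)\ge{\bf P}(Z=m)^{(m^{\ell}-1)/(m-1)}$. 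This inequality holds because, conditional on $\{w_{{\bf T}}({\bf 0})\ge\ell\}$, the $m^{\ell}$ generation $\ell$ vertices of the $m$-ary block carry independent fresh subtrees, the block itself survives the degree $k$ truncation (all its degrees equal $m\le k$), and at least one of those $m^{\ell}$ subtrees is infinite in ${\bf T}^{(k)}$ with probability $1-q_k^{m^{\ell}}\ge 1-q_k={\sf GW}(\chi({\bf 0})=1)$, where $q_k$ is the extinction probability of ${\bf T}^{(k)}$. Taking $\ell_n:=\lfloor(1-\epsilon)\log_m n\rfloor$, the conditional probability that no level $n$ vertex of $\tilde{{\bf T}}^{(k)}$ attains $w_{{\bf T}}(v)\ge\ell_n$ is at most $\big(1-{\bf P}(Z=m)^{n^{1-\epsilon}/(m-1)}\big)^{Z^{(k)}_n}$, which on the event $\{Z^{(k)}_n\ge r^{n}\}$ is at most $\exp\!\big(-r^{n}{\bf P}(Z=m)^{n^{1-\epsilon}/(m-1)}\big)$, a summable and in fact doubly exponentially small sequence. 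Combining this Borel-Cantelli bound with the a.s.\ lower bound $Z^{(k)}_n\ge r^{n}$ yields $W_{n,k}({\bf T})\ge\ell_n$ eventually a.s., so $\liminf_n W_{n,k}({\bf T})\big/\big(\tfrac{\log n}{\log m}\big)\ge 1-\epsilon$; letting $\epsilon\downarrow 0$ along a sequence and combining with the upper bound gives \eqref{refordnk}.

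The crux is the conditional estimate ${\sf GW}\big(w_{{\bf T}}({\bf 0})\ge\ell\mid\chi({\bf 0})=1\big)\ge{\bf P}(Z=m)^{(m^{\ell}-1)/(m-1)}$: the $m$-ary block being sought is a finite configuration, so conditioning on survival of ${\bf T}^{(k)}$ could a priori cost a factor, but its $m^{\ell}$ bottom vertices supply that many independent chances for ${\bf T}^{(k)}$ to be infinite, which exactly absorbs the conditioning. Once this is settled, the geometric growth of $Z^{(k)}_n$ makes the failure probabilities doubly exponentially small, so both Borel-Cantelli steps are routine; the only bookkeeping point is that the maximum of $w_{{\bf T}}$ over level $n$ vertices of $\tilde{{\bf T}}^{(k)}$ is dominated by $W_{n,k}({\bf T})$, since every vertex of $\tilde{{\bf T}}^{(k)}$ has all of its ancestors of degree $\le k$, so the lower bound transfers.
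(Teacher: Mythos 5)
Your proof is correct and follows essentially the same first-moment/Borel--Cantelli strategy as the paper, with two small simplifications. For the upper bound you compute the expectation $\nu_k^n\,{\bf P}(Z=m)^{(m^\ell-1)/(m-1)}$ directly and choose $\ell_n=\lceil\log_m(C_kn)\rceil$, whereas the paper drops the degree-$\le k$ ancestor restriction, passes to $D_{n,\infty}$, and conditions on $\{Z^{(\infty)}_n\le L^n\}$; both give the same $\limsup\le 1$. For the lower bound the paper works with $d^*_T(v)$ (requiring the $m$-ary block to lie inside $\tilde T^{(k)}$, with the exact success probability $\rho_k^{(m^\ell-1)/(m-1)}$) and then produces an explicit exponential tail bound on $Z^{(k)}_n$, whereas you observe that conditioning on survival can only increase the probability of finding the $m$-ary block, i.e.\ ${\sf GW}^{(k)}(w_T({\bf 0})\ge\ell)\ge{\bf P}(Z=m)^{(m^\ell-1)/(m-1)}$, and then use Kesten--Stigum together with the two-event i.o.\ decomposition exactly as in the paper's Lemma~\ref{lemma:maxdepthline}; this is cleaner and perfectly valid, since the truncated offspring law is bounded so Kesten--Stigum applies.
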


\begin{proof}
For any vertex $v\in\tilde{T}^{(k)}$, let $d^*_T(v)$ equal the height of the largest m-ary subtree that is in both $T$ and $\tilde{T}^{(k)}$, and is rooted at $v$.  In addition, let $D^*_{n,k}=\text{max}\{d^*_T(v):|v|=n,\ v\in \tilde{T}^{(k)}\}$, and note that $D^*_{n,k}\leq D_{n,k}$.  Now letting $\rho_k$ refer to the value ${\bf P}(Z=m)\cdot\Big({\sf GW}(\mathscr{A}_k)\Big)^{m-1}$, we find that for any $\ell>0$, \begin{equation}\label{bndingpl}{\sf GW}^{(k)}\big(D^*_{n,k}<\ell\ |\ Z^{(k)}_n\big)=\Big(1-\rho_k^{1+m+\dots +m^{\ell -1}}\Big)^{Z^{(k)}_n}=\Big(1-\rho_k^{\frac{m^{\ell}-1}{m-1}}\Big)^{Z^{(k)}_n}.\end{equation}Now if we have $r,\delta$ where $r>1$ and $0<\delta<1$, then it follows from \eqref{bndingpl} that \begin{equation}\label{bndcondgw}{\sf GW}^{(k)}\Big(D^*_{n,k}<(1-\delta)\frac{\text{log}\ n}{\text{log}\ m}\ \Big|\ Z^{(k)}_n\geq r^n\Big)\leq\Big(1-\rho_k^{\frac{m}{m-1}n^{1-\delta}}\Big)^{r^n}\leq e^{-r^n\rho_k^{\frac{m}{m-1}n^{1-\delta}}}=e^{-r^{n+o(n)}}.\end{equation} If we then define the sequence $\{v_n\}$ in $\tilde{{\bf T}}_k$ (conditioned to survive) so that $v_0={\bf 0}$ and, for every $j\geq 1$, $v_j$ is the left most child of $v_{j-1}$, then by a simple application of large deviations, there must exist constants $c,r'>0$ such that the number of $v_j$ (for $j\leq \frac{n}{2}$) that have at least one additional child besides $v_{j+1}$, is greater than $cn$ with probability at least $1-e^{-r'n}$. Since the subtrees rooted at each of these additional children are independent, and because $Z_n^{(k)}$ is greater than or equal to $\tilde{\mu}_k^n$ with probability bounded away from $0$ (this follows from a basic martingale argument), we can conclude that  there exist $r_1,r_2>0$ such that $${\sf GW}^{(k)}\big(Z^{(k)}_n\leq (1+r_1)^n\big)\leq e^{-r_2 n}.$$Combining this last inequality with \eqref{bndcondgw}, and recalling that $D^*_{n,k}\leq D_{n,k}$, we now observe that $${\sf GW}^{(k)}\Big(D_{n,k}<(1-\delta)\frac{\text{log}\ n}{\text{log}\ m}\Big)\leq {\sf GW}^{(k)}\Big(D^*_{n,k}<(1-\delta)\frac{\text{log}\ n}{\text{log}\ m}\Big)\leq e^{-(1+r_1)^{n+o(n)}}+e^{-r_2 n}.$$Since the expression on the right is summable and $\delta$ was arbitrary, it now follows from Borel-Cantelli that \begin{equation}\label{linfbnd}\text{liminf}\frac{D_{n,k}}{(\text{log}\ n/\text{log}\ m)}\geq 1\ \ \ \ {\sf GW}^{(k)}-\text{a.s.}\end{equation}

For the other direction we note that, since $D_{n,k}$ is increasing with respect to $k$, it suffices to just prove the desired inequality for the case of $k=\infty$ (note ${\sf GW}^{(\infty)}$ and ${\sf GW}$ are the same, as are $D^*_{n,\infty}$ and $D_{n,\infty}$).  Hence, if we take any $L>\mu$ (recall $\mu={\bf E}[Z]$) and let $\alpha:={\bf P}(Z=m)$, then it follows from \eqref{bndingpl} that\begin{align}\label{bndotherdir}{\sf GW}\Big(D_{n,\infty}\geq (1+\delta)\frac{\text{log}\ n}{\text{log}\ m}\Big)&\leq 1-\Big(1-\alpha^{\frac{n^{1+\delta}-1}{m-1}}\Big)^{L^n}+{\sf GW}(Z^{(\infty)}_n>L^n) \\ &\leq 1-e^{-(1+o(1))\alpha^{\frac{n^{1+\delta}-1}{m-1}}L^n}+(\mu/L)^n=\alpha^{n^{1+\delta+o(1)}}+(\mu/L)^n.\nonumber\end{align}Since this is once again summable and $\delta$ is arbitrary, we now find that for every $k$ satisfying \eqref{condtrunc}, $$\text{limsup}\frac{D_{n,k}}{(\text{log}\ n/\text{log}\ m)}\leq 1\ \ \ \ {\sf GW}^{(k)}-\text{a.s.}$$Combining this with \eqref{linfbnd} now completes the proof of the lemma.
\end{proof}

\medskip
\begin{proof}[Proof of Theorem \ref{theorem:diffbound}]
In order to establish \eqref{bndonmaxR}, it will suffice to show that for every $\gamma_1,\gamma_2$ satisfying $0<\gamma_1<\gamma_2<1$, we have $$\underset{n\to\infty}{\text{lim}}\frac{{\sf SRW}_{\bf{T}}\bigg(\underset{j\leq 2n}{\text{max}}|X_j|\leq n^{\gamma_2},\ X_{2n}={\bf 0}\bigg)}{{\sf SRW}_{\bf{T}}\bigg(\underset{j\leq 2n}{\text{max}}|X_j|\leq n^{\gamma_1},\  X_{2n}={\bf 0}\bigg)}=\infty\ \ {\sf GW}\text{--a.s.}$$We will accomplish this by proving an even stronger statement, which is that for any $\gamma\in (0,1)$, \begin{equation}\label{preqforpr}\underset{n\to\infty}{\text{lim}}\frac{(\text{log}\ n)^2}{n}\bigg[\text{log}\Big[{\sf SRW}_{{\bf T}}\big(\underset{j\leq 2n}{\text{max}}|X_j|\leq n^{\gamma},\ X_{2n}={\bf 0}\big)\Big]-n\text{log}\ M\bigg]=\frac{-(\pi\text{log}\ m)^2}{\gamma^2}\ \ \ \ {\sf GW}\text{--a.s.}\end{equation}Our first step in proving this last equality is to note that\begin{align}\label{inflrbnd}{\sf SRW}_{\bf{T}}\bigg(\underset{j\leq 2n}{\text{max}}|X_j|\leq n^{\gamma},\ X_{2n}={\bf 0}\bigg)&\geq{\sf SRW}_{\bf{T}}\bigg(\underset{j\leq 2n}{\text{max}}|X_j|\leq n^{\gamma},\ X_{2n}={\bf 0},\ B_n\leq\frac{2n}{(\text{log}\ n)^3}\bigg) \\ &\geq M^n c_1^{2n/(\text{log}\ n)^3}{\sf BRW}_{\bf{T}}\bigg(\underset{j\leq 2n}{\text{max}}|X_j|\leq n^{\gamma},\ X_{2n}={\bf 0},\ B_n\leq\frac{2n}{(\text{log}\ n)^3}\bigg)\ \ {\sf GW}-\text{a.s.}\nonumber\end{align}(where the second inequality follows from Lemma \ref{lemma:compSRWm}).  Now observe that on account of Lemma \ref{lemma:maxdepthreg} (along with \eqref{embtbnddeg} from section 2), we know that for ${\sf GW}-\text{a.s.}$ every $T$, there exists an integer $L\geq m$ (for which \eqref{condtrunc} is satisfied) such that $T$ satisfies the equality in \eqref{refordnk} for every $k\geq L$ (where the value of $L$ can depend on the specific tree).  Hence, in order to prove \eqref{preqforpr}, it will suffice to show that the equality in it holds for every such $T$, provided we also assume $T$ has minimum degree no smaller than $m$.  

Based on the properties of $T$, we know that there must exist a sequence of vertices $v_n\in T$ such that $|v_n|=(1+o(1))n^{\gamma}$, $\text{deg}(u)\leq L\ \forall\ u<v_n$, and where for each $n$ there exists an $m$-ary subtree of height $h_n=(1+o(1))\frac{\gamma\text{log}\ n}{\text{log}\ m}$ rooted at $v_n$, such that $|v_n|+h_n\leq n^{\gamma}$.  Next, we define the event $E_n$ as the intersection of the following three events: First, we let $E_{n,1}$ be the event that the random walk on $T$ arrives at $v_n$ at time $|v_n|$, and then takes another $\lfloor{h_n/2\rfloor}$ consecutive steps away from the root.  $E_{n,2}$ we define to be the event that the walk is inside the $m$-ary subtree rooted at $v$ at time $|v_n|+\lfloor{h_n/2\rfloor}$, and stays inside of it until a time $2n-k$, at which point it resides at a vertex at height $k$.  Finally, $E_{n,3}$ is defined to be the event that the walk is at the root at time $2n$.  Now we observe that because we're assuming that $\gamma<1$, it follows that for all $n$ sufficiently large, we have \begin{equation}\label{econtint}E_n\subseteq\{\underset{j\leq 2n}{\text{max}}|X_j|\leq n^{\gamma}\}\cap\{X_{2n}={\bf 0}\}\cap\{B_n\leq 2n/(\text{log}\ n)^3\}.\end{equation}In addition, we note that \begin{equation}\label{lbndforen}{\sf BRW}_T(E_n)=\prod_{j=1}^3{\sf BRW}_T(E_{n,j}|\{E_{n,1}\cup\dots\cup E_{n,j-1}\})\geq(L+m)^{-2n^{\gamma}}{\sf BRW}_T(E_{n,2}|E_{n,1}),\end{equation}where the inequality on the right follows from the definitions of $\sf{BRW}_T$ and the $E_{n,j}$'s, and the fact that all of the ancestors of $v_n$ have degree less than or equal to $L$.  Since the definition of ${\sf BRW}_T$ also implies that for any vertex $v$ inside the $m$-regular subtree rooted at $v_n$ we have $${\sf BRW}_T(|X_{j+1}|=|v|+1\ |\ X_j=v)=\frac{1}{2},$$ we can also conclude that \begin{equation}\label{pforSRWonzbnd}{\sf BRW}_T(E_{n,2}|E_{n,1})\geq{\sf SRW}_{\mathbb{Z}}\big(\underset{j\leq 2n}{\text{max}}|X_j|\leq\Bigl\lfloor{\frac{h_n-1}{2}\Bigr\rfloor}\big).\end{equation}Now combining \eqref{pforSRWonzbnd} with Lemma \ref{lemma:t3mogl34g}, and recalling that $h_n=(1+o(1))\frac{\gamma\text{log}\ n}{\text{log}\ m}$, we see that \begin{align*}\frac{h_n^2}{n}\text{log}\Big[{\sf SRW}_{\mathbb{Z}}\Big(\underset{j\leq 2n}{\text{max}}|X_j|\leq\Bigl\lfloor{\frac{h_n-1}{2}\Bigr\rfloor}\Big)\Big]=-\pi^2+o(1)&\implies\frac{\Big(\frac{\gamma\text{log}\ n}{\text{log}\ m}\Big)^2}{n}\text{log}\Big[{\sf SRW}_{\mathbb{Z}}\Big(\underset{j\leq 2n}{\text{max}}|X_j|\leq\Bigl\lfloor{\frac{h_n-1}{2}\Bigr\rfloor}\Big)\Big]=-\pi^2+o(1) \\ &\implies\frac{\Big(\frac{\gamma\text{log}\ n}{\text{log}\ m}\Big)^2}{n}\text{log}\Big[{\sf BRW}_T(E_{n,2}|E_{n,1})\Big]\geq -\pi^2+o(1).\end{align*}Alongside \eqref{econtint} and \eqref{lbndforen}, this now implies that \begin{align*}\frac{\Big(\frac{\gamma\text{log}\ n}{\text{log}\ m}\Big)^2}{n}\text{log}&\Big[{\sf BRW}_T\Big(\underset{j\leq 2n}{\text{max}}|X_j|\leq n^{\gamma}, X_{2n}={\bf 0},B_n\leq 2n/(\text{log}\ n)^3\Big)\Big] \\ &\geq\frac{\Big(\frac{\gamma\text{log}\ n}{\text{log}\ m}\Big)^2}{n}\bigg(-2n^{\gamma}\text{log}(L+m)+\text{log}\big[{\sf BRW}_T(E_{n,2}|E_{n,1})\big]\bigg)\geq -\pi^2+o(1).\end{align*}Finally, combining this with \eqref{inflrbnd} we get \begin{equation}\label{feqlbndlog}\frac{(\text{log}\ n)^2}{n}\bigg[\text{log}\Big[{\sf SRW}_T\big(\underset{j\leq 2n}{\text{max}}|X_j|\leq n^{\gamma},\ X_{2n}={\bf 0}\big)\Big]-n\text{log}M\bigg]\geq -\Big(\frac{\pi\text{log}\ m}{\gamma}\Big)^2+o(1).\end{equation}

\bigskip
To establish \eqref{preqforpr}, and thus complete the proof of \eqref{bndonmaxR}, it remains to show that the inequality in \eqref{feqlbndlog} holds in the other direction as well.  To do this, we begin by noting that it follows from Lemma \ref{lemma:compSRWm} that for any $\delta$ with $0<\delta<1$, we have $${\sf SRW}_T\Big(\underset{j\leq 2n}{\text{max}}|X_j|\leq n^{\gamma},\ X_{2n}={\bf 0}\Big)\leq M^n\bigg[c_2^{n/(\text{log}\ n)^{2-\delta}}+{\sf BRW}_T\Big(\underset{j\leq 2n}{\text{max}}|X_j|\leq n^{\gamma},\ B_n\leq\frac{n}{(\text{log}\ n)^{2-\delta}}\Big)\bigg].$$Using this inequality, we see that to complete the proof, it will suffice to show that for some such $\delta$\begin{equation}\label{lastinned}\underset{n\to\infty}{\text{limsup}}\frac{(\text{log}\ n)^2}{n}\text{log}\bigg[{\sf BRW}_T\Big(\underset{j\leq 2n}{\text{max}}|X_j|\leq n^{\gamma},\ B_n\leq\frac{n}{(\text{log}\ n)^{2-\delta}}\Big)\bigg]\leq-\Big(\frac{\pi\text{log}\ m}{\gamma}\Big)^2.\end{equation}In order to establish \eqref{lastinned}, our first step is to define the sequence of times $\{\tau_j\}$.  We do this by letting $\tau_0=0$, and for every $j\geq 1$, defining $\tau_j=\text{min}\{i>\tau_{j-1}:X_i={\bf 0}\ \text{or deg}(X_i)>m\}$.  Letting $N=\text{min}\{j:\tau_j\geq 2n\}$, and then noting that the event $\{B_n\leq\frac{n}{(\text{log}\ n)^{2-\delta}}\}$ can be expressed as $\{N\leq\frac{n}{(\text{log}\ n)^{2-\delta}}\}$, we find that \begin{equation}\label{continlset}\{\underset{j\leq 2n}{\text{max}}|X_j|\leq n^{\gamma}\}\cap\{B_n\leq\frac{n}{(\text{log}\ n)^{2-\delta}}\}\subseteq\{N\leq\frac{n}{(\text{log}\ n)^{2-\delta}}\}\cap\{|X_{\tau_j}|\leq n^{\gamma}\ \forall\ j<N\}.\end{equation}Now note that the properties of $T$ imply that for any $\epsilon>0$, there is an $N_{\epsilon}$ such that for all $n\geq N_{\epsilon}$ \begin{equation}\label{djubnd}\underset{0\leq j\leq n+1}{\text{max}}D_{j,\infty}(T)\leq (1+\epsilon)\frac{\text{log}\ n}{\text{log}\ m}.\end{equation}Taking an arbitrary $\epsilon>0$ and allowing the value $n^{\gamma}$ (as referenced in \eqref{lastinned} and \eqref{continlset}) to be greater than $N_{\epsilon}$, we see that for any $v\in T$ with $|v|\leq n^{\gamma}$, we have \begin{equation}\label{stdombet}(\tau_{j+1}-\tau_j|X_{\tau_j}=v)\preceq 1+\Psi(n,\epsilon),\end{equation}where the expression on the left is in terms of ${\sf BRW}_T$, $\Psi(n,\epsilon)$ represents a random variable equal to the exit time of a simple random walk on $\mathbb{Z}$ starting at $x=1$ from the interval $\big[1,\lfloor{(1+\epsilon)\frac{\gamma\text{log}\ n}{\text{log}\ m}\rfloor}\big]$, and the symbol $``\preceq"$ indicates that the expression on the left in \eqref{stdombet} is stochastically dominated by the expression on the right.  Next we let $\Psi_1(n,\epsilon),\Psi_2(n,\epsilon),\dots$ be a sequence of independent  copies of $\Psi(n,\epsilon)$, and we define the random variable $N':=\text{min}\{j:\Psi_1(n,\epsilon)+\dots +\Psi_j(n,\epsilon)\geq 2n-j\}$.  Combining \eqref{continlset} and \eqref{stdombet}, we observe that \begin{align}\label{bndpuSRW}{\sf BRW}_T\Big(\underset{j\leq 2n}{\text{max}}|X_j|\leq n^{\gamma},\ B_n\leq\frac{n}{(\text{log}\ n)^{2-\delta}}\Big)&\leq{\bf P}\Big(N'\leq\frac{n}{(\text{log}\ n)^{2-\delta}}\Big) \\ &\leq{\bf P}\Bigg(\sum_{j\leq n/(\text{log}\ n)^{2-\delta}}\Psi_j(n,\epsilon)\geq 2n(1-\epsilon)\Bigg)\nonumber,\end{align}
where the second inequality follows from the fact that $2\epsilon n>\frac{n}{(\text{log}\ n)^{2-\delta}}$ for $n$ sufficiently large.  Now using the fact (shown on pg. 16 of \cite{GP}) that for $\delta,\epsilon$ between $0$ and $1$, we have $$\underset{n\to\infty}{\text{limsup}}\frac{(\text{log}\ n)^2}{n}\text{log}\Bigg[{\bf P}\bigg(\sum_{j\leq n/(\text{log}\ n)^{2-\delta}}\Psi_j(n,\epsilon)\geq 2n(1-\epsilon)\bigg)\Bigg]\leq -\frac{(1-\epsilon)^3|\pi\text{log}\ m|^2}{(1+\epsilon)^2\gamma^2},$$ and taking $\epsilon\to 0$, we see that the inequality in \eqref{lastinned} holds for the tree $T$ (and any $0<\delta<1$), thus completing the proof of \eqref{bndonmaxR}.

To establish \eqref{ubndonmaxd}, we first observe that it follows from Lemma \ref{lemma:compSRWm} that for any infinite rooted tree $T$ we have \begin{equation}\label{bndonmaxpiu32}{\sf SRW}_T\Big(\underset{j\leq 2n}{\text{max}}|X_j|\geq\frac{n}{(\text{log}\ n)^{\beta}},\ X_{2n}={\bf 0}\Big)\leq M^n{\bf E}_{{\sf BRW}_T}[c_2^{B_n}1_{H}]\leq M^n{\sf BRW}_T(H)\end{equation}(where $H$ represents the event $\{\underset{j\leq 2n}{\text{max}}|X_j|\geq\frac{n}{(\text{log}\ n)^{\beta}}\}\cap\{X_{2n}={\bf 0}\}$).  Defining $V_n:=\text{min}\{j:|X_j|\geq \frac{n}{(\text{log}\ n)^{\beta}}\}$, we then see that \begin{align}\label{lngstrgineqBRWth}{\sf BRW}_T(H)&=\sum_{j=\lceil{n/(\text{log}\ n)^{\beta}\rceil}}^{2n-\lceil{n/(\text{log}\ n)^{\beta}\rceil}}{\sf BRW}_T(V_n=j)\cdot {\sf BRW}_T(X_{2n}={\bf 0}\ |\ V_n=j)\\&\leq\sum_{j=\lceil{n/(\text{log}\ n)^{\beta}\rceil}}^{2n-\lceil{n/(\text{log}\ n)^{\beta}\rceil}}{\sf BRW}_T(V_n=j)\cdot{\sf SRW}_{\mathbb{Z}}\Big(\text{min}\{i:X_i=\lceil{n/(\text{log}\ n)^{\beta}\rceil}\}\leq 2n-j\Big)\nonumber\\&\leq\sum_{j=\lceil{n/(\text{log}\ n)^{\beta}\rceil}}^{2n-\lceil{n/(\text{log}\ n)^{\beta}\rceil}}{\sf BRW}_T(V_n=j)\cdot{\sf SRW}_{\mathbb{Z}}\Big(\text{min}\{i:X_i=\lceil{n/(\text{log}\ n)^{\beta}\rceil}\}\leq 2n\Big)\nonumber\\&\leq{\sf SRW}_{\mathbb{Z}}\Big(\text{min}\{i:X_i=\lceil{n/(\text{log}\ n)^{\beta}\rceil}\}\leq 2n\Big)\leq 2\cdot{\sf SRW}_{\mathbb{Z}}\Big(X_{2n}\geq\lceil{n/(\text{log}\ n)^{\beta}\rceil}\Big).\nonumber\end{align}Noting, as we did in the proof of Theorem \ref{brbnd0c}, that for $\lambda>0$ sufficiently small we have ${\bf E}_{{\sf SRW}_{\mathbb{Z}}}[e^{\lambda X_j}]\leq e^{\lambda^2 j}$, it then follows from setting $\lambda=\frac{1}{\text{log}\ n}$ and applying Markov's inequality, that ${\sf SRW}_{\mathbb{Z}}\Big(X_{2n}\geq\lceil{n/(\text{log}\ n)^{\beta}\rceil}\Big)\leq e^{2n/(\text{log}\ n)^2-2n/(\text{log}\ n)^{1+\beta}}$, which combined with \eqref{bndonmaxpiu32} and \eqref{lngstrgineqBRWth}, implies that \begin{equation}\label{ubndSRWtmx2n0}{\sf SRW}_T\Big(\underset{j\leq 2n}{\text{max}}|X_j|\geq\frac{n}{(\text{log}\ n)^{\beta}},\ X_{2n}={\bf 0}\Big)\leq 2M^n e^{2n/(\text{log}\ n)^2-2n/(\text{log}\ n)^{1+\beta}}.\end{equation}Now noting that if we take $\gamma\uparrow 1$ in \eqref{preqforpr} then we find that $${\sf SRW}_{{\bf T}}(X_{2n}={\bf 0})\geq M^n e^{-\big(1+o(1)\big)(\pi\text{log}\ m)^2 n/(\text{log}\ n)^2}\ \ {\sf GW}-\text{a.s.},$$it follows from combining this with \eqref{ubndSRWtmx2n0} that $$\underset{n\to\infty}{\text{lim}}{\sf SRW}_{{\bf T}}\Big(\underset{j\leq 2n}{\text{max}}|X_j|\geq\frac{n}{(\text{log}\ n)^{\beta}}\ \Big|\ X_{2n}={\bf 0}\Big)=\underset{n\to\infty}{\text{lim}}\frac{{\sf SRW}_{{\bf T}}\Big(\underset{j\leq 2n}{\text{max}}|X_j|\geq\frac{n}{(\text{log}\ n)^{\beta}},\ X_{2n}={\bf 0}\Big)}{{\sf SRW}_{{\bf T}}\Big(X_{2n}={\bf 0}\Big)}=0\ \ {\sf GW}-\text{a.s.},$$thus establishing \eqref{ubndonmaxd} and completing the proof of the theorem.
\end{proof}

\section*{Acknowledgements}

The author would like to thank Asaf Nachmias for helpful conversations and Marcus Michelen for providing a number of useful suggestions.

\end{document}